\newtheorem{theorem}{Theorem}[section]
\newtheorem{lemma}[theorem]{Lemma}
\newtheorem{defn}{Definition}[section]
\newtheorem{prop}{Proposition}[section]
\newtheorem{rem}[theorem]{Remark}
\newtheorem{ex}[theorem]{Example}
\numberwithin{equation}{section}
\title{Existence Results for double phase problem in Sobolev-Orlicz spaces with variable exponents in Complete Manifold}
\author{Ahmed Aberqi$^{1}$, Jaouad Bennouna$^{2}$, Omar Benslimane$^{2}$,\\ Maria Alessandra Ragusa$^{3}$\\

$^{1}${\small Laboratory LAMA, Sidi Mohamed Ben Abdellah University, National School of Applied Sciences, Fez, Morocco,}\\

$^{2}${\small Laboratory LAMA, Department of Mathematics, Sidi Mohamed Ben Abdellah University, Faculty of Sciences Dhar El Mahraz, B.P 1796 Atlas Fez, Morocco,}\\

$^{3}${\small Dipartimento di Matematica e Informatica, Università di Catania, Catania, Italy\protect\\ RUDN University, 6 Miklukho-Maklay St, 117198, Moscow, Russia}
}
\begin{document}
\maketitle



%
%




\begin{abstract}
In this paper, we study the existence of non-negative non-trivial solutions for a class of double-phase problems where the source term is a Caratheodory function that satisfies the Ambrosetti-Rabinowitz type condition in the framework of Sobolev-Orlicz spaces with variable exponents in complete compact Riemannian n-manifolds. Our approach is based on the Nehari manifold and some variational techniques. Further-more, the Hölder inequality, continuous and compact embedding results are proved.
\end{abstract}

\bigskip
\bigskip
\noindent {\textit{Mathematics Subject Classification(2000):}\thinspace
\thinspace 35J20, 35J47, 35J60.} \newline
Key words: Existence solutions; Double phase problem; Sobolev-Orlicz Riemannian manifold; Nehari manifold.


\section[This is an A Head]{Introduction}
Let $ ( M,\,g )$ be a smooth, complete compact Riemannian n-manifold. In this paper, we focused on the existence of non-trivial solutions of the following double phase problem
$$ ( \mathcal{P} ) \begin{cases}
\, -\mbox{div}  (\, | \, \nabla u ( x )\, |^{p( x ) -2}  \nabla u + \mu ( x ) \, |\, \nabla u ( x ) \,|^{q( x ) - 2} \, \nabla u )\\ \hspace*{0.3cm} =  \lambda |\, u( x ) \, |^{q( x ) - 2} \, u( x ) - |\, u( x )\,|^{p( x ) - 2} \, u( x ) + f ( x,  u( x ) ) & \text{ in $ M$ }, \\[0.3cm]
\, u \,= \, 0 & \text{ on $ \partial M $ },
\end{cases} $$
where $-  \Delta_{p( x )} u( x ) = - \,div\,( \,| \, \nabla u( x ) \, |^{p( x ) - 2} \,.\, \nabla u( x ) ),$ $-  \Delta_{q( x )} u( x ) = \\- \,div\,( \,| \, \nabla u( x ) \, |^{q( x ) - 2} \,.\, \nabla u( x ) ) $ are the $ p( x ) $-laplacian and $ q( x ) $-laplacian in $( M, g )$ respectively, $ \lambda > 0$ is a parameter specified later, the function $ \mu : \overline{M} \rightarrow \mathbb{R}^{+}_{*}$ is supposed to be Lipschitz continuous, and the variables exponents $ p, \, q \in C( \overline{M} )$  satisfy the assumption \eqref{1.1} in section \ref{section3}. \\\
The perturbation $f ( x, u )$ is a Caratheodory function which satisfies the Ambrosetti-Rabinowitz type condition:\\
$ ( f_{1} ): $ There exists $ \beta > p^{+} $ and some $A > 0$ such as for each $ |\, \alpha\,| > A$ we have
$$ 0 < \int_{M} F( x, \alpha ) \,\,dv_{g} ( x ) \leq \int_{M} f( x, \alpha ) \,.\, \frac{\alpha}{\beta} \,\, dv_{g} ( x ) \,\, \mbox{a.e} \,\,x \in M, $$
where $ \displaystyle F( x, \alpha ) = \int_{0}^{\alpha} f( x, t ) \,\, dt $ being the primitive of $ f( x, \alpha )$ and $dv_{g} = \sqrt{\mbox{det} ( g_{ij} )} \,\, dx $ is the Riemannian volume element on $( M, g ),$ where the $ g_{ij}$ are the components of the Riemannian metric $g$ in the chart and $dx$ is the Lebesgue volume element of $\mathbb{R}^{N}.$\\
$ ( f_{2} ): \, f( x, 0 ) = 0. $\\
And\\
$ ( f_{3} ): \, \displaystyle \lim_{| \, \alpha \, | \rightarrow 0} \frac{f( x, \alpha )}{| \, \alpha \, |^{q( x ) - 1}} = 0 $ uniformly a.e $ x \in M $.\\

Up to this day, several contributions have been devoted to study double phase problems. This kind of operator was introduced, first, by Zhikov in his relevant paper \cite{zhikov1987averaging} in order to describe models with strongly anisotropic materials by studying the functional $$ u \longmapsto \int_{\Omega} ( |\, \nabla u\,|^{p} + \mu ( x ) \, |\, \nabla u\,|^{q} ) \,\, dx, $$ where $ 1 < p < q < N$ and with a nonnegative weight function $\mu \in L^{\infty} ( \Omega ),$ see also the works of Zhikov \cite{zhikov1995lavrentiev, zhikov1997some} and the monograph of Zhikov-Kozlov-Oleinik \cite{jikov2012homogenization}. Indeed, we can easily see that the previous function reduces to p-laplacian if $\mu ( x ) = 0$ or to the weighted laplacian $( p( x ), \, q( x ) )$ if $ \displaystyle\inf_{x \in \overline{M}} \mu ( x ) > 0,$ respectively.\\
In the case of single valued equations, Liu-Dai in \cite{liu2018existence} discussed double phase problems and proved the existence and the multiplicity of the results, with the sign-changing solutions by variational method. A similar treatment has been recently done by Gasi{\'n}ski and Papageorgiou in \cite{gasinski2019constant} via the Nehari manifold method. Following this direction, Papageorgiou, N. S. and Repov{\v{s}}, D. D. and Vetro, C. in \cite{papageorgiou2020positive} studied the existence of positive solutions for a class of double phase Dirichlet equations which has the combined effects of a singular term and of a parametric super-linear term. In particular, in \cite{tachikawa2020boundary} the author provides the Hölder continuity up to the boundary of minimizers of so-called double phase functional with variable exponents, under suitable Dirichlet boundary conditions. For more details, we refer the reader to \cite{aubin1982nonlinear, cencelj2018double, gaczkowski2016sobolev, hebey2000nonlinear, guo2015dirichlet, ragusa2019regularity, marino2020existence, shi2020multiple} and the references therein.

Also, there are many articles on nonstandard growth problems, especially on $p( x )$-growth and double phase problems. About $p( x )$-growth problems, see  \cite{aberqi2019existence, aberqi2017nonlinear, benslimane2020existence, A, benslimane2021some, benslimane2020existence1, boccardo1993nonlinear, boccardo1992nonlinear, fan2001compact, fan2001spaces, ragusa2005partial} and the references given there.\\

Studying this type of problems is both significant and relevant. In the one hand, we have the physical motivation; since the double phase operator has been used to model the steady-state solutions of reaction-diffusion problems, that arise in biophysic, plasma-physic and in the study of chemical reactions. In the other hand, these operators provide a useful paradigm for describing the behaviour of strongly anisotropic materials, whose hardening properties are linked to the exponent governing the growth of the gradient change radically with the point, where the coefficient $\mu ( . )$ determines the geometry of a composite made of two different materials. \\

Motivated by the aforementioned works, the aim of this paper, is to prove the existence of non-negative non-trivial solutions of the problem $( \mathcal{P} )$ where the perturbation $f( x, u )$ is a Caratheodory function, that satisfies the Ambrosetti-Rabinowitz type condition. To the best of our knowledge, the existence result for double-phase problems $( \mathcal{P} )$ in the framework of Sobolev-Orlicz spaces with variable exponents in complete manifold has not been considered in the literature. The present paper is the first study devoted to this type of problem in the setting of Sobolev-Orlicz spaces with variable exponents in a complete manifold.\\

We would like to draw attention to the fact that the $p ( x )-$laplacian operator has more complicated non-linearity than the p-laplacian operator. For example, they are non-homogeneous. Thus, we cannot use the Lagrange Multiplier Theorem in many problems involving this operators, which prove that our problem is more difficult than the operators p-Laplacian type.\\

The paper is organized as follows. In section \ref{section 2}, we recall the most important and relevant properties and notations of Lebesgue spaces with variable exponents and Sobolev-Orlicz spaces with variable exponents in complete manifold. Moreover, we show two new results: the first one, the Hölder inequality and the second one, the embedding result of these spaces into Lebesgue space with variable exponent. In section \ref{section3}, we introduce the Nehari manifold associated with $( \mathcal{P} )$ and we study three parts, corresponding to local minima, local maxima and the points of inflection. Finally, in section \ref{section4}, we demonstrate the existence of two non-negative non-trivial solutions of the problem $( \mathcal{P} ).$

\section{Mathematical background and auxiliary results}\label{section 2}
In this section, we recall the most important and relevant properties and notations about Sobolev spaces with variable exponents and Sobolev spaces with variable exponents on manifolds, and we prove some properties, that we will need in our analysis of the problem $ ( \mathcal{P} )$, by that, referring to \cite{aubin1982nonlinear, hebey2000nonlinear, fan2001spaces, radulescu2015partial, benslimane2020existence1, gaczkowski2016sobolev} for more details.
\subsection{Sobolev spaces with variable exponents}
Let $ \Omega$ be a bounded open subset of $ \mathbb{R}^{N} \, ( N \geq 2 )$, we define the Lebesgue space with variable exponent $ L^{q(\cdot)} ( \Omega )$ as the set of all measurable function $ u : \Omega \longmapsto \mathbb{R} $ for which the convex modular
$$ \rho _{q(\cdot)} ( u ) = \int_{\Omega} |\, u( x )\,|^{q( x )} \,\, dx, $$ is finite. If the exponent is bounded, i.e if $ q^{+} = ess \,sup \{ \, q( x ) / x \in \Omega \, \} < + \infty,$ then the expression $$ ||\, u \,||_{q(\cdot)} = \inf \{ \, \lambda > 0: \, \rho_{q(\cdot)} \bigg( \frac{u}{\lambda} \bigg) \leq 1 \, \}, $$ defines a norm in $ L^{q(\cdot)} ( \Omega )$, called the Luxemburg norm. The space $ ( L^{q(\cdot)} ( \Omega ), \, ||\,.\,||_{q(\cdot)} )$ is a separable Banach space. Moreover, if $ 1 < q^{-} \leq q^{+} < +\infty,$ then $ L^{q(\cdot)} ( \Omega )$ is uniformly convex, where $ q^{-} = ess\, inf \{ \, q( x ) / x \in \Omega \, \},$ hence reflexive, and its dual space is isomorphic to $ L^{q^{'}(\cdot)} ( \Omega )$ where $ \frac{1}{q( x )} + \frac{1}{q^{'} ( x )} = 1.$\\
Finally, we have the Hölder type inequality:
$$ \bigg|\, \int_{\Omega} u\, v \,\, dx \, \bigg| \leq \bigg( \, \frac{1}{q^{-}} + \frac{1}{( q^{'} )^{-}} \bigg) \, ||\, u\,||_{q(\cdot)} ||\, v\,||_{q^{'} (\cdot)},$$ for all $ u \in L^{q(\cdot)} ( \Omega )$ and $ v \in L^{q^{'}(\cdot)} ( \Omega )$.\\
Now, we define the variable exponent Sobolev space by $$ W^{1, q( \cdot )} ( \Omega ) = \{ u \in L^{q( \cdot )} ( \Omega ) \,\, \mbox{and} \,\, | \nabla u | \in L^{q( \cdot )} (\Omega ) \}.$$ which is a Banach space equipped with the following norm $$ || u ||_{1, q( \cdot )} = || u ||_{q( \cdot )} + || \nabla u ||_{q(\cdot)} \hspace*{1cm} \forall u \in W^{1, q(\cdot )} ( \Omega ).$$
The space $( W^{1, q( \cdot )} ( \Omega ), || \cdot ||_{1, q( \cdot )} )$ is separable and reflexive Banach space.\\
We denote by $W_{0}^{1, q(\cdot)} (\Omega)$ the closure $C_{0}^{\infty} ( \Omega )$ in $ W^{1, q( \cdot )} ( \Omega ).$
\begin{prop} \cite{fan2001compact} ( Poincaré inequality )
If $ q \in C_{+} ( \overline{\Omega} ),$ then there is a constant $c > 0$ such that
$$ ||\, u\,||_{q( x )} \leq c \, || \, |\, \nabla u\,|\, ||_{q( x )}, \hspace*{0.5cm} \forall u \in W_{0}^{1, \, q( x )} ( \Omega ).$$
Where, $ \displaystyle C_{+} ( \overline{\Omega} ) = \{ q/ q \in C( \overline{\Omega} ), \, q( x ) > 1 \,\, \mbox{for} \,\, x \in \overline{\Omega} \}.$ \\
Consequently, $ ||\, u\,|| = || \, |\, \nabla u \,|\, ||_{q( x )}$ and $||\, u\,||_{1,\, q( x )}$ are equivalent norms on $W_{0}^{1, q( x )} ( \Omega ).$
\end{prop}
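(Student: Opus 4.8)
I would not attempt a direct computation (slicing $\Omega$ by lines and integrating $|\nabla u|$), because with a merely continuous — not log‑Hölder — exponent the passage between the modular $\int_\Omega|u|^{q(x)}dx$ and the Luxemburg norm is too lossy to close such an estimate. Instead I would deduce the inequality from the compact embedding of $W_0^{1,q(x)}(\Omega)$ into $L^{q(x)}(\Omega)$, which holds on a bounded domain for $q\in C_{+}(\overline{\Omega})$ (this is the content of \cite{fan2001compact}, the reference to which the proposition is anyway attributed), by the standard compactness–contradiction scheme. First I would argue by contradiction: if the inequality fails, then for each $n\in\mathbb N$ there is $u_n\in W_0^{1,q(x)}(\Omega)$ with $\|u_n\|_{q(x)}>n\,\bigl\|\,|\nabla u_n|\,\bigr\|_{q(x)}$; since both sides are positively homogeneous of degree one I may rescale so that $\|u_n\|_{q(x)}=1$, whence $\bigl\|\,|\nabla u_n|\,\bigr\|_{q(x)}<1/n\to 0$ and, in particular, $(u_n)$ is bounded in $W_0^{1,q(x)}(\Omega)$.

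Next, since $W^{1,q(x)}(\Omega)$ is reflexive and $W_0^{1,q(x)}(\Omega)$ is a closed subspace, a subsequence (still denoted $u_n$) converges weakly in $W_0^{1,q(x)}(\Omega)$ to some $u$. The gradient map $v\mapsto\nabla v$ is bounded and linear from $W_0^{1,q(x)}(\Omega)$ into $\bigl(L^{q(x)}(\Omega)\bigr)^{N}$, hence weakly continuous, so $\nabla u_n\rightharpoonup\nabla u$, and weak lower semicontinuity of the norm gives $\bigl\|\,|\nabla u|\,\bigr\|_{q(x)}\le\liminf_n\bigl\|\,|\nabla u_n|\,\bigr\|_{q(x)}=0$, i.e. $\nabla u=0$ a.e. in $\Omega$. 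On the other hand, by the compact embedding $W_0^{1,q(x)}(\Omega)\hookrightarrow\hookrightarrow L^{q(x)}(\Omega)$ the same subsequence converges strongly in $L^{q(x)}(\Omega)$, so $\|u\|_{q(x)}=\lim_n\|u_n\|_{q(x)}=1$; in particular $u\not\equiv 0$.

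Then I would close the contradiction: a function in $W_0^{1,q(x)}(\Omega)$ with vanishing gradient is constant on each connected component, and the only such constant lying in $W_0^{1,q(x)}(\Omega)$ is $0$ — otherwise, extending by zero to a ball $B\supset\overline{\Omega}$ would produce a non-constant element of $W^{1,q(x)}(B)$ with zero gradient on the connected set $B$. Hence $u=0$, contradicting $\|u\|_{q(x)}=1$, and the Poincaré inequality holds with some $c=c(\Omega,q)>0$. The equivalence of the norms follows at once: for every $u\in W_0^{1,q(x)}(\Omega)$ one has $\bigl\|\,|\nabla u|\,\bigr\|_{q(x)}\le\|u\|_{1,q(x)}=\|u\|_{q(x)}+\bigl\|\,|\nabla u|\,\bigr\|_{q(x)}\le(c+1)\,\bigl\|\,|\nabla u|\,\bigr\|_{q(x)}$.

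The main obstacle is entirely concentrated in the compact embedding $W_0^{1,q(x)}(\Omega)\hookrightarrow\hookrightarrow L^{q(x)}(\Omega)$ itself: strong convergence must take place in $L^{q(x)}(\Omega)$ and not merely in $L^{q^-}(\Omega)$ — the latter would follow cheaply from the classical Rellich–Kondrachov theorem applied with the constant exponent $q^-$, but it would not be enough to guarantee $\|u\|_{q(x)}=1$ for the weak limit — and it is exactly at this point that the continuity of the exponent is used, typically through a finite covering of $\overline{\Omega}$ by sets on which the oscillation of $q$ is small, combined with the constant‑exponent embedding on each piece. I would simply invoke this embedding from \cite{fan2001compact}; the rest of the argument is routine functional analysis.
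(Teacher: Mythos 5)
Your argument is correct, but there is nothing in the paper to compare it against: the proposition is stated with a citation to Fan--Zhao--Zhao and the paper supplies no proof at all, so you have effectively reconstructed the proof that lives in the cited reference. Your compactness--contradiction scheme is the standard one and every step closes: $q\in C_{+}(\overline{\Omega})$ on the compact set $\overline{\Omega}$ forces $1<q^{-}\le q^{+}<\infty$, so $W^{1,q(x)}(\Omega)$ is reflexive and the normalized sequence has a weak limit; in fact, since $\bigl\|\,|\nabla u_n|\,\bigr\|_{q(x)}<1/n$ you even have \emph{strong} convergence $\nabla u_n\to 0$ in $\bigl(L^{q(x)}(\Omega)\bigr)^{N}$, so the appeal to weak lower semicontinuity can be dispensed with. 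Your identification of the compact embedding $W_0^{1,q(x)}(\Omega)\hookrightarrow\hookrightarrow L^{q(x)}(\Omega)$ as the only nontrivial input is exactly right, and it is legitimately available here: because you work in $W_0^{1,q(x)}$ the zero-extension to a ball removes any need for boundary regularity of $\Omega$, and the finite-covering/small-oscillation argument you sketch (reducing locally to the constant-exponent Rellich--Kondrachov theorem) is precisely how the cited reference proves it; mere continuity of $q$ suffices for the target space $L^{q(x)}$, with log-H\"older continuity only needed for the sharper target $L^{q^{*}(x)}$. The zero-extension argument ruling out nonzero locally constant functions in $W_0^{1,q(x)}(\Omega)$ and the final derivation of norm equivalence are also fine. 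The one thing I would make explicit is that the constant produced this way depends on $\Omega$ and $q$ but is otherwise non-effective, which is consistent with how the paper later uses $c$ only as a fixed structural constant.
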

\subsection{Sobolev spaces with variable exponents on manifolds}
In the following, all the manifolds we consider are smooth, and we will use the following conditions on $( M, g )$, depending on the context:
\begin{defn}
Let $ ( M, g ) $ be a smooth Riemannain n-manifolds and let $ \nabla $ be the Levi-Civita connection. If $u$ is a smooth function on $M$, then $ \nabla^{k} u $ denotes the $k-$th covariant derivative of $u$, and $ | \, \nabla^{k} u \, | $ the norm of $ \nabla^{k} u $ defined in local coordinates by
$$ | \, \nabla^{k} u \, |^{2} = g^{i_{1} j_{1}} \cdots g^{i_{k} j_{k}} \, ( \nabla^{k} u )_{i_{1} \cdots i_{k}} \, ( \nabla^{k} u )_{j_{1} \cdots j_{k}} $$
where Einstein's convention is used.
\end{defn}
\begin{rem}
A smooth manifold $M$ of dimension $n$ is a connected topological manifold $M$ of dimension $n$ together with a $C^{\infty} -$complete atlas.
\end{rem}
\begin{ex}
The following examples are classical examples of smooth manifolds:
$\bullet \hspace*{0.4cm}$ The Euclidean space $\mathbb{R}^{n}$ itself.\\
$\bullet \hspace*{0.4cm}$ The torus $T^{n}.$\\
$\bullet \hspace*{0.4cm}$ The unit sphere $S^{n}$ of $ \mathbb{R}^{n + 1}.$\\
$\bullet \hspace*{0.4cm}$ The real projective space $\mathbb{P}^{n} ( \mathbb{R} ).$
\end{ex}
\begin{defn}
To define variable Sobolev spaces, given a variable exponent $q$ in $ \mathcal{P} ( M ) $ ( the set of all measurable functions $p(\cdot) : M \rightarrow ( 1, \infty )$ ) and a natural number $k$, introduce
$$ C^{q(\cdot)}_{k} ( M ) = \{ \, u \in C^{\infty} ( M ) \,\, \mbox{such that } \,\, \forall j \,\, 0 \leq j \leq k \,\, | \, \nabla^{j} u \, | \in L^{q( \cdot ) } ( M )\, \}. $$
On $ C^{q( \cdot )}_{k} ( M ) $ define the norm
$$ || \, u \, ||_{L^{q( \cdot )}_{k} ( M )} = \sum_{j = 0}^{k} || \, \nabla^{j} u \, ||_{L^{q( \cdot )} ( M )}. $$
\end{defn}
\begin{defn}
The Sobolev spaces $ L_{k}^{q( \cdot )} ( M ) $ is the completion of $ C^{q(\cdot)}_{k} ( M ) $ with respect to the norm $ || \, u \, ||_{L^{q( \cdot )}_{k}}$. If $\Omega$ is a subset of $M,$ then $L_{k, 0}^{q( \cdot )} ( \Omega )$ is the completion of $C_{k}^{q( \cdot )} ( M ) \cap C_{0} ( \Omega )$ with respect to $ || . ||_{L^{q( \cdot )}_{k}},$ where $C_{0} ( \Omega )$ denotes the vector space of continuous functions whose support is a compact subset of $\Omega.$
\end{defn}
\begin{defn}
Given $ ( M, g ) $ a smooth Riemannian manifold, and $ \gamma : \, [\, a, \, b \, ] \longrightarrow M $ is a curve of class $ C^{1} $. The length of $ \gamma $ is
$$ l( \gamma ) = \int_{a}^{b} \sqrt{g \, ( \, \frac{d \gamma }{d t }, \, \frac{d \gamma}{d t}\, )} \,\,dt, $$
and for a pair of points $ x, \, y \in M$, we define the distance $ d_{g} ( x, y ) $ between $x$ and $y$ by
$$ d_{g} ( x, y ) = \inf \, \{ \, l( \gamma ) : \, \gamma: \, [ \, a, \, b \,] \rightarrow M \,\, \mbox{such that} \,\, \gamma ( a ) = x \,\, \mbox{and} \,\, \gamma ( b ) = y \, \}. $$
\end{defn}
\begin{defn}
A function $ s: \, M \longrightarrow \mathbb{R} $ is log-Hölder continuous if there exists a constant $c$ such that for every pair of points $ \{ x, \, y \} $ in $ M$ we have
$$ | \, s( x ) - s( y ) \, | \leq \frac{c}{log ( e + \frac{1}{d_{g} ( x, y )} \, )}. $$
We note by $ \mathcal{P}^{log} ( M ) $ the set of log-Hölder continuous variable exponents. The relation between $ \mathcal{P}^{log} ( M ) $ and $ \mathcal{P}^{log} ( \mathbb{R}^{N} ) $ is the following:
\end{defn}
\begin{prop} \cite{aubin1982nonlinear, gaczkowski2016sobolev}
Let $ q \in \mathcal{P}^{log} ( M ) $, and let $ ( \Omega, \phi ) $ be a chart such that
$$ \frac{1}{2} \delta_{i j } \leq g_{i j} \leq 2 \, \delta_{i j } $$
as bilinear forms, where $ \delta_{i j} $ is the delta Kronecker symbol. Then $ qo\phi^{-1} \in \mathcal{P}^{log} ( \phi ( \Omega ) ).$
\end{prop}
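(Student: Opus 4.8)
The plan is to transfer the log-Hölder estimate from $(M,d_g)$ to $(\phi(\Omega),|\cdot|)$ by comparing the two distances on the chart. The bilinear-form bound $\tfrac12\delta_{ij}\le g_{ij}\le 2\delta_{ij}$ says exactly that for every tangent vector $v=(v^i)$ in the chart one has $\tfrac{1}{\sqrt2}\,|v|\le\sqrt{g_{ij}v^iv^j}\le\sqrt2\,|v|$, where $|\cdot|$ denotes the Euclidean norm. Hence, for any $C^1$ curve $\gamma$ whose image lies in $\Omega$, writing $c=\phi\circ\gamma$ for its coordinate representative, the Riemannian and Euclidean lengths obey $\tfrac{1}{\sqrt2}\,\ell_{\mathrm{eucl}}(c)\le \ell_g(\gamma)\le\sqrt2\,\ell_{\mathrm{eucl}}(c)$. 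I would record this two-sided comparison of lengths as a first step, directly from the definition of $\ell(\gamma)$.

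Next I would use it to control the intrinsic distance. After shrinking $\Omega$ if necessary, we may assume $\phi(\Omega)$ is convex (e.g.\ a Euclidean ball, which is the typical situation in which such a metric bound holds), so that for $x,y\in\Omega$ the straight segment from $\phi(x)$ to $\phi(y)$ lies in $\phi(\Omega)$. Pulling this segment back by $\phi^{-1}$ produces an admissible curve from $x$ to $y$, and the length comparison gives $d_g(x,y)\le\sqrt2\,|\phi(x)-\phi(y)|$. (Alternatively, without shrinking: when the chart coordinates of $x,y$ are not close, $|\phi(x)-\phi(y)|$ stays bounded below, and since $M$ is compact and $q$ log-Hölder hence continuous and bounded, the desired estimate for $q\circ\phi^{-1}$ is then trivially satisfied with a large enough constant.)

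Then I would conclude by a change of variables. Given $\xi,\eta\in\phi(\Omega)$, put $x=\phi^{-1}(\xi)$, $y=\phi^{-1}(\eta)$. Since $t\mapsto\log(e+1/t)$ is decreasing and $d_g(x,y)\le\sqrt2\,|\xi-\eta|$,
\[
|q\circ\phi^{-1}(\xi)-q\circ\phi^{-1}(\eta)|=|q(x)-q(y)|
\le\frac{c}{\log\big(e+\tfrac{1}{d_g(x,y)}\big)}
\le\frac{c}{\log\big(e+\tfrac{1}{\sqrt2\,|\xi-\eta|}\big)}.
\]
It remains to absorb the factor $\sqrt2$: an elementary argument (using $\log(e+1/t)\sim\log(1/t)$ as $t\to0^+$, the finite limit at $t\to\infty$, continuity on $(0,\infty)$, and the fact that $|\xi-\eta|$ is bounded above on $\phi(\Omega)$) shows $K:=\sup_{t>0}\frac{\log(e+1/t)}{\log(e+1/(\sqrt2\,t))}<\infty$, whence the right-hand side is at most $\frac{cK}{\log(e+1/|\xi-\eta|)}$. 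As $q\circ\phi^{-1}$ still takes values in $(1,\infty)$, this yields $q\circ\phi^{-1}\in\mathcal{P}^{log}(\phi(\Omega))$ with constant $cK$.

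The only genuinely delicate point is the geometric one in the second step: $d_g$ is defined as a global infimum over curves in $M$, whereas we want a comparison with the flat chart distance. The two-sided metric bound makes this essentially automatic once one is permitted to use the pulled-back Euclidean segment as a competitor, so the part of the argument that needs care is precisely the reduction to a convex chart image (or, equivalently, the separate treatment of "nearby" and "far" pairs of points).
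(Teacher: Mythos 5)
The paper does not actually prove this proposition: it is stated as a citation to \cite{aubin1982nonlinear, gaczkowski2016sobolev} with no argument supplied, so there is no in-paper proof to compare against. Your argument is the standard one behind the cited result and it is essentially correct: the bilinear bound $\tfrac12\delta_{ij}\le g_{ij}\le 2\delta_{ij}$ gives the two-sided length comparison $\tfrac{1}{\sqrt2}\,\ell_{\mathrm{eucl}}(\phi\circ\gamma)\le \ell_g(\gamma)\le \sqrt2\,\ell_{\mathrm{eucl}}(\phi\circ\gamma)$ for curves in $\Omega$; only the upper bound $d_g(x,y)\le\sqrt2\,|\phi(x)-\phi(y)|$ is needed, and it follows by taking the pulled-back Euclidean segment as a competitor in the infimum defining $d_g$; the monotonicity of $t\mapsto\log(e+1/t)$ and the finiteness of $K=\sup_{t>0}\log(e+1/t)/\log\bigl(e+1/(\sqrt2\,t)\bigr)$ (the ratio tends to $1$ at both ends of $(0,\infty)$ and is continuous in between) then transfer the log-H\"older modulus with constant $cK$. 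You are also right to single out the admissibility of the segment as the one genuinely delicate point: for a completely arbitrary chart image the statement can fail for pairs that are Euclidean-close but intrinsically far, so either the reduction to a convex (e.g.\ ball) chart image or the near/far dichotomy is indispensable; your fallback for far pairs uses that $q$ is bounded, which does hold here because log-H\"older continuity implies continuity and $M$ is compact. In short, your proof supplies a complete argument where the paper supplies only a reference, and the only caveat is that the convexity (or geodesic-ball) normalization of the chart should be stated as a hypothesis rather than as an afterthought, since it is exactly what makes the distance comparison valid.
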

\begin{prop} ( Hölder's inequality )
For all $ u \in L^{q( \cdot )} ( M )$ and $ v \in  L^{q'( \cdot )} ( M )  $ we have $$ \int_{M} | \, u( x ) \, v( x ) \, | \,\, dv_{g} ( x ) \leq r_{q} \, || \, u \,||_{L^{q( \cdot )} ( M )} \,.\, || \, v \, ||_{L^{q'( \cdot )} ( M )}.$$
Where $r_{q}$ be a positive constant depend to $q^{-}$ and $q^{+}.$
\end{prop}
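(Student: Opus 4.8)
The plan is to reduce the statement to the pointwise Young inequality with a variable exponent, exactly as in the Euclidean case recalled above, using only that $dv_{g}$ is a positive (and, since $M$ is compact, finite) Borel measure on $M$ and that the Luxemburg norm on $L^{q(\cdot)}(M)$ is built from the modular $\rho_{q(\cdot)}(u)=\int_{M}|u(x)|^{q(x)}\,dv_{g}(x)$. If either $\|u\|_{L^{q(\cdot)}(M)}$ or $\|v\|_{L^{q'(\cdot)}(M)}$ vanishes the inequality is trivial, so I may assume both are positive and finite; I set $\lambda=\|u\|_{L^{q(\cdot)}(M)}$ and $\nu=\|v\|_{L^{q'(\cdot)}(M)}$.

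First I would record the scaling property of the Luxemburg norm on $(M,dv_{g})$: since $q^{+}<\infty$, one has $\rho_{q(\cdot)}\!\left(u/\lambda\right)\le 1$ and $\rho_{q'(\cdot)}\!\left(v/\nu\right)\le 1$, which follows from the definition of $\lambda,\nu$ as infima together with Fatou's lemma applied along minimizing sequences $\lambda_{n}\downarrow\lambda$, $\nu_{n}\downarrow\nu$, using $1<q^{-}\le q^{+}<\infty$. (If one prefers to work with the completion definition of the space, one first proves the inequality for $u\in C^{q(\cdot)}_{0}(M)$ and $v\in C^{q'(\cdot)}_{0}(M)$, and then passes to the limit by density and the continuity of both sides.) Next I would apply, for each $x\in M$ with $1<q(x)<\infty$ and all $s,t\ge 0$, the elementary inequality
\[
st\le \frac{s^{q(x)}}{q(x)}+\frac{t^{q'(x)}}{q'(x)},\qquad \frac{1}{q(x)}+\frac{1}{q'(x)}=1,
\]
with $s=|u(x)|/\lambda$ and $t=|v(x)|/\nu$. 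Integrating over $M$ against $dv_{g}$ and bounding $1/q(x)\le 1/q^{-}$, $1/q'(x)\le 1/(q')^{-}$ gives
\[
\frac{1}{\lambda\nu}\int_{M}|u(x)\,v(x)|\,dv_{g}(x)\le \frac{1}{q^{-}}\,\rho_{q(\cdot)}\!\left(\frac{u}{\lambda}\right)+\frac{1}{(q')^{-}}\,\rho_{q'(\cdot)}\!\left(\frac{v}{\nu}\right)\le \frac{1}{q^{-}}+\frac{1}{(q')^{-}},
\]
and multiplying through by $\lambda\nu$ yields the claim with $r_{q}=\dfrac{1}{q^{-}}+\dfrac{1}{(q')^{-}}=\dfrac{1}{q^{-}}+1-\dfrac{1}{q^{+}}$, a constant depending only on $q^{-}$ and $q^{+}$ since $(q')^{-}=q^{+}/(q^{+}-1)$.

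The only genuinely delicate point is the normalization step $\rho_{q(\cdot)}(u/\lambda)\le 1$: one has to make sure the norm on $L^{q(\cdot)}(M)$ is indeed governed by the modular $\int_{M}(\cdot)^{q(x)}\,dv_{g}$ and that this modular is lower semicontinuous along monotone sequences; once that is granted the argument is purely measure-theoretic and the Riemannian geometry enters only through $dv_{g}$. An alternative, if one wants to stay with the chart-based definition of $L^{q(\cdot)}(M)$, is to fix a finite atlas $(\Omega_{k},\phi_{k})$ with $\tfrac12\delta_{ij}\le g_{ij}\le 2\delta_{ij}$ and a subordinate partition of unity, apply the Euclidean Hölder inequality on each $\phi_{k}(\Omega_{k})$ — where $q\circ\phi_{k}^{-1}\in\mathcal{P}^{\log}$ by the previous proposition and $\sqrt{\det(g_{ij})}$ is pinched between two positive constants — and sum over $k$; the metric distortion and the cardinality of the atlas then only enter the final constant while preserving its dependence on $q^{-},q^{+}$. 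I would present the first, self-contained argument as the main proof and mention the second only as a remark.
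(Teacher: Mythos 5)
Your proposal is correct and follows essentially the same route as the paper's own proof: normalize by the Luxemburg norms, apply the pointwise Young inequality with exponents $q(x)$ and $q'(x)$, integrate, bound $1/q(x)$ and $1/q'(x)$ by $1/q^{-}$ and $1-1/q^{+}$, and use that the modulars of the normalized functions are at most $1$. Your explicit justification of the unit-ball property $\rho_{q(\cdot)}(u/\lambda)\le 1$ is a welcome extra detail that the paper delegates to a citation, but it does not change the argument.
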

\begin{proof}
Obviously, we can suppose that $|| u ||_{L^{q(x)} ( M )} \neq 0 $ and $|| v ||_{L^{q'(x)} ( M )} \neq 0, $ we have
$$ 1 < q( x ) < \infty, \,\, | u( x ) | < \infty, \,\, | v( x ) | < \infty \,\,\mbox{a.e} \,\, x \in M. $$
By young inequality, we have
\begin{align*}
\frac{u( x ) \,.\, v( x )}{|| u( x ) ||_{L^{q( x )} ( M )} \,.\,|| v( x ) ||_{L^{q'( x )} ( M )}} \leq & \, \frac{1}{q( x )} \, \bigg( \frac{| u( x ) |}{|| u( x ) ||_{L^{q( x )} ( M )}} \bigg)^{q( x )} \\&+ \frac{1}{q'( x )} \, \bigg( \frac{| v( x ) |}{|| v( x ) ||_{L^{q'( x )} ( M )}} \bigg)^{q'( x )}
\end{align*}
Integrating over $M$, we obtain
\begin{align*}
&\int_{M} \frac{|\,u( x ) \,.\, v( x )\,|}{|| u( x ) ||_{L^{q( x )} ( M )} \,.\,|| v( x ) ||_{L^{q'( x )} ( M )}} \,\, dv_{g} ( x ) \\ & \leq \frac{1}{q^{-}} \, \int_{M} \bigg( \frac{| u( x ) |}{|| u( x ) ||_{L^{q( x )} ( M )}} \bigg)^{q( x )} \,\, dv_{g} ( x ) \\& \hspace*{0.3cm} + \big( 1 - \frac{1}{q^{+}} \big) \, \int_{M} \bigg( \frac{| v( x ) |}{|| v( x ) ||_{L^{q'( x )} ( M )}} \bigg)^{q'( x )} \,\, dv_{g} ( x ) \\& \leq 1 + \frac{1}{q^{-}} - \frac{1}{q^{+}},
\end{align*}
then, using the same technique as in the proof of Theorem 1.15 in \cite{fan2001spaces}, we get that 
\begin{align*}
\int_{M} |\, u( x ) \,.\, v( x ) \, |\,\, dv_{g} ( x ) &\leq \big( 1 + \frac{1}{q^{-}} + \frac{1}{q^{+}} \big) \, || u( x ) ||_{L^{q( x )} ( M )} \,.\, || v( x ) ||_{L^{q'( x )} ( M )}  \\& \leq r_{q} || u( x ) ||_{L^{q( x )} ( M )} \,.\, || v( x ) ||_{L^{q'( x )} ( M )},
\end{align*}
Which complete the proof.
\end{proof}
\begin{rem}
If $a$ and $b$ are two positive functions on $M$, then by Hölder's inequality and \cite{fan2001compact, gaczkowski2016sobolev} we have
\begin{equation}\label{2.1}
\int_{q^{-} < 2} a^{\frac{q^{-}}{2}} \, b^{\frac{2q^{-} - {q^{-}}^{2}}{2}} \, \leq 2  \, || \, \mathbb{1}_{q^{-} < 2} \,\, a^{\frac{q^{-}}{2}} \, ||_{L^{\frac{2}{q^{-}}}} \, .\, || \, \mathbb{1}_{q^{-} < 2} \,\, b^{\frac{2q^{-} - {q^{-}}^{2}}{2}} \, ||_{L^{\frac{2}{ 2 - q^{-}}}}.
\end{equation}
where $ \mathbb{1} $ is the indicator function of $ M $, moreover, since
$$ ||\, \mathbb{1}_{q^{-} < 2} \,\,a^{\frac{2}{q^{-}}} \, ||_{L^{\frac{2}{q^{-}}}} \leq \max \{ \, \rho_{1} ( a ), \, \rho_{1} ( a )^{\frac{q^{-}}{2}} \, \} $$
and
$$ || \, \mathbb{1}_{q^{-} < 2} \,\, b^{\frac{2q^{-} - {q^{-}}^{2}}{2}} \, ||_{L^{\frac{2}{ 2 - q^{-}}}} \leq \max \{ \, \rho_{q( \cdot )} ( b )^{\frac{2 - q^{-}}{2}}, \, 1 \, \},$$
we get,
\begin{equation}\label{2.2}
\int_{q^{-} < 2} a^{\frac{q^{-}}{2}} \, .\, b^{\frac{2q^{-} - {q^{-}}^{2}}{2}} \leq 2 \, \max \, \{ \, \rho_{1} ( a ), \rho_{1} ( a )^{\frac{q^{-}}{2}} \, \} \, \max \, \{ \, \rho_{q( \cdot )} ( b )^{\frac{2 - q^{-}}{2}}, \, 1 \, \}.
\end{equation}
\end{rem}
\begin{defn}
We say that the Riemannian n-manifold $ ( M, g ) $ has property $ B_{vol} ( \lambda, v ) $ where $\lambda$ is a constant, if its geometry is bounded in the following sense:\\
$ \hspace*{1cm} \bullet \,\, \mbox{The Ricci tensor of g noted by Rc ( g ) verifies,} \,\,Rc ( g ) \geq \lambda ( n - 1 ) \, g $ for some $ \lambda,$ where $n$ is the dimension of $M.$\\
$ \hspace*{1cm} \bullet  $ There exists some $ v > 0 $ such that $ | \, B_{1} ( x ) \, |_{g} \geq v \,\, \forall x \in M,$ where $B_{1} ( x ) $ are the balls of radius 1 centered at some point $x$ in terms of the volume of smaller concentric balls.
\end{defn}
\begin{rem}
If $ M = \Omega \subseteq  \mathbb{R}^{N} $ is a bounded open set, then the following inequality is related to the two exponents $p, \, q $ ( isotropic case )
$$ \frac{q}{p} < 1 + \frac{1}{N}.$$
This condition is essential, among others, for the embeddings of spaces to be satisfied.
\end{rem}
\begin{prop} \cite{aubin1982nonlinear,hebey2000nonlinear} \label{prop10}
Let $ ( M, g ) $ be a complete compact Riemannian n-manifold. Then, if the embedding $ L^{1}_{1} ( M ) \hookrightarrow L^{\frac{n}{n - 1}} ( M )$ holds, then whenever the real numbers $q$ and $p$ satisfy $$ 1 \leq q < n, $$ and $$ q \leq p \leq q* = \frac{n q}{n - q}, $$ the embedding $ L^{q}_{1} ( M ) \hookrightarrow L^{p} ( M ) $ also holds.
\end{prop}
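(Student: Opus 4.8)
The plan is to run the classical Sobolev bootstrap: feed the hypothesized inequality behind $L^{1}_{1}(M)\hookrightarrow L^{n/(n-1)}(M)$ a well-chosen power of $u$ to obtain $L^{q}_{1}(M)\hookrightarrow L^{q^{*}}(M)$, and then use that a compact $M$ has finite volume to cover the remaining exponents $q\le p\le q^{*}$. Concretely, I would first rewrite the assumption quantitatively: there is $C_{0}>0$ with $\|v\|_{L^{n/(n-1)}(M)}\le C_{0}\big(\|\nabla v\|_{L^{1}(M)}+\|v\|_{L^{1}(M)}\big)$ for every $v\in L^{1}_{1}(M)$. Fix $q$ with $1\le q<n$ and set $\gamma=\dfrac{q(n-1)}{n-q}$, so that $\gamma\ge q\ge 1$; an elementary computation gives the identities $\dfrac{\gamma n}{n-1}=q^{*}$, $(\gamma-1)\dfrac{q}{q-1}=q^{*}$ and $\dfrac{q^{*}(n-1)}{n}=\gamma$, which are exactly what makes the estimate close.

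Next, for $u\in C^{q}_{1}(M)$ — on a compact manifold this means $u$ and $\nabla u$ are bounded, so all the integrals below are finite — I would apply the displayed inequality to $v=|u|^{\gamma}$, using the chain rule $\big|\nabla|u|^{\gamma}\big|=\gamma|u|^{\gamma-1}|\nabla u|$ a.e. The right-hand side is then controlled by the Hölder inequality of the previous proposition, with the conjugate pair $q,\,q'=\tfrac{q}{q-1}$: one gets $\int_{M}|u|^{\gamma-1}|\nabla u|\,dv_{g}\le \| \,|u|^{\gamma-1}\,\|_{L^{q'}(M)}\,\|\nabla u\|_{L^{q}(M)}$ and, writing $|u|^{\gamma}=|u|^{\gamma-1}|u|$, also $\int_{M}|u|^{\gamma}\,dv_{g}\le \| \,|u|^{\gamma-1}\,\|_{L^{q'}(M)}\,\|u\|_{L^{q}(M)}$. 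By the exponent identities, $\| \,|u|^{\gamma-1}\,\|_{L^{q'}(M)}=\|u\|_{L^{q^{*}}(M)}^{\gamma-1}$ while the left-hand side equals $\|u\|_{L^{q^{*}}(M)}^{\gamma}$; dividing by $\|u\|_{L^{q^{*}}(M)}^{\gamma-1}$ (trivial when it vanishes) yields $\|u\|_{L^{q^{*}}(M)}\le C_{0}\big(\gamma\|\nabla u\|_{L^{q}(M)}+\|u\|_{L^{q}(M)}\big)\le C\,\|u\|_{L^{q}_{1}(M)}$. Since $C^{q}_{1}(M)$ is dense in $L^{q}_{1}(M)$ by definition, a density argument together with Fatou's lemma extends this to every $u\in L^{q}_{1}(M)$, giving $L^{q}_{1}(M)\hookrightarrow L^{q^{*}}(M)$. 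Finally, compactness of $M$ forces $\mathrm{vol}_{g}(M)<\infty$, so Hölder's inequality yields $L^{q^{*}}(M)\hookrightarrow L^{p}(M)$ for every $q\le p\le q^{*}$ (the case $p=q$ being immediate from $L^{q}_{1}(M)\subset L^{q}(M)$), hence $L^{q}_{1}(M)\hookrightarrow L^{p}(M)$ on the whole range.

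The exponent arithmetic is routine; the step that needs genuine care is justifying that $v=|u|^{\gamma}$ is an admissible test function, i.e. that $|u|^{\gamma}\in L^{1}_{1}(M)$ with the stated gradient — which I would settle by approximating $t\mapsto|t|^{\gamma}$ by smooth functions and using that $u$ is smooth with bounded derivatives on the compact $M$ — together with the observation that when $q=1$ one has $\gamma=1$ and the statement collapses to the hypothesis itself followed by the finite-volume embedding. I would also stress that the constant in the final inequality is independent of $u$, which holds automatically here because no lower-order term had to be reabsorbed into the left-hand side.
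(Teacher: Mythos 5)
Your argument is correct and is precisely the classical bootstrap found in the cited references (Aubin, Hebey): test the $L^{1}_{1}\hookrightarrow L^{n/(n-1)}$ inequality on $v=|u|^{\gamma}$ with $\gamma=\frac{q(n-1)}{n-q}$, close the estimate via H\"older, pass from $C^{q}_{1}(M)$ to $L^{q}_{1}(M)$ by density, and descend to $q\le p\le q^{*}$ using the finite volume of the compact manifold; your exponent identities and the treatment of the degenerate case $q=1$ all check out. The paper itself offers no proof of this proposition (it is quoted from \cite{aubin1982nonlinear,hebey2000nonlinear}), so your write-up simply supplies the standard argument the authors implicitly rely on.
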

\begin{prop} \cite{aubin1982nonlinear,hebey2000nonlinear}\label{prop6}
Assume that the complete compact Riemannian n-manifold $ ( M, g ) $ has property $ B_{vol} ( \lambda, v ) $ for some $ ( \lambda, v ).$ Then there exist positive constants $ \delta_{0} = \delta_{0} ( n, \, \lambda, \, v ) $ and $ A = A ( n, \, \lambda, \, v ) $, we have, if $ R \leq \delta_{0} $, if $ x \in M $ if $ 1 \leq q \leq n $, and if $ u \in L^{q}_{1,0} ( \, B_{R} ( x ) \, ) $ the estimate
$$ || \, u \, ||_{L^{p}} \leq A \,p \, || \, \nabla u \, ||_{L^{q}},$$ where $ \frac{1}{p} = \frac{1}{q} - \frac{1}{n}.$
\end{prop}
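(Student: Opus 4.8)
The plan is to reduce the estimate to its endpoint case $q=1$ and then run a Federer--Fleming (Nirenberg) iteration, keeping careful track of how the constant grows with $p$. So the first thing I would establish is the \emph{uniform local $L^1$-Sobolev inequality}: there exist $\delta_0=\delta_0(n,\lambda,v)>0$ and $A_0=A_0(n,\lambda,v)>0$ such that $\|w\|_{L^{n/(n-1)}}\le A_0\,\|\nabla w\|_{L^1}$ for every $x\in M$, every $R\le\delta_0$, and every $w\in L^1_{1,0}(B_R(x))$. This is where property $B_{vol}(\lambda,v)$ enters: the lower Ricci bound $Rc(g)\ge\lambda(n-1)g$ gives, via Bishop--Gromov, an upper bound on the volume ratios of concentric balls, while the uniform lower bound $|B_1(x)|_g\ge v$ fixes the absolute scale; together they give two-sided control of $|B_r(x)|_g$ for $r\le\delta_0$, which feeds into a local isoperimetric/coarea argument (equivalently, a comparison of $B_R(x)$ with a Euclidean ball in normal coordinates for $R$ small) to produce the inequality with constants depending only on $n,\lambda,v$. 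I expect this geometric step to be the main obstacle; it is essentially Hebey's local Sobolev lemma, and once it is in hand the rest is soft. One should also read the hypothesis as $1\le q<n$, since $q=n$ forces $p=\infty$ and the inequality then fails.

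Next I would carry out the iteration. Fix $1\le q<n$, put $p=\tfrac{nq}{n-q}$ and $t=\tfrac{(n-1)q}{n-q}\ge 1$. For $\varphi\in C^\infty_0(B_R(x))$ the function $|\varphi|^t$ belongs to $L^1_{1,0}(B_R(x))$ with $|\nabla|\varphi|^t|=t\,|\varphi|^{t-1}|\nabla\varphi|$ a.e., so applying the $q=1$ inequality to $w=|\varphi|^t$ and then Hölder's inequality with exponents $q$ and $q'=\tfrac{q}{q-1}$ gives
\[
\|\varphi\|_{L^{tn/(n-1)}}^{\,t}\;\le\;A_0\,t\,\|\nabla\varphi\|_{L^q}\,\Big(\int_{B_R(x)}|\varphi|^{(t-1)q'}\,dv_g\Big)^{1/q'}.
\]
The exponents are chosen precisely so that $\tfrac{tn}{n-1}=(t-1)q'=p$ and $t-\tfrac{p}{q'}=1$; hence the displayed inequality collapses to $\|\varphi\|_{L^p}^{\,t}\le A_0\,t\,\|\nabla\varphi\|_{L^q}\,\|\varphi\|_{L^p}^{\,t-1}$, and since $\|\varphi\|_{L^p}<\infty$ one divides to obtain $\|\varphi\|_{L^p}\le A_0\,t\,\|\nabla\varphi\|_{L^q}$. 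Because $t=\tfrac{(n-1)q}{n-q}\le\tfrac{nq}{n-q}=p$, this is $\|\varphi\|_{L^p}\le A_0\,p\,\|\nabla\varphi\|_{L^q}$, the claimed bound on smooth compactly supported functions; one then takes $A=A_0$.

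Finally I would remove the smoothness assumption by density: given $u\in L^q_{1,0}(B_R(x))$, take $\varphi_j\in C^\infty_0(B_R(x))$ with $\varphi_j\to u$ in $L^q_1$ and, along a subsequence, a.e.; the bound of the previous paragraph applied to each $\varphi_j$, together with Fatou's lemma on the left and convergence of $\|\nabla\varphi_j\|_{L^q}$ on the right, yields $\|u\|_{L^p}\le A\,p\,\|\nabla u\|_{L^q}$, and in particular $u\in L^p(B_R(x))$. The only genuinely non-routine ingredient in this scheme remains Step~1, the passage from $B_{vol}(\lambda,v)$ to a uniform $L^1$-Sobolev inequality on small balls; the exponent bookkeeping in the iteration and the density argument are standard — this is indeed why the scheme of Proposition~\ref{prop10} applies once the $q=1$ embedding is at our disposal, the point here being only its quantitative, ball-localized refinement with constant depending solely on $n,\lambda,v$.
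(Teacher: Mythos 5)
Your argument is correct, but note that the paper itself gives no proof of Proposition \ref{prop6} at all: it is stated as a quoted result with citations to Aubin and Hebey, and the subsequent variable-exponent version (Proposition \ref{prop61}) is likewise deferred to those references. What you have written is essentially a reconstruction of the standard proof from the cited sources, and it checks out: the exponent bookkeeping in the Federer--Fleming iteration is right, since with $t=\tfrac{(n-1)q}{n-q}$ one has $\tfrac{tn}{n-1}=(t-1)q'=p$ and $t-\tfrac{p}{q'}=1$, so the inequality for $w=|\varphi|^{t}$ does collapse to $\|\varphi\|_{L^{p}}^{t}\le A_{0}\,t\,\|\nabla\varphi\|_{L^{q}}\,\|\varphi\|_{L^{p}}^{t-1}$, and $t\le p$ gives the stated constant $A\,p$. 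The division by $\|\varphi\|_{L^{p}}^{t-1}$ is legitimate because $\varphi$ is smooth with compact support on a compact manifold, and the density/Fatou step to pass from $C_{0}^{\infty}(B_{R}(x))$ to $L^{q}_{1,0}(B_{R}(x))$ is routine given the paper's definition of that space as a completion. Your reading of the hypothesis as $1\le q<n$ is also the correct one; the printed $q\le n$ is a typo, since $q=n$ forces $p=\infty$. The one ingredient you do not prove, the uniform local $L^{1}$-to-$L^{n/(n-1)}$ Sobolev inequality on balls of radius $R\le\delta_{0}$ with constant depending only on $n,\lambda,v$, is precisely the content of Hebey's local Sobolev lemma under the hypothesis $B_{vol}(\lambda,v)$ (Bishop--Gromov plus the volume lower bound), so deferring it to the references is no worse than what the paper does for the entire proposition; just be aware that this is where all the geometric content lives, and that your proof is only as complete as that citation.
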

We can extend the above proposition from the case when exponents $p$ and $q$ are constant, when $p(\cdot)$ and $q(\cdot)$ are functions.
\begin{prop} \label{prop61}
Assume that for some $ ( \lambda, v ) $ the complete compact Riemannian n-manifold $( M, g ) $ has property $ B_{vol} ( \lambda, v ) $. Then there exist positive constants $ \delta_{0} = \delta_{0} ( n, \, \lambda, \, v ) $ and $ A = A ( n, \, \lambda, \, v ) $, we have, if $ R \leq \delta_{0} $, if $ x \in M $ if $ 1 \leq q(\cdot ) \leq n $, and if $ u \in L^{q( \cdot )}_{1,0} ( \, B_{R} ( x ) \, ) $ the estimate
$$ || \, u \, ||_{L^{p( \cdot )}} \leq A \,p^{-} \, || \, \nabla u \, ||_{L^{q( \cdot )}},$$ where $ \frac{p( \cdot )}{q( \cdot )} < 1 + \frac{1}{n}.$
\end{prop}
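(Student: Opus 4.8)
The plan is to reduce this variable-exponent estimate to its constant-exponent counterpart, Proposition~\ref{prop6}. Put $q^{-}=\inf_{M}q$ and $q^{+}=\sup_{M}q$, so $1\le q^{-}\le q(x)\le q^{+}\le n$, and fix $u\in L^{q(\cdot)}_{1,0}(B_{R}(x))$ with $R\le\delta_{0}$, the radius supplied by Proposition~\ref{prop6}. By homogeneity of the claimed inequality we may assume $\|\,|\nabla u|\,\|_{L^{q(\cdot)}(B_{R}(x))}=1$, equivalently $\rho_{q(\cdot)}(\nabla u)=\int_{B_{R}(x)}|\nabla u|^{q(x)}\,dv_{g}=1$; it then suffices to prove $\|u\|_{L^{p(\cdot)}(B_{R}(x))}\le A\,p^{-}$, i.e.\ $\rho_{p(\cdot)}\!\big(u/(A p^{-})\big)\le 1$, for some $A=A(n,\lambda,v)$.

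The argument has three steps. (i) \emph{Descend on the right}: since $R\le\delta_{0}$, the Ricci bound in $B_{vol}(\lambda,v)$ together with Bishop--Gromov gives $|B_{R}(x)|_{g}\le c(n,\lambda,v)$, and on a set of finite volume $L^{q(\cdot)}(B_{R}(x))\hookrightarrow L^{q^{-}}(B_{R}(x))$ with norm controlled by $1+|B_{R}(x)|_{g}$. Making this quantitative is precisely where the variable-exponent Hölder inequality proved above and the modular bounds \eqref{2.1}--\eqref{2.2} enter, after splitting $B_{R}(x)$ into $\{q<2\}$ and $\{q\ge 2\}$; the outcome is $\|\,|\nabla u|\,\|_{L^{q^{-}}(B_{R}(x))}\le C_{1}(n,\lambda,v)$, and the same finite-volume inclusion shows $u\in L^{q^{-}}_{1,0}(B_{R}(x))$. (ii) \emph{Apply the constant-exponent inequality}: Proposition~\ref{prop6} with the exponent $q^{-}$ yields $\|u\|_{L^{(q^{-})^{*}}(B_{R}(x))}\le A\,q^{-}\,\|\,|\nabla u|\,\|_{L^{q^{-}}(B_{R}(x))}$ with $\tfrac1{(q^{-})^{*}}=\tfrac1{q^{-}}-\tfrac1n$, which after absorbing $C_{1}$ into $A$ and writing $p^{-}$ for $(q^{-})^{*}$ is an estimate of the announced form. (iii) \emph{Ascend on the left}: the hypothesis $p(\cdot)/q(\cdot)<1+\tfrac1n$ forces $\tfrac1{p(x)}>\tfrac1{q(x)}-\tfrac1n$, i.e.\ $p(x)<q(x)^{*}$ with a uniform gap; provided $p(x)\le(q^{-}|_{B_{R}(x)})^{*}$ a.e.\ on $B_{R}(x)$, one gets $L^{(q^{-})^{*}}(B_{R}(x))\hookrightarrow L^{p(\cdot)}(B_{R}(x))$ on the finite-volume ball from the elementary splitting $\{|u|\le 1\}\cup\{|u|>1\}$ and \eqref{2.1}--\eqref{2.2}. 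Chaining (i)--(iii) gives the assertion.

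The main obstacle is hidden in step (iii): one needs the pointwise subcriticality $p(x)<q(x)^{*}$ to survive when $q(x)$ is lowered to the constant $q^{-}|_{B_{R}(x)}=\inf_{B_{R}(x)}q$. Because $u$ is supported in the small ball $B_{R}(x)$, only the behaviour of $q$ on $B_{R}(x)$ matters, and the oscillation of $q\in C(\overline M)$ there tends to $0$ with $R$ by uniform continuity; a short case analysis (according to whether $\inf_{B_{R}(x)}q$ is close to $n$ or not, using that $s\mapsto ns/(n-s)$ is uniformly continuous away from $n$) then shows $(q^{-}|_{B_{R}(x)})^{*}\ge p(x)$ once $R$ is small enough. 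The price is that $\delta_{0}$ may have to be taken smaller than the threshold coming from Proposition~\ref{prop6}, and possibly depending on the fixed exponents $p,q$ as well; and one must keep track that the volume bound, the two Lebesgue-embedding constants, and the Sobolev-conjugate comparison are all uniform in $x$, $R\le\delta_{0}$ and $u$. The borderline situation $q^{-}=n$, in which $(q^{-})^{*}=+\infty$, has to be excluded or handled separately through the limiting embedding $L^{1}_{1}\hookrightarrow L^{n/(n-1)}$.
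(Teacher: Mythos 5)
Your proposal cannot really be measured against ``the paper's own proof,'' because the paper does not give one: the entire argument offered for Proposition \ref{prop61} is the sentence that one should ``use the same technique as Proposition \ref{prop6}'' together with a citation of \cite{aubin1982nonlinear,hebey2000nonlinear}, i.e.\ a deferral to the constant-exponent literature with no indication of how the variable exponents are to be handled. Your three-step reduction --- freeze the exponent at $q^{-}$ on the small ball via the finite-volume embedding $L^{q(\cdot)}\hookrightarrow L^{q^{-}}$, apply Proposition \ref{prop6} with the constant exponent $q^{-}$, then climb back from $L^{(q^{-})^{*}}$ to $L^{p(\cdot)}$ using the modular estimates \eqref{2.1}--\eqref{2.2} --- is the natural way to make that deferral precise, and it is essentially the localization scheme of Gaczkowski--G\'orka--Pons for such embeddings. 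In that sense you have supplied strictly more than the paper does.

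That said, the gap you flag in step (iii) is real and not cosmetic. To have $p(x)\le\bigl(\inf_{B_{R}(x)}q\bigr)^{*}$ on the ball you must shrink $R$ according to the moduli of continuity of $p$ and $q$ (and keep $\sup_{B_{R}(x)}q$ away from $n$), so the radius threshold becomes $\delta_{0}=\delta_{0}(n,\lambda,v,p,q)$, whereas the statement asserts $\delta_{0}=\delta_{0}(n,\lambda,v)$ only; moreover the statement permits $q(\cdot)\le n$, so the endpoint $q^{-}=n$, where $(q^{-})^{*}=\infty$ and Proposition \ref{prop6} degenerates, is not excluded and must be treated separately through Proposition \ref{prop10}. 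Finally, step (ii) produces the constant $A\,(q^{-})^{*}$ (Proposition \ref{prop6} gives $A\,p$ with $p=q^{*}$, not $A\,q$), and $(q^{-})^{*}$ is not controlled by $p^{-}$ under the mere hypothesis $p(\cdot)/q(\cdot)<1+\tfrac1n$, so the announced constant $A\,p^{-}$ requires a further absorption into $A$ or a restatement. None of these defects is fatal to the strategy, but as written the proposition is only established with a weaker, exponent-dependent $\delta_{0}$ and an unspecified constant --- a caveat that applies equally to the paper's one-line proof.
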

\begin{proof}
To demonstrate this Proposition, we use the same technique as proposition \ref{prop6}, for more detail see \cite{aubin1982nonlinear,hebey2000nonlinear}.
\end{proof}
In the following, we denote for all $ u \in W_{0}^{1, q( x )} ( M )$ that $$ \rho_{p( \cdot )} ( u ) = \int_{M} | u( x ) |^{p( x )} \,\, dv_{g} ( x ) \,\,\, \mbox{and} \,\,\, \rho_{q( \cdot )} ( u ) = \int_{M} | u( x ) |^{q( x )} \,\, dv_{g} ( x ).$$
\begin{prop} \cite{aubin1982nonlinear,hebey2000nonlinear, gaczkowski2016sobolev} \label{prop7}
Assume that for some $ ( \lambda, v ) $ the complete compact Riemannian n-manifold $( M, g ) $ has property $ B_{vol} ( \lambda, v ) $. Let $ p \in \mathcal{P} ( M ) $ be uniformly continuous with $ q^{+} < n.$ Then $ L^{q( \cdot )}_{1} ( M ) \hookrightarrow L^{p( \cdot )} ( M ), \, \forall q \in \mathcal{P} ( M ) $ such that $ q \ll p \ll q* = \frac{n q}{n - q}. $ In fact, for $ || \, u \, ||_{L^{q( \cdot )}_{1}} $ sufficiently small we have the estimate $$ \rho_{p( \cdot )} ( u ) \leq G \, ( \, \rho_{q( \cdot )} ( u ) + \rho_{q( \cdot )} ( | \, \nabla u \, | ) \, ), $$ where the positive constant $G$ depend on $ n, \, \lambda, \, v, \, q $ and $ p $.
\end{prop}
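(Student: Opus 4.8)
The plan is to globalize the small‑ball Sobolev inequality of Proposition \ref{prop61} by a partition‑of‑unity argument — compactness of $M$ entering precisely to make the covering finite — and then to upgrade the resulting inequality between Luxemburg norms to the stated modular inequality by means of the standard norm/modular comparisons, the smallness of $\|u\|_{L^{q(\cdot)}_1}$ compensating for the lack of homogeneity of the modular.

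First, one reduces to $u\in C^{q(\cdot)}_1(M)$, which is dense in $L^{q(\cdot)}_1(M)$ by definition of the latter. Since $(M,g)$ is compact, fix a finite cover $M=\bigcup_{i=1}^{m}B_{R_i}(x_i)$ with each $R_i\le\delta_0$, the radius furnished by Proposition \ref{prop61}; by the uniform continuity of $p$ and $q$ the $R_i$ may in addition be chosen so small that $p$ and $q$ oscillate as little as we wish on each ball and the metric is comparable to the Euclidean one in the corresponding chart, so that the auxiliary results established above (the chart comparison of exponents and the Hölder‑type inequality) are available on each $B_{R_i}(x_i)$. Then choose a smooth subordinate partition of unity $\{\eta_i\}_{i=1}^{m}$ with $0\le\eta_i\le1$, $\sum_i\eta_i\equiv1$ and $|\nabla\eta_i|\le\Lambda$ for a uniform constant $\Lambda$.

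For each $i$ the function $\eta_i u$ is smooth with compact support contained in $B_{R_i}(x_i)$, hence $\eta_i u\in L^{q(\cdot)}_{1,0}(B_{R_i}(x_i))$, and Proposition \ref{prop61} applies and gives $\|\eta_i u\|_{L^{p(\cdot)}}\le A\,p^-\,\|\nabla(\eta_i u)\|_{L^{q(\cdot)}}$. Expanding $\nabla(\eta_i u)=\eta_i\nabla u+u\,\nabla\eta_i$ and using $0\le\eta_i\le1$, $|\nabla\eta_i|\le\Lambda$, one estimates $\|\nabla(\eta_i u)\|_{L^{q(\cdot)}}\le C_1\big(\|\nabla u\|_{L^{q(\cdot)}}+\|u\|_{L^{q(\cdot)}}\big)$; keeping the cut‑off factor under control directly in the variable‑exponent Luxemburg norm (rather than in a fixed $L^{r}$) is the awkward point, and it is handled by the Hölder‑type splitting recorded in \eqref{2.1}--\eqref{2.2}, which treats the region $\{q^-<2\}$ separately. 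Summing over $i$ and using $u=\sum_i\eta_i u$ together with the triangle inequality yields
$$ \|u\|_{L^{p(\cdot)}}\ \le\ \sum_{i=1}^{m}\|\eta_i u\|_{L^{p(\cdot)}}\ \le\ C_2\big(\|\nabla u\|_{L^{q(\cdot)}}+\|u\|_{L^{q(\cdot)}}\big)\ =\ C_2\,\|u\|_{L^{q(\cdot)}_1}, $$
with $C_2=C_2(n,\lambda,v,q,p)$, which already proves the continuous embedding $L^{q(\cdot)}_1(M)\hookrightarrow L^{p(\cdot)}(M)$.

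It remains to extract the modular form. Assume $\|u\|_{L^{q(\cdot)}_1}$ small enough that $\|u\|_{L^{q(\cdot)}}$, $\|\,|\nabla u|\,\|_{L^{q(\cdot)}}$ and $\|u\|_{L^{p(\cdot)}}$ are all $\le1$, which is legitimate by the estimate just obtained. Since these norms are $\le1$ one has $\rho_{p(\cdot)}(u)\le\|u\|_{L^{p(\cdot)}}^{\,p^-}$, while $\|u\|_{L^{q(\cdot)}}$ and $\|\,|\nabla u|\,\|_{L^{q(\cdot)}}$ are controlled by suitable powers of $\rho_{q(\cdot)}(u)$ and $\rho_{q(\cdot)}(|\nabla u|)$; inserting these into the norm inequality, using convexity to split the sum, and using $q\ll p\ll q^*$ together with the fact that all the modulars are $\le1$ to absorb the surplus exponents, one arrives at $\rho_{p(\cdot)}(u)\le G\big(\rho_{q(\cdot)}(u)+\rho_{q(\cdot)}(|\nabla u|)\big)$ with $G=G(n,\lambda,v,q,p)$. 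I expect the two genuine difficulties to be: (i) the estimate of $\nabla(\eta_i u)$ in the variable‑exponent norm when $q$ falls below $2$, which is the technically heaviest point and is exactly what \eqref{2.1}--\eqref{2.2} are designed for; and (ii) the final passage from the norm inequality to the modular one — since the modular is not $1$‑homogeneous there is no scaling to exploit, so the smallness hypothesis cannot be dispensed with and the bookkeeping of exponents must be carried out by hand.
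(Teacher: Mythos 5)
The paper does not actually prove this proposition: it is stated with citations to Aubin, Hebey and Gaczkowski--G\'orka--Pons and no proof environment follows it, so there is nothing internal to compare you against. Your strategy --- finite cover of the compact manifold by balls of radius at most $\delta_{0}$, a subordinate partition of unity, the local estimate of Proposition \ref{prop61} applied to each $\eta_{i}u$, and a final norm-to-modular conversion --- is the standard route and is the same covering argument the paper itself uses to prove Theorem \ref{theo1}, so the architecture is right. Two small remarks on the middle of the argument: the term $\|u\,\nabla\eta_{i}\|_{L^{q(\cdot)}}$ is controlled simply by $\Lambda\|u\|_{L^{q(\cdot)}}$, since multiplication by an $L^{\infty}$ function is trivially bounded on a Luxemburg norm --- the splitting \eqref{2.1}--\eqref{2.2} is not needed there, so the step you flag as the technically heaviest is in fact harmless; and note that Proposition \ref{prop61} as stated carries the hypothesis $p(\cdot)/q(\cdot)<1+1/n$, which is strictly narrower than the range $q\ll p\ll q^{*}$ claimed here, a mismatch already present in the paper rather than introduced by you.

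The genuine soft spot is the last step. From $\|u\|_{L^{p(\cdot)}}\le C_{2}\big(\|u\|_{L^{q(\cdot)}}+\|\nabla u\|_{L^{q(\cdot)}}\big)$ with all norms at most $1$, the comparison of Proposition \ref{prop13} gives $\rho_{p(\cdot)}(u)\le(2C_{2})^{p^{-}}\max\big(\rho_{q(\cdot)}(u)^{p^{-}/q^{+}},\rho_{q(\cdot)}(|\nabla u|)^{p^{-}/q^{+}}\big)$, and absorbing the exponent requires $p^{-}/q^{+}\ge1$. That holds under the paper's standing assumption \eqref{1.1}, so your argument closes in the only situation where the proposition is actually invoked; but it does \emph{not} follow from the bare hypothesis $q\ll p$ of the statement (take $q$ ranging over $[1.5,2.5]$ and $p=q+0.1$: then $p^{-}=1.6<q^{+}=2.5$ and $\rho^{p^{-}/q^{+}}\ge\rho$ for $\rho\le1$, so the ``surplus exponent'' points the wrong way). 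To prove the proposition as stated you must exploit the small oscillation of $p$ and $q$ on each ball --- which you arranged but then abandoned --- and carry out the norm-to-modular conversion chart by chart, where the local bounds satisfy $p_{i}^{-}\ge q_{i}^{+}$, before summing the modulars via $|u|^{p(x)}\le m^{p^{+}}\sum_{i}|\eta_{i}u|^{p(x)}$. As written, the global conversion in your last paragraph does not go through in the stated generality.
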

\begin{prop}\label{prop13} \cite{guo2015dirichlet}
Let $ u \in L^{q( x )} ( M ), \,\{ \, u_{k} \,\} \subset L^{q( x )} ( M ), \, k \in \mathbb{N},$ then we have
\begin{enumerate}
\item[(i)]  $|| u ||_{q( x )} < 1 \,\,\mbox{( resp. = 1, $>$ 1 )} \iff \rho_{q( x )} ( u ) < 1 \,\,\mbox{( resp. = 1, $>$ 1 )},$
\item[(ii)]  $ || u ||_{q( x )} < 1 \Rightarrow || u ||_{q( x )}^{q^{+}} \leq \rho_{q( x )} ( u ) \leq || u ||_{q( x )}^{q^{-}},$
\item[(iii)]  $ || u ||_{q( x )} > 1 \Rightarrow || u ||_{q( x )}^{q^{-}} \leq \rho_{q( x )} ( u ) \leq || u ||_{q( x )}^{q^{+}},$
\item[(iv)] $ \lim_{k \rightarrow + \infty} || u_{k} - u ||_{q( x )} = 0 \iff \lim_{k \rightarrow + \infty} \rho_{q( x )} ( u_{k} - u ) = 0. $
\end{enumerate}
\end{prop}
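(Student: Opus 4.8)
The plan is to reduce everything to the single fundamental identity
\[
\rho_{q(\cdot)}\!\left(\frac{u}{\|u\|_{q(x)}}\right) = 1 \qquad \text{whenever } u \neq 0,
\]
and then extract (i)--(iv) by elementary manipulations of the modular. First I would study the auxiliary function $\varphi(\lambda) = \rho_{q(\cdot)}(u/\lambda) = \int_M |u(x)|^{q(x)}\lambda^{-q(x)}\,dv_g(x)$ for $\lambda > 0$. Using $q^{+} < \infty$ one checks that if $\varphi$ is finite at one point it is finite on all of $(0,\infty)$ (compare $\lambda^{-q(x)}$ with $\lambda_1^{-q(x)}$ via $(\lambda_1/\lambda)^{q(x)} \leq \max\{(\lambda_1/\lambda)^{q^{-}}, (\lambda_1/\lambda)^{q^{+}}\}$), that $\varphi$ is continuous on $(0,\infty)$ by dominated convergence on compact subintervals, strictly decreasing, and tends to $0$ as $\lambda \to \infty$. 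Since $\|u\|_{q(x)} = \inf\{\lambda > 0 : \varphi(\lambda) \leq 1\}$ is then a positive real number for $u \neq 0$, continuity forces $\varphi(\|u\|_{q(x)}) = 1$, which is the desired identity.

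Next I would prove (ii) and (iii) simultaneously. Writing $\lambda = \|u\|_{q(x)}$ and $|u(x)|^{q(x)} = \lambda^{q(x)}\,(|u(x)|/\lambda)^{q(x)}$, integrating over $M$, and using $\min\{\lambda^{q^{-}},\lambda^{q^{+}}\} \leq \lambda^{q(x)} \leq \max\{\lambda^{q^{-}},\lambda^{q^{+}}\}$ together with $\rho_{q(\cdot)}(u/\lambda)=1$, we obtain $\min\{\lambda^{q^{-}},\lambda^{q^{+}}\} \leq \rho_{q(\cdot)}(u) \leq \max\{\lambda^{q^{-}},\lambda^{q^{+}}\}$; distinguishing the cases $\lambda<1$ and $\lambda>1$ gives exactly (ii) and (iii). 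Part (i) is then immediate: if $\|u\|_{q(x)} = 1$ the identity gives $\rho_{q(\cdot)}(u) = 1$; if $\|u\|_{q(x)} < 1$ then (ii) gives $\rho_{q(\cdot)}(u) \leq \|u\|_{q(x)}^{q^{-}} < 1$; if $\|u\|_{q(x)} > 1$ then (iii) gives $\rho_{q(\cdot)}(u) \geq \|u\|_{q(x)}^{q^{-}} > 1$; the three converse implications follow because, for $u\neq 0$, these three cases for $\|u\|_{q(x)}$ are exhaustive and mutually exclusive (and the case $u=0$ is trivial).

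Finally, for (iv): if $\|u_k - u\|_{q(x)} \to 0$, then for $k$ large $\|u_k - u\|_{q(x)} < 1$ and (ii) yields $\rho_{q(\cdot)}(u_k - u) \leq \|u_k - u\|_{q(x)}^{q^{-}} \to 0$. Conversely, given $\varepsilon \in (0,1)$, once $\rho_{q(\cdot)}(u_k - u) \leq \varepsilon^{q^{+}}$ we have $\rho_{q(\cdot)}((u_k-u)/\varepsilon) = \int_M |u_k - u|^{q(x)}\varepsilon^{-q(x)}\,dv_g \leq \varepsilon^{-q^{+}}\rho_{q(\cdot)}(u_k-u) \leq 1$, hence $\|u_k - u\|_{q(x)} \leq \varepsilon$; since $\rho_{q(\cdot)}(u_k - u)\to 0$ this holds for all large $k$, so $\|u_k - u\|_{q(x)} \to 0$. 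The only genuinely delicate point in the whole argument is the first step --- justifying continuity of $\varphi$ and hence the identity $\rho_{q(\cdot)}(u/\|u\|_{q(x)}) = 1$ --- and this is exactly where the hypothesis that the exponent is bounded ($q^{+}<\infty$, built into $q\in\mathcal{P}(M)$, cf. \cite{fan2001spaces, guo2015dirichlet}) is used; the rest is bookkeeping with the modular.
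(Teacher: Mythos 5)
Your proof is correct, and in fact the paper offers no proof of this proposition at all: it is stated as a quotation from \cite{guo2015dirichlet}, resting ultimately on the classical norm--modular relations of \cite{fan2001spaces}. Your argument --- deriving the unit-ball identity $\rho_{q(\cdot)}\big(u/\|u\|_{q(x)}\big)=1$ from the continuity, strict monotonicity and limiting behaviour of $\lambda\mapsto\rho_{q(\cdot)}(u/\lambda)$ (where boundedness $q^{+}<\infty$ and finiteness of the modular for $u\in L^{q(x)}(M)$ are exactly the hypotheses needed), and then reading off (i)--(iv) by comparing $\lambda^{q(x)}$ with $\lambda^{q^{\pm}}$ --- is precisely the standard proof given in those references, so nothing is missing.
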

To compare the functionals $ ||\,\,\,\, ||_{q(\cdot)}$ and $ \rho_{q( \cdot )} (\, )$, one has the relation
$$ \min \{ \rho_{q(\cdot)}( u )^{\frac{1}{q^{-}}}, \, \rho_{q( \cdot )} ( u  )^{\frac{1}{q^{+}}} \} \leq || u ||_{L^{q(\cdot)}} \leq \max \{ \rho_{q( \cdot )} ( u )^{\frac{1}{q^{-}}}, \, \rho_{q( \cdot )} ( u )^{\frac{1}{q^{+}}} \}.$$
So, if $ q^{+} < n,$ we have the embedding
$$ L^{q( \cdot )}_{1, 0} ( B_{R} ( x ) ) \hookrightarrow L^{p( \cdot )} ( M ),$$ where, $ p( x ) = \frac{n q( x )}{n - q( x )}.$ In fact, there exists a positive constant $ D = D( n, \lambda, v, q^{+}, q^{-} )$ such as for every $u$ in $ L^{q( \cdot )}_{1, 0} ( B_{R} ( x ) ),$ we have by Poincaré inequality and Proposition \ref{prop13} that 
\begin{align*}
|| u ||_{L^{p( \cdot)} ( M )} & \leq D \, || u ||_{L^{q( \cdot )}_{1} ( M )} = D ( || u ||_{L^{q( \cdot )} ( M )} + || \nabla u ||_{L^{q( \cdot )} ( M )} ) \\& \leq D ( c + 1 ) \, || \nabla u ||_{L^{q( \cdot )} ( M )},
\end{align*}
where $c$ is the Poincaré constant. Hence, 
\begin{align} \label{Said}
\rho_{p( \cdot )} ( u ) & \leq || u ||^{p^{+}}_{L^{p( \cdot )} ( M )} \leq D^{p^{+}} ( c + 1 )^{p^{+}} \, || \nabla u ||^{p^{+}}_{L^{q( \cdot)} ( M )} \nonumber \\& \leq D^{p^{+}} ( c + 1 )^{p^{+}} \, \max \{ \rho_{q( \cdot)} (| \nabla u |)^{\frac{p^{+}}{q^{-}}}, \, \rho_{q( \cdot)} ( | \nabla u |)^{\frac{p^{+}}{q^{+}}} \} \nonumber \\& \leq  D^{p^{+}} ( c + 1 )^{p^{+}} \rho_{q( \cdot)} (| \nabla u |)^{\frac{p^{+}}{q^{-}}}.
\end{align}
\begin{defn}
The Sobolev space $ W^{1, q( x )} ( M )$ consists of such functions $ u \in L^{q( x )} ( M )$ for which $ \nabla^{k} u \in L^{q( x )} ( M )$ $k = 1, 2, \cdots, n.$ The norm is defined by
$$ ||\, u\,||_{W^{1, q( x )} ( M )} = ||\, u\,||_{L^{q( x )} ( M )} + \sum_{k = 1}^{n} ||\, \nabla^{k} u \,||_{L^{q( x )} ( M )}.$$
The space $ W_{0}^{1, q( x )} ( M )$ is defined as the closure of $ C^{\infty}_{c} ( M ) $ in $ W^{1, q( x )} ( M ),$
with $C_{c}^{\infty} ( M ) $ be the vector space of smooth functions with compact support on $M.$
\end{defn}
\begin{theorem}\label{theo1}
Let $M$ be a compact Riemannian manifold with a smooth boundary or without boundary and $ q( x ), \, p( x ) \in C( \overline{M} ) \cap L^{\infty} ( M ).$ Assume that $$ q( x ) < N , \hspace*{0.5cm} p( x ) < \frac{N\, q( x )}{N - q( x )} \,\, \mbox{for} \,\, x \in \overline{M}.$$
Then, $$ W^{1, q( x )} ( M ) \hookrightarrow L^{p( x )} ( M )$$
is a continuous and compact embedding.
\end{theorem}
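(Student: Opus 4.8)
The plan is to derive the embedding $W^{1,q(x)}(M)\hookrightarrow L^{p(x)}(M)$ in two stages: first establish the \emph{continuity} of the embedding by a covering-plus-localization argument reducing to the local estimates of Proposition \ref{prop61} and Proposition \ref{prop7}, and then upgrade continuity to \emph{compactness} via a Rellich--Kondrachov-type argument on each chart combined with a diagonal extraction. Throughout I would fix a finite atlas $\{(\Omega_i,\phi_i)\}_{i=1}^m$ of the compact manifold $M$ such that on each $\Omega_i$ the metric is comparable to the Euclidean one, i.e. $\tfrac12\delta_{jk}\le g_{jk}\le 2\delta_{jk}$ as bilinear forms, and a subordinate smooth partition of unity $\{\eta_i\}$ with $\sum_i\eta_i\equiv 1$ and $\mathrm{supp}\,\eta_i\Subset\Omega_i$. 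This comparability guarantees, via the Proposition relating $\mathcal P^{\log}(M)$ and $\mathcal P^{\log}(\mathbb R^N)$, that the pulled-back exponents $q\circ\phi_i^{-1},\,p\circ\phi_i^{-1}$ remain admissible variable exponents on $\phi_i(\Omega_i)\subset\mathbb R^N$, and it makes the Riemannian volume element $dv_g$ and the Euclidean one $dx$ mutually comparable on each chart, so that the manifold norms $\|\cdot\|_{L^{q(x)}(\Omega_i)}$ and $\|\cdot\|_{L^{q(x)}(M)}$ control, and are controlled by, the corresponding Euclidean norms on $\phi_i(\Omega_i)$ up to fixed constants.

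For \textbf{continuity}, given $u\in W^{1,q(x)}(M)$ write $u=\sum_i\eta_i u$; it suffices to bound each $\eta_i u$ in $L^{p(x)}(M)$ by $C\|u\|_{W^{1,q(x)}(M)}$. Since $\eta_i u$ has compact support in $\Omega_i$, transplanting it by $\phi_i$ gives a function $v_i=(\eta_i u)\circ\phi_i^{-1}\in W^{1,q(x)}_0(U_i)$ on a bounded open set $U_i=\phi_i(\Omega_i)\subset\mathbb R^N$, with $|\nabla v_i|\le C(|\nabla u|+|u|)$ pointwise (the extra $|u|$ term coming from the derivative hitting $\eta_i$, and the Jacobian factors absorbed into $C$ by the metric comparability). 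The hypotheses $q(x)<N$ and $p(x)<\tfrac{Nq(x)}{N-q(x)}$ on $\overline M$ — equivalently, by compactness and continuity, $p(x)\le q(x)^\ast-\varepsilon_0$ for a uniform $\varepsilon_0>0$, so in particular $p\ll q^\ast$ — put us exactly in the situation of Proposition \ref{prop61}/Proposition \ref{prop7} (after shrinking the charts if necessary so that each $\mathrm{supp}\,\eta_i$ sits in a ball $B_R(x_i)$ with $R\le\delta_0$), giving $\|v_i\|_{L^{p(x)}(U_i)}\le C\|\nabla v_i\|_{L^{q(x)}(U_i)}\le C(\|\nabla u\|_{L^{q(x)}(M)}+\|u\|_{L^{q(x)}(M)})$. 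Transplanting back and summing over the finitely many $i$ yields $\|u\|_{L^{p(x)}(M)}\le C\|u\|_{W^{1,q(x)}(M)}$, which is the continuity statement.

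For \textbf{compactness}, take a bounded sequence $(u_n)\subset W^{1,q(x)}(M)$, $\|u_n\|_{W^{1,q(x)}(M)}\le K$. On each chart the transplanted, localized sequence $(v_i^n)_n=((\eta_i u_n)\circ\phi_i^{-1})_n$ is bounded in $W^{1,q(x)}_0(U_i)$, hence — since $q(x)\ge q^->1$, so these spaces embed into the fixed-exponent space $W^{1,q^-}_0(U_i)$ on the bounded set $U_i$ — bounded in $W^{1,q^-}_0(U_i)$, and the classical Rellich--Kondrachov theorem gives a subsequence converging strongly in $L^{q^-}(U_i)$, say to $w_i$. A standard interpolation/boundedness argument then promotes this to strong convergence in $L^{p(x)}(U_i)$: writing $\|v_i^{n}-v_i^{m}\|_{L^{p(x)}}$ in terms of the modular and splitting the integral over $\{p(x)<2\}$ and its complement, one uses the Hölder-type inequality \eqref{2.1}--\eqref{2.2} to control it by a product of a (uniformly bounded) high-integrability factor — coming from the $W^{1,q(x)}\hookrightarrow L^{q(x)^\ast}$ bound just proved, with $q(x)^\ast>p(x)$ — and a small factor that is a positive power of $\|v_i^{n}-v_i^{m}\|_{L^{q^-}}$, which tends to zero. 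Passing to a diagonal subsequence that works simultaneously for all $i=1,\dots,m$, and recombining $u_n=\sum_i(v_i^n\circ\phi_i)$, produces a subsequence of $(u_n)$ that is Cauchy, hence convergent, in $L^{p(x)}(M)$.

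The \textbf{main obstacle} is the passage from the constant-exponent Rellich--Kondrachov conclusion (strong $L^{q^-}$ convergence) to strong convergence in the variable-exponent space $L^{p(x)}(M)$: one cannot simply invoke a known compact embedding, and must instead run the modular/Hölder interpolation carefully, exploiting the \emph{strict} inequality $p(x)<q(x)^\ast$ on the compact set $\overline M$ to gain a uniform gap that makes the high-integrability factor $\|v_i^n-v_i^m\|_{L^{q(x)^\ast}}$ bounded while the low-integrability factor is genuinely small; the estimates \eqref{2.1}, \eqref{2.2} and Proposition \ref{prop13} are exactly the tools that make this quantitative. A secondary technical point is ensuring the chart radii can be taken below $\delta_0$ uniformly, which is harmless since $M$ is compact and can be covered by finitely many such small balls.
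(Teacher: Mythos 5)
Your proposal is correct and follows essentially the same route as the paper's proof: a finite atlas on which the metric is comparable to the Euclidean one, a subordinate partition of unity, transplantation of each localized piece to a Euclidean ball, application of the local variable-exponent Sobolev embedding there, and recombination via $u=\sum_{\alpha}\pi_{\alpha}u$. The only divergence is at the local step, where the paper simply invokes Proposition \ref{prop10} together with the continuous and compact embedding theorems of \cite{fan2001spaces, gaczkowski2016sobolev} on $B_{0}(1)$, whereas you reprove the local compactness from the classical Rellich--Kondrachov theorem plus a modular interpolation using the uniform gap $p(x)\le q^{*}(x)-\varepsilon_{0}$; this extra detail is sound and makes the argument more self-contained, but it is a filled-in version of the same proof rather than a different one.
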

\begin{proof}
This proof is based to an idea introduced in \cite{fan2001spaces, guo2015dirichlet}. Let $ f : U ( \subset M ) \longrightarrow \mathbb{R}^{N} $ be an arbitrary local chart on $M$, and $V$ be any open set in $M$, whose closure is compact and is contained in $U$. Choosing a finite subcovering $\{\, V_{\alpha}\, \}_{\alpha = 1, \cdots, k} $ of $M$ such that $V_{\alpha}$ is homeomorphic to the open unit ball $B_{0} ( 1 )$ of $ \mathbb{R}^{N} $ and for any $ \alpha $ the components $ g_{ij}^{\alpha}$ of $g$ in $ ( V_{\alpha}, \, f_{\alpha} )$ satisfy $$ \frac{1}{\epsilon\, \delta_{ij}} \leq g_{ij}^{\alpha} < \epsilon \, \delta_{ij}$$ as bilinear forms, where the constant $\epsilon > 1 $ is given. Let $ \{ \, \pi_{\alpha} \}_{\alpha = 1, \cdots, k} $ be a smooth partition of unity subordinate to the finite covering $\{\, V_{\alpha}\, \}_{\alpha = 1, \cdots, k} $. It is obvious that if $ u \in W^{1, q( x )} ( M ), $ then $ \pi_{\alpha} \, u \in W^{1, q( x )} ( V_{\alpha} ) $ and $ ( f_{\alpha}^{-1} )^{*} ( \pi_{\alpha} u ) \in W^{1, q( f_{\alpha}^{-1} ( x ) )} ( B_{0} ( 1 ) ).$ According to propsition \ref{prop10} and  the Sobolev embeddings Theorem in \cite{fan2001spaces, gaczkowski2016sobolev}, we obtain the continuous and compact embedding
$$ W^{1, q( x )} ( V_{\alpha} ) \hookrightarrow L^{p( x )} ( V_{\alpha} ) \hspace*{0.5cm} \mbox{for each} \,\, \alpha = 1, \cdots, k.$$
Since $ u = \displaystyle\sum_{\alpha = 1}^{k} \pi_{\alpha} u,$ we can conclude that $$ W^{1, q( x )} ( M ) \subset L^{p( x )} ( M ),$$ and the embedding is continuous and compact.
\end{proof}
\begin{prop} \cite{aubin1982nonlinear}
If $( M, g )$ is complete, then  $W^{1, q( x )} ( M ) = W^{1, q( x )}_{0} ( M ).$
\end{prop}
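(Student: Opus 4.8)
The plan is to prove the nontrivial inclusion $W^{1,q(x)}(M)\subseteq W^{1,q(x)}_{0}(M)$, i.e. that every $u\in W^{1,q(x)}(M)$ is a limit, in the $\|\cdot\|_{W^{1,q(x)}(M)}$-norm, of functions in $C^\infty_{c}(M)$; the reverse inclusion is automatic since $C^\infty_{c}(M)\subset W^{1,q(x)}(M)$. I would split the approximation into two reductions: first approximate $u$ by \emph{compactly supported} Sobolev functions (this is where completeness of $(M,g)$ is used), then approximate a compactly supported Sobolev function by smooth compactly supported ones (a purely local, chart-by-chart mollification).

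For the first reduction, fix $x_{0}\in M$. Since $(M,g)$ is complete, Hopf--Rinow gives that the closed balls $\overline{B}_{R}(x_{0})$ are compact and that $\rho:=d_{g}(\cdot,x_{0})$ is $1$-Lipschitz, so $|\nabla\rho|\le1$ a.e. Choose $\zeta\in C^\infty(\mathbb{R})$ with $0\le\zeta\le1$, $\zeta\equiv1$ on $(-\infty,1]$ and $\zeta\equiv0$ on $[2,+\infty)$, and set $\eta_{j}=\zeta(\rho/j)$ (smoothing $\rho$ beforehand if one insists on $\eta_{j}\in C^\infty$; Lipschitz cutoffs already lie in $W^{1,q(x)}(M)$). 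Then $\eta_{j}$ has compact support, $0\le\eta_{j}\le1$, $\eta_{j}\to1$ pointwise, and $|\nabla\eta_{j}|\le\|\zeta'\|_\infty/j$. For $u\in W^{1,q(x)}(M)$ I put $u_{j}=\eta_{j}u\in W^{1,q(x)}(M)$, which has compact support, and then $u-u_{j}=(1-\eta_{j})u\to0$ in $L^{q(x)}(M)$ by dominated convergence and Proposition~\ref{prop13}(iv), while $\nabla u-\nabla u_{j}=(1-\eta_{j})\nabla u-u\,\nabla\eta_{j}$: the first term tends to $0$ in $L^{q(x)}(M)$ as before, and for $j$ large enough that $\|\zeta'\|_\infty/j<1$ one has $\rho_{q(\cdot)}(u\,\nabla\eta_{j})\le(\|\zeta'\|_\infty/j)^{q^{-}}\rho_{q(\cdot)}(u)\to0$, so Proposition~\ref{prop13}(iv) again yields $u_{j}\to u$ in $W^{1,q(x)}(M)$.

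For the second reduction, fix a compactly supported $v\in W^{1,q(x)}(M)$. I would cover $\operatorname{supp}v$ by finitely many charts $(V_{\alpha},f_{\alpha})$, $\alpha=1,\dots,k$, chosen exactly as in the proof of Theorem~\ref{theo1} so that $\tfrac12\delta_{ij}\le g^{\alpha}_{ij}\le2\delta_{ij}$ as bilinear forms, take a smooth partition of unity $\{\pi_{\alpha}\}$ subordinate to $\{V_{\alpha}\}$, and write $v=\sum_{\alpha}\pi_{\alpha}v$ with $\pi_{\alpha}v$ supported in $V_{\alpha}$. Transported to $\mathbb{R}^{N}$, each $(f_{\alpha}^{-1})^{*}(\pi_{\alpha}v)$ is a compactly supported element of $W^{1,\,q\circ f_{\alpha}^{-1}(\cdot)}(f_{\alpha}(V_{\alpha}))$; since $q\in\mathcal{P}^{log}(M)$ forces $q\circ f_{\alpha}^{-1}\in\mathcal{P}^{log}(\mathbb{R}^{N})$ (the Proposition relating $\mathcal{P}^{log}(M)$ and $\mathcal{P}^{log}(\mathbb{R}^{N})$), standard $\varepsilon$-mollification converges to it in the variable-exponent Sobolev norm on $\mathbb{R}^{N}$ by density of $C^\infty_{c}$ in $W^{1,p(\cdot)}(\mathbb{R}^{N})$, cf.\ \cite{fan2001spaces}. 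Pulling the mollified functions back through $f_{\alpha}$ and summing over $\alpha$ produces $C^\infty_{c}(M)$ approximants of $v$, the equivalence of the chart-wise and global norms being guaranteed once more by $\tfrac12\delta_{ij}\le g\le2\delta_{ij}$. Combining the two reductions gives $u\in W^{1,q(x)}_{0}(M)$.

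The hard part will be the first reduction: constructing the exhausting cutoffs $\eta_{j}$ with gradient of order $1/j$ is precisely where completeness is used (via Hopf--Rinow and the $1$-Lipschitz distance function), and one must check carefully that $\eta_{j}u\in W^{1,q(x)}(M)$ and, more importantly, that the modular estimates genuinely upgrade to convergence in the Luxemburg norm — which is why Proposition~\ref{prop13} is invoked rather than a naive dominated-convergence argument in $\|\cdot\|_{L^{q(x)}}$ directly. The Euclidean mollification in the second reduction is routine but does depend on the log-Hölder regularity of the exponent, so that hypothesis should be stated explicitly.
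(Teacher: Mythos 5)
The paper offers no proof of this proposition at all --- it is simply cited from Aubin, where it is established for constant exponents by exactly the route you take: exhaustion by cutoffs built from the distance function (this is where completeness enters, via Hopf--Rinow and the $1$-Lipschitz bound $|\nabla\rho|\le 1$), followed by chart-by-chart mollification of compactly supported functions. Your adaptation to variable exponents is correct: the modular estimate $\rho_{q(\cdot)}(u\,\nabla\eta_j)\le(\|\zeta'\|_\infty/j)^{q^{-}}\rho_{q(\cdot)}(u)$ is the right way to handle the inhomogeneity of the Luxemburg norm, and passing from modular to norm convergence via Proposition~\ref{prop13}(iv) is exactly what is needed. The one substantive point is the one you flag yourself: the mollification step genuinely requires a regularity hypothesis on $q(\cdot)$ (log-H\"older, or at least enough to rule out the Lavrentiev phenomenon and guarantee density of smooth functions in $W^{1,q(\cdot)}$ locally), and the proposition as stated in the paper omits this; since the paper elsewhere works with $\mathcal{P}^{\log}(M)$ this is a defensible standing assumption, but your remark that it should be made explicit is well taken.
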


The weighted variable exponent Lebesgue space $L_{\mu ( x )}^{q( x )} ( M )$ is defined as follows:
$$ L_{\mu ( x )}^{q( x )} ( M ) = \{ u: M \rightarrow \mathbb{R} \,\, \mbox{is measurable such that,}  \int_{M} \mu ( x ) | u( x ) |^{q( x )} \,\, dv_{g} ( x ) < + \infty \, \},$$
with the norm
$$ || u ||_{q( x ), \mu ( x )} = \inf \{ \gamma > 0 : \int_{M} \mu ( x ) \, \bigg| \frac{u( x )}{\gamma} \bigg|^{q( x )} \,\, dv_{g} ( x ) \leq 1 \,\}.$$
Moreover, the weighted modular on $L_{\mu ( x )}^{q( x )} ( M )$ is the mapping $ \rho_{q( \cdot ), \mu ( \cdot )} : L_{\mu ( x )}^{q( x )} ( M ) \rightarrow \mathbb{R}$ defined like
$$ \rho_{q( \cdot ), \mu ( \cdot )} ( u ) = \int_{M} \mu ( x ) | u( x )|^{q( x )} \,\, dv_{g} ( x ).$$
\begin{ex} 
As a simple example of $\mu ( x ),$ we can take $ \mu ( x ) = ( 1 + | x | )^{\varepsilon ( x )} $ with $\varepsilon ( \cdot ) \in C_{+} ( \overline{M} ).$
\end{ex}
\begin{prop} \label{proposition2.19}
Let $u$ and $ \{ u_{n} \} \subset L^{q( x )}_{\mu ( x )} ( M ),$ then we have the following results:
\begin{enumerate}
\item[$(1)$] $ || u ||_{q( \cdot), \mu ( \cdot )} < 1$ (resp. $= 1, > 1$) $\Longleftrightarrow \rho_{q( \cdot ), \mu ( \cdot )} ( u ) < 1$  (resp. $= 1, > 1$).
\item[$(2)$] $|| u ||_{q( \cdot ), \mu ( \cdot )} < 1 \Rightarrow || u ||^{q^{+}}_{q( \cdot ), \mu ( \cdot )} \leq \rho_{q( \cdot ), \mu ( \cdot )} ( u ) \leq || u ||^{q^{-}}_{q( \cdot ), \mu ( \cdot )}.$
\item[$(3)$] $|| u ||_{q( \cdot ), \mu ( \cdot )} > 1 \Rightarrow || u ||^{q^{-}}_{q( \cdot ), \mu ( \cdot )} \leq \rho_{q( \cdot ), \mu ( \cdot )} ( u ) \leq || u ||^{q^{+}}_{q( \cdot ), \mu ( \cdot )}.$
\item[$(4)$] $ \lim_{n \rightarrow + \infty} || u_{n} ||_{q( \cdot ), \mu ( x )} = 0 \Longleftrightarrow \lim_{n \rightarrow + \infty} \rho_{q( \cdot ), \mu ( \cdot )} ( u_{n} ) = 0.
$
\item[$(5)$] $ \lim_{n \rightarrow + \infty} || u_{n} ||_{q( \cdot ), \mu ( x )} = \infty \Longleftrightarrow \lim_{n \rightarrow + \infty} \rho_{q( \cdot ), \mu ( \cdot )} ( u_{n} ) = \infty.$
\end{enumerate}
\end{prop}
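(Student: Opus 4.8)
The plan is to recognise $L^{q(x)}_{\mu(x)}(M)$ as an ordinary variable exponent Lebesgue space built over a \emph{different} measure and then to invoke Proposition \ref{prop13}. Indeed, since $\mu$ is positive and (Lipschitz) continuous on $\overline M$, the prescription $d\tilde v_{g}(x):=\mu(x)\,dv_{g}(x)$ defines a finite Borel measure on $M$, and by their very definitions $\rho_{q(\cdot),\mu(\cdot)}(u)=\int_{M}|u(x)|^{q(x)}\,d\tilde v_{g}(x)$ while $\|u\|_{q(\cdot),\mu(\cdot)}=\inf\{\gamma>0:\int_{M}|u/\gamma|^{q(x)}\,d\tilde v_{g}\le 1\}$. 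Thus the weighted modular is exactly the unweighted modular on $(M,\tilde v_{g})$ and the weighted Luxemburg norm is exactly the corresponding Luxemburg norm, so (1)--(5) are Proposition \ref{prop13} transcribed verbatim. For a self-contained argument I would nonetheless run the classical reasoning directly, as follows.

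First I would record the elementary behaviour of the one–variable function $\Phi_{u}(\gamma):=\rho_{q(\cdot),\mu(\cdot)}(u/\gamma)$ on $(0,\infty)$. Writing $\Phi_{u}(\gamma)=\int_{M}\mu(x)\,|u(x)|^{q(x)}\gamma^{-q(x)}\,dv_{g}(x)$, one sees at once that $\Phi_{u}$ is nonnegative; it is strictly decreasing whenever $u\not\equiv 0$ (pointwise $\gamma\mapsto\gamma^{-q(x)}$ is strictly decreasing); it is continuous on $(0,\infty)$ by dominated convergence, the domination on $[\gamma_{0}/2,\infty)$ being $\mu|u|^{q(x)}(2/\gamma_{0})^{q(x)}$, which is integrable because $u\in L^{q(x)}_{\mu(x)}(M)$ and this space is stable under multiplication by constants (here $q^{+}<\infty$ is used); and $\Phi_{u}(\gamma)\to 0$ as $\gamma\to\infty$ while $\Phi_{u}(1)=\rho_{q(\cdot),\mu(\cdot)}(u)$. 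From strict monotonicity and continuity it follows that $\|u\|_{q(\cdot),\mu(\cdot)}$ is characterised by $\Phi_{u}(\gamma)>1$ for $\gamma<\|u\|_{q(\cdot),\mu(\cdot)}$ and $\Phi_{u}(\gamma)\le 1$ for $\gamma\ge\|u\|_{q(\cdot),\mu(\cdot)}$, in particular $\Phi_{u}(\|u\|_{q(\cdot),\mu(\cdot)})\le 1$; comparing these with $\Phi_{u}(1)=\rho_{q(\cdot),\mu(\cdot)}(u)$ yields statement $(1)$.

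Next, for $(2)$ and $(3)$ I would exploit the bounds $q^{-}\le q(x)\le q^{+}$: for $\gamma\ge 1$ one has $\gamma^{-q^{+}}\le\gamma^{-q(x)}\le\gamma^{-q^{-}}$, and for $0<\gamma\le 1$ the reversed chain holds, so on each regime $\Phi_{u}(\gamma)$ is squeezed between $\gamma^{-q^{+}}\rho_{q(\cdot),\mu(\cdot)}(u)$ and $\gamma^{-q^{-}}\rho_{q(\cdot),\mu(\cdot)}(u)$. If $\|u\|_{q(\cdot),\mu(\cdot)}>1$, plugging $\gamma\in(1,\|u\|_{q(\cdot),\mu(\cdot)})$ (where $\Phi_{u}(\gamma)>1$) and $\gamma>\|u\|_{q(\cdot),\mu(\cdot)}$ (where $\Phi_{u}(\gamma)\le 1$) into these inequalities and then letting $\gamma\to\|u\|_{q(\cdot),\mu(\cdot)}$ gives $\|u\|_{q(\cdot),\mu(\cdot)}^{q^{-}}\le\rho_{q(\cdot),\mu(\cdot)}(u)\le\|u\|_{q(\cdot),\mu(\cdot)}^{q^{+}}$, i.e. $(3)$; the case $\|u\|_{q(\cdot),\mu(\cdot)}<1$ is handled identically with the two exponents interchanged and produces $(2)$. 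Finally $(4)$ and $(5)$ are bookkeeping: if $\|u_{n}\|_{q(\cdot),\mu(\cdot)}\to 0$ then eventually $\|u_{n}\|_{q(\cdot),\mu(\cdot)}<1$ and $(2)$ forces $\rho_{q(\cdot),\mu(\cdot)}(u_{n})\le\|u_{n}\|_{q(\cdot),\mu(\cdot)}^{q^{-}}\to 0$; conversely if $\rho_{q(\cdot),\mu(\cdot)}(u_{n})\to 0$ then by $(1)$ eventually $\|u_{n}\|_{q(\cdot),\mu(\cdot)}<1$ and $(2)$ gives $\|u_{n}\|_{q(\cdot),\mu(\cdot)}^{q^{+}}\le\rho_{q(\cdot),\mu(\cdot)}(u_{n})\to 0$, whence $\|u_{n}\|_{q(\cdot),\mu(\cdot)}\to 0$; statement $(5)$ follows the same scheme using $(3)$ with ``$>1$'' in place of ``$<1$''. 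The only genuinely delicate point is $(1)$ — precisely the \emph{unit-ball coincidence} $\Phi_{u}(\|u\|_{q(\cdot),\mu(\cdot)})\le 1$ together with the strict separation of the cases $<1$, $=1$, $>1$ — which relies on the continuity and strict monotonicity of $\Phi_{u}$; once $(1)$ is in hand, everything else is elementary manipulation of the inequalities $\gamma^{-q^{\pm}}$.
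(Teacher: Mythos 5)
Your proof is correct. Note, however, that the paper itself states Proposition \ref{proposition2.19} without any proof at all --- it is presented as the weighted counterpart of Proposition \ref{prop13} (which is itself quoted from the literature), so there is no in-paper argument to compare against. Your opening observation is exactly the justification the authors implicitly rely on: since $\mu$ is continuous and bounded away from $0$ on the compact $\overline M$, the measure $d\tilde v_g=\mu\,dv_g$ is a genuine (finite) Borel measure, the weighted modular and Luxemburg norm are the unweighted ones over $(M,\tilde v_g)$, and the norm--modular relations (1)--(5) are measure-independent, so Proposition \ref{prop13} transfers verbatim. Your self-contained argument via $\Phi_u(\gamma)=\rho_{q(\cdot),\mu(\cdot)}(u/\gamma)$ is the standard one and is carried out correctly: the continuity (dominated convergence, using $q^+<\infty$), strict monotonicity, the unit-ball identity $\Phi_u(\|u\|_{q(\cdot),\mu(\cdot)})\le 1$ giving (1), the $\gamma^{-q^{\pm}}$ squeeze giving (2)--(3), and the limiting arguments for (4)--(5) are all sound. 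The only cosmetic remark is that in (1) the three ``resp.'' equivalences deserve an explicit word about trichotomy (each forward implication plus exhaustiveness of the three cases yields the converses), and in the $\|u\|_{q(\cdot),\mu(\cdot)}=1$ case one needs both $\Phi_u(1)\le 1$ and $\lim_{\gamma\uparrow 1}\Phi_u(\gamma)\ge 1$ to conclude $\rho_{q(\cdot),\mu(\cdot)}(u)=1$; you have all the ingredients for this, it is just left slightly implicit.
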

Note that, the non-negative weighted function $ \mu \in C( \overline{M} )$ satisfy the following hypothesis:\\
$ \mu ( \cdot ) : \overline{M} \longrightarrow \mathbb{R}^{+}_{*}$ such that $ \mu (\cdot ) \in L^{\varepsilon ( x )} ( M )$ with 
\begin{equation}\label{2.3}
\frac{N p( x )}{N p( x ) - q( x ) ( N - p( x ) )} < \varepsilon ( x ) < \frac{p( x )}{p( x ) - q( x )} \,\,\, \mbox{for all} \,\,\, x \in \overline{M}.
\end{equation}
Indeed, since $\mu ( \cdot ) : \overline{M} \longrightarrow \mathbb{R}^{+}_{*},$ then, there exists $ \mu_{0} > 0, $ and for all $ x \in M, $ we have that $ \mu ( x ) > \mu_{0}.$
\begin{theorem} \label{theoremK}
Let $M$ be a compact Riemannian manifold with a smooth boundary or without boundary and $ p( x ), \, q( x ) \in C( \overline{M} ) \cap L^{\infty} ( M ).$ Assume that the assumption \eqref{2.3} is true. Then, the embedding $$ W^{1, q( x )} ( M ) \hookrightarrow L^{q( x )}_{\mu ( x )} ( M ),$$ is compact 
\end{theorem}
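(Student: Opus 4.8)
The plan is to reduce the statement, by means of the Hölder inequality for variable exponents established above, to an ordinary Sobolev embedding of the form $W^{1,q(x)}(M)\hookrightarrow L^{s(x)}(M)$ for a suitable intermediate exponent $s(\cdot)$, and then to transfer the compactness provided by Theorem \ref{theo1}. Let $\varepsilon'(\cdot)$ denote the conjugate exponent of $\varepsilon(\cdot)$, i.e. $\frac{1}{\varepsilon(x)}+\frac{1}{\varepsilon'(x)}=1$; since $\varepsilon\in C_{+}(\overline{M})$ and $\overline{M}$ is compact, $\varepsilon'(\cdot)$ is a bounded variable exponent with $(\varepsilon')^{-}>1$. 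Put $s(x):=q(x)\,\varepsilon'(x)\in C(\overline{M})\cap L^{\infty}(M)$.

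The first --- and, I expect, the main --- step is to notice that assumption \eqref{2.3} is \emph{precisely} the requirement that places $s(\cdot)$ in the range covered by Theorem \ref{theo1}. A short computation shows that the left inequality in \eqref{2.3} is equivalent to $s(x)<\dfrac{Np(x)}{N-p(x)}$ and the right inequality to $s(x)>p(x)$, so that $p(x)\ll s(x)\ll \dfrac{Np(x)}{N-p(x)}$ on $\overline{M}$; in particular $s(\cdot)$ is subcritical. Hence Theorem \ref{theo1} gives that the embedding $W^{1,q(x)}(M)\hookrightarrow L^{s(x)}(M)$ is continuous and compact.

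Granting this, continuity of the target embedding follows from Hölder's inequality in modular form. For $u\in W^{1,q(x)}(M)$ the previous step gives $u\in L^{s(x)}(M)$, hence $|u|^{q(\cdot)}\in L^{\varepsilon'(\cdot)}(M)$, and applying the Hölder inequality proved above --- with the conjugate pair $\varepsilon(\cdot),\varepsilon'(\cdot)$ and a constant $r_{\varepsilon}>0$ depending only on $\varepsilon^{-},\varepsilon^{+}$ --- to the product $\mu\cdot|u|^{q(\cdot)}$ yields
\begin{equation*}
\rho_{q(\cdot),\mu(\cdot)}(u)=\int_{M}\mu(x)\,|u(x)|^{q(x)}\,dv_{g}(x)\leq r_{\varepsilon}\,\|\mu\|_{L^{\varepsilon(\cdot)}(M)}\,\big\|\,|u|^{q(\cdot)}\,\big\|_{L^{\varepsilon'(\cdot)}(M)} .
\end{equation*}
Since $\rho_{\varepsilon'(\cdot)}\big(|u|^{q(\cdot)}\big)=\int_{M}|u(x)|^{s(x)}\,dv_{g}(x)=\rho_{s(\cdot)}(u)$, Proposition \ref{prop13} --- used first with the exponent $\varepsilon'(\cdot)$ and then with the exponent $s(\cdot)$ --- bounds $\big\||u|^{q(\cdot)}\big\|_{L^{\varepsilon'(\cdot)}(M)}$ by a fixed power of $\|u\|_{L^{s(x)}(M)}$, and therefore, by the embedding of the second step, by a fixed power of $\|u\|_{W^{1,q(x)}(M)}$. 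Feeding this into the norm--modular comparisons for $L^{q(x)}_{\mu(x)}(M)$ in Proposition \ref{proposition2.19} produces $\|u\|_{q(\cdot),\mu(\cdot)}\leq C\,\|u\|_{W^{1,q(x)}(M)}$ with $C$ independent of $u$, which is the continuity of the embedding.

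For compactness I would argue with sequences. If $u_{n}\rightharpoonup u$ weakly in $W^{1,q(x)}(M)$, then $u_{n}\to u$ strongly in $L^{s(x)}(M)$ by the compact embedding of the second step, hence $\rho_{s(\cdot)}(u_{n}-u)\to 0$ by Proposition \ref{prop13}(iv). Applying the chain of estimates of the preceding paragraph to $u_{n}-u$ in place of $u$ gives $\rho_{q(\cdot),\mu(\cdot)}(u_{n}-u)\to 0$, and then $\|u_{n}-u\|_{q(\cdot),\mu(\cdot)}\to 0$ by Proposition \ref{proposition2.19}(4), which is the asserted compactness. The part that will require the most care is not conceptual but purely a matter of bookkeeping: keeping track of which of the powers $1/(\varepsilon')^{\pm}$, $1/q^{\pm}$, $1/s^{\pm}$ appears at each modular--norm conversion --- distinguishing the cases where the Luxemburg norms involved are below or above $1$ --- so that the resulting constant is genuinely uniform, and carrying out cleanly the elementary algebra that turns \eqref{2.3} into the two-sided bound $p\ll q\,\varepsilon'\ll \frac{Np}{N-p}$.
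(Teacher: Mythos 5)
Your proposal is correct and follows essentially the same route as the paper: you introduce the intermediate exponent $s(x)=q(x)\varepsilon'(x)$ (the paper's $\theta(x)=\hat{\varepsilon}(x)q(x)$), check that \eqref{2.3} places it in the range of Theorem \ref{theo1}, obtain continuity from the H\"older inequality applied to $\mu\cdot|u|^{q(\cdot)}$ with the conjugate pair $\varepsilon,\varepsilon'$, and deduce compactness by passing a weakly convergent sequence through the compact embedding into $L^{s(x)}(M)$ and the modular--norm equivalences of Propositions \ref{prop13} and \ref{proposition2.19}. Your version is, if anything, slightly more explicit about the norm--modular bookkeeping and about working with $u_n-u$ rather than normalizing the weak limit to zero.
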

\begin{proof}
Let $ \theta  ( x ) = \frac{\varepsilon ( x )}{\varepsilon ( x ) - 1} q( x ) = \hat{\varepsilon} ( x ) q( x ),$ where $ \frac{1}{\varepsilon ( x )} + \frac{1}{\hat{\varepsilon} ( x )} = 1.$ From, \eqref{2.3}, we deduce that $\theta ( x ) < p^{*} ( x )$ for all $ x \in \overline{M},$ which implies by Theorem \ref{theo1}, that $W^{1, q( x )} ( M ) \hookrightarrow L^{\theta ( x )} ( M ).$ Hence, we have that $ | u |^{q( x )} \in L^{\theta ( x )} ( M )$ for any $u \in W^{1, q( x )} ( M )$. Now, using the Hölder inequality, we get
\begin{equation} \label{2.4}
\rho_{q( \cdot ), \mu ( \cdot )} ( u ) \leq r_{q} \,.\, || \mu ( x ) ||_{\varepsilon ( x )} || \, | u |^{q( x )} \, ||_{\hat{\varepsilon} ( x )} < + \infty.
\end{equation}
It follows that $ u \in L^{q( x )}_{\mu ( x )} ( M ),$ that is $$ W^{1, q( x )} ( M ) \hookrightarrow L^{q( x )}_{\mu ( x )} ( M ).$$
Next, we prove that this embedding is compact. For that, we consider $\{ u_{n} \} \subset W^{1, q( x )} ( M )$ such that $ u_{n} \rightharpoonup 0 $ weakly in $W^{1, q( x )} ( M )$ and since $  W^{1, q( x )} ( M ) \hookrightarrow  \hookrightarrow L^{\theta ( x )} ( M ),$ we obtain that 
$$ u_{n} \longrightarrow 0 \,\,\, \mbox{in} \,\,\, L^{\theta ( x )} ( M ).$$
Then, it follows that $ || \,  | u |^{q( x )} \,||_{\hat{\varepsilon} ( x )} \rightarrow 0 $ as $ n \rightarrow + \infty.$ By Hölder inequality and \eqref{2.4}, we have $$ \rho_{q( \cdot ), \mu ( \cdot )} ( u_{n} ) \longrightarrow 0.$$
From proposition \ref{proposition2.19}, result ( 4 ), we deduce that 
\begin{equation} \label{2.5}
|| u _{n} ||_{q(\cdot), \mu ( \cdot )} \longrightarrow 0 \,\,\, \mbox{as} \,\,\, n \rightarrow + \infty.
\end{equation}
Hence, the embedding $W^{1, q( x )} ( M ) \hookrightarrow L^{q( x )}_{\mu ( x )} ( M )$ is compact.
\end{proof}

\section{Nehari Manifold Analysis for $( \mathcal{P} )$} \label{section3}
In this section, we note by $D( M )$ the space of $C_{c}^{\infty}$ functions with compact support in $M$.
\begin{defn}\label{def 3.1}
$u \in W^{1, q( x )}_{0} ( M )$ is said to be a weak solution of the problem $ ( \mathcal{P} )$ if for every $\phi \in D( M ) $ we have
\begin{align*}
&\int_{M} \big( \, |\, \nabla u( x ) \,|^{p( x ) - 2} + \mu ( x ) |\, \nabla u( x ) \,|^{q( x ) - 2} \big) \,.\, g( \nabla u( x ), \, \nabla \phi ( x ) ) \,\, dv_{g} ( x ) \\&= \lambda \int_{M} | \, u( x ) |^{q( x ) - 2} \,.\, u( x ) \, \phi ( x ) \,\, dv_{g} ( x ) - \int_{M} |\, u( x )\, |^{p( x ) - 2} \,.\, u( x ) \,.\, \phi ( x ) \,\, dv_{g} ( x ) \\& \hspace*{0.3cm}+ \int_{M} f( x, u( x ) ) \,.\, \phi ( x ) \,\, dv_{g} ( x ).
\end{align*}
\end{defn}
The variable exponents $p, q \in C( \overline{M} )$, are assumed to satisfy the following assumption:
\begin{equation} \label{1.1}
1 < q^{-} \leq q^{+} < p^{-} \leq p^{+} < N.
\end{equation}
Then, we have
\begin{equation}\label{1994}
\frac{p^{+}}{q^{+} - q^{-}} < \frac{p^{+} - q^{+}}{p^{+} - q^{-}} - \frac{( q^{+} - q^{-} )\,.\,( p^{+} - q^{+} )}{( p^{+} - q^{-} )\,.\, ( p^{-} - q^{-} )}.
\end{equation}
We suppose $\frac{p^{-}}{q^{+}} \leq 1 + \frac{1}{N},$ and the function $ \mu : \overline{M} \rightarrow \mathbb{R}_{*}^{+}$ is Lipschitz continuous.\\

Let us consider the energy functional $ J_{\lambda} : W^{1, q( x )}_{0} ( M ) \longrightarrow \mathbb{R}$ associated to problem $ ( \mathcal{P} )$ which is defined by
\begin{align*}
J_{\lambda} ( u ) &= \int_{M} \frac{1}{p( x )} \, |\, \nabla u( x )\,|^{p( x )} \,\, dv_{g} ( x ) + \int_{M} \frac{\mu ( x )}{q( x )} \, |\, \nabla u( x )\,|^{q( x )} \,\, dv_{g} ( x ) \\& \hspace*{0.3cm}- \int_{M} \frac{\lambda}{q( x )} \, |\, u( x ) \,|^{q( x )} \,\, dv_{g} ( x ) + \int_{M} \frac{1}{p( x )} \, |\, u( x ) \,|^{p( x )} \,\, dv_{g} ( x ) \\& \hspace*{0.3cm} - \int_{M} F( x, u( x ) ) \,\, dv_{g} ( x ).
\end{align*}
And, for any $ u \in W^{1, q( x )}_{0} ( M )$ with $ ||\, u\,||_{W^{1, q( x )}_{0} ( M )} > 1, $ we have by $ ( f_{1} ), ( f_{2} ),$ \eqref{Said}, Proposition \ref{prop61} and Poincaré inequality that 
\begin{align*}
J_{\lambda} ( u ) & = \int_{M} \frac{1}{p( x )} \, |\, \nabla u( x )\,|^{p( x )} \,\, dv_{g} ( x ) + \int_{M} \frac{\mu ( x )}{q( x )} \, |\, \nabla u( x )\,|^{q( x )} \,\, dv_{g} ( x ) \\& \hspace*{0.3cm}- \int_{M} \frac{\lambda}{q( x )} \, |\, u( x ) \,|^{q( x )} \,\, dv_{g} ( x ) + \int_{M} \frac{1}{p( x )} \, |\, u( x ) \,|^{p( x )} \,\, dv_{g} ( x ) \\& \hspace*{0.3cm} - \int_{M} F( x, u( x ) ) \,\, dv_{g} ( x ) \\&\geq  \frac{1}{p^{+}} \int_{M} |\, \nabla u( x )\,|^{p( x )} \,\, dv_{g} ( x ) + \frac{\mu_{0}}{D^{p^{+}} ( c + 1 )^{p^{+}} q^{+}} \int_{M} |\, u( x )\,|^{p( x )} \,\,  dv_{g} ( x ) \\& \hspace*{0.3cm}- \frac{\lambda}{q^{-}} \int_{M} |\, u( x )|^{q( x )} \,\, dv_{g} ( x ) + \frac{1}{p^{+}} \int_{M} |\, u( x )\,|^{p( x )} \,\, dv_{g} ( x ) \\& \hspace*{0.3cm}- \frac{1}{\beta} \int_{M} f( x, u( x ) ) \,.\, u( x ) \,\, dv_{g} ( x ) \\& \geq \frac{1}{c p^{+}} \rho_{p( \cdot )} ( u ) + \frac{\mu_{0}}{D^{p^{+}} ( c + 1 )^{p^{+}}  q^{+}} \rho_{p( \cdot)} ( u ) - \frac{\lambda}{q^{-}} \rho_{q( \cdot )} ( u ) + \frac{1}{p^{+}} \rho_{p( \cdot )} ( u ) - \frac{1}{p^{+}} \rho_{q( \cdot )} ( u ) \\& \hspace*{0.3cm} ( \mbox{since} \,\, \beta > p^{+} \,\, \mbox{from}\,\, ( f_{1} ), \,\ \mbox{and} \,\, c \,\, \mbox{is the Poincaré constant} ). 
\end{align*}
According to the proposition \ref{prop13}, we have that 
\begin{align*}
J_{\lambda} ( u ) &\geq  \bigg( \frac{1}{c p^{+}} + \frac{1}{p^{+}} + \frac{\mu_{0}}{D^{p^{+}} ( c + 1 )^{p^{+}} q^{+}} \bigg) \, \rho_{p( \cdot )} ( u ) - \frac{\lambda}{q^{-}} \rho_{q( \cdot )} ( u ) - \frac{1}{p^{+}} \rho_{q( \cdot )} (u) \\& \geq \bigg( \frac{1}{c p^{+}} + \frac{1}{p^{+}} + \frac{\mu_{0}}{D^{p^{+}} ( c + 1 )^{p^{+}} q^{+}} \bigg) \, || u ||^{p^{-}}_{W^{1, q( x )}_{0} ( M )} - \bigg( \frac{\lambda}{q^{-}} + \frac{1}{p^{+}} \bigg) \, || u ||^{q^{+}}_{W^{1, q( x )}_{0} ( M )}.
\end{align*}
From \eqref{1.1}, we have that $ J_{\lambda} $ is not bounded below on the whole space $ W^{1, q( x )}_{0} ( M ),$ but it is bounded above on an appropriate subset of $W^{1, q( x )}_{0} ( M )$ which is the Nehari manifold associated to $J_{\lambda}$ defined by
$$ \mathcal{N}_{\lambda} = \{\, u \in W_{0}^{1, q( x )} ( M ) \backslash \{ 0 \} : \langle\, J_{\lambda}^{'} ( u ) , \, u \, \rangle = 0\, \}.$$
Indeed, if we take for example $X$ a Banach space, and $J$ the Euler (energy) functional associated with a variational problem on $X.$ If $J$ is bounded above and has a minimizer, then, this minimizer is a critical point of $J.$ Therefore, it is a weak solution of the variational problem. However, in many problems, $J$ is not bounded on the whole space $X,$ but is bounded on an appropriate subset of $X,$ which is the case of our problem. \\
So, it is clear that the critical points of the functional $J_{\lambda}$ must lie on $\mathcal{N}_{\lambda}$ and local minimizers on $\mathcal{N}_{\lambda}$ are usually critical points of $J_{\lambda}$. Thus, $ u\in \mathcal{N}_{\lambda}$ if and only if
\begin{align}\label{3.1}
\langle  J_{\lambda}^{'} ( u ) , \, u \rangle  & = \int_{M} |\, \nabla u( x )\,|^{p( x )} \,\, dv_{g} ( x ) + \int_{M} \mu ( x ) \,|\, \nabla u( x ) \,|^{q( x )} \,\, dv_{g} ( x ) \nonumber \\&- \lambda \, \int_{M} |\, u( x )\, |^{q( x )} \,\, dv_{g} ( x ) + \int_{M} |\, u( x )\,|^{p( x )} \,\, dv_{g} ( x ) \nonumber \\&- \int_{M} f( x, u( x ) ) \,.\, u( x ) \,\, dv_{g} ( x ) = 0.
\end{align}
Hence, $\mathcal{N}_{\lambda}$ contains every nontrivial weak solution of problem $( \mathcal{P} ) $ (see definition \ref{def 3.1}). Moreover, we have the following result

\begin{lemma}\label{lem 3.2}
Under assumptions $( f_{1} ) - ( f_{3} ).$ The energy functional $J_{\lambda}$ is coercive and bounded below on $ W^{1, q( x )}_{0} ( M )$.
\end{lemma}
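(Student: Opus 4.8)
The plan is to obtain coercivity directly from the chain of inequalities displayed just before the statement, and then to handle the bounded part of the space separately to get the lower bound.

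First, recall that for $\|u\|_{W^{1,q(x)}_0(M)} > 1$ the computation carried out above (using $(f_1)$, $(f_2)$, \eqref{Said}, Proposition \ref{prop61}, the Poincaré inequality and Proposition \ref{prop13}) already yields
\begin{equation*}
J_\lambda(u) \;\geq\; C_1\,\|u\|^{p^-}_{W^{1,q(x)}_0(M)} \;-\; C_2\,\|u\|^{q^+}_{W^{1,q(x)}_0(M)},
\end{equation*}
with $C_1 = \frac{1}{cp^+}+\frac{1}{p^+}+\frac{\mu_0}{D^{p^+}(c+1)^{p^+}q^+}>0$ and $C_2 = \frac{\lambda}{q^-}+\frac{1}{p^+}>0$, both independent of $u$. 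By \eqref{1.1} one has $q^+ < p^-$, so for $\|u\|_{W^{1,q(x)}_0(M)} > 1$ we may write
\begin{equation*}
J_\lambda(u) \;\geq\; \|u\|^{q^+}_{W^{1,q(x)}_0(M)}\Big( C_1\,\|u\|^{\,p^--q^+}_{W^{1,q(x)}_0(M)} - C_2 \Big),
\end{equation*}
and since $p^--q^+>0$ the right-hand side tends to $+\infty$ as $\|u\|_{W^{1,q(x)}_0(M)}\to+\infty$. This is precisely the coercivity of $J_\lambda$.

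For the lower bound, coercivity provides some $R>1$ with $J_\lambda(u)\geq 0$ whenever $\|u\|_{W^{1,q(x)}_0(M)}\geq R$, so it suffices to bound $J_\lambda$ from below on $\{\|u\|_{W^{1,q(x)}_0(M)}\leq R\}$. Discarding the three nonnegative terms $\int_M \frac{1}{p(x)}|\nabla u|^{p(x)}\,dv_g$, $\int_M \frac{\mu(x)}{q(x)}|\nabla u|^{q(x)}\,dv_g$ and $\int_M \frac{1}{p(x)}|u|^{p(x)}\,dv_g$, we obtain
\begin{equation*}
J_\lambda(u) \;\geq\; -\,\frac{\lambda}{q^-}\,\rho_{q(\cdot)}(u) \;-\; \int_M F(x,u(x))\,dv_g(x).
\end{equation*}
On the ball of radius $R$, $\rho_{q(\cdot)}(u)$ is bounded by Proposition \ref{prop13} together with the continuous embedding of Theorem \ref{theo1} (applied with the exponent $q(x)$ in place of $p(x)$), while $\int_M F(x,u)\,dv_g$ is bounded above by combining $(f_1)$, $(f_3)$ and the embeddings of Theorems \ref{theo1} and \ref{theoremK} to control $\int_M f(x,u)\,u\,dv_g$ and the contribution of the set where $|u|\leq A$. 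Hence $J_\lambda(u)\geq -C$ on this ball, and together with the case $\|u\|_{W^{1,q(x)}_0(M)}\geq R$ this shows $J_\lambda$ is bounded below on all of $W^{1,q(x)}_0(M)$.

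The main obstacle is the uniform upper estimate for $\int_M F(x,u)\,dv_g$ on bounded sets: the Ambrosetti--Rabinowitz inequality $(f_1)$ only controls $F$ where $|\alpha|>A$, so one must glue it to the near-zero behaviour dictated by $(f_3)$ and convert the resulting integrals of $f(x,u)u$ and of $|u|^{q(x)}$ into genuine bounds via the compact embeddings established earlier; one must also keep the regimes $\|u\|_{W^{1,q(x)}_0(M)}\leq 1$ and $\|u\|_{W^{1,q(x)}_0(M)}>1$ apart, since the modular--norm comparisons of Proposition \ref{prop13} reverse direction between them.
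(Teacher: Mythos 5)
Your argument has a genuine gap, and it begins with the choice of domain. The paper's proof of this lemma is carried out for $u\in\mathcal{N}_{\lambda}$ and its conclusion is coercivity and boundedness below \emph{on $\mathcal{N}_{\lambda}$}; the whole point of introducing the Nehari manifold (as the paper states just before the lemma, ``$J_{\lambda}$ is not bounded below on the whole space'') is that under the Ambrosetti--Rabinowitz condition $(f_{1})$ the functional cannot be coercive or bounded below on all of $W^{1,q(x)}_{0}(M)$. Indeed, $(f_{1})$ forces $F(x,\alpha)\geq C(x)\,|\alpha|^{\beta}$ for $|\alpha|>A$ with $\beta>p^{+}$, so along a ray $t\mapsto t\,u$ one gets $J_{\lambda}(t\,u)\leq C_{1}t^{p^{+}}-C_{2}t^{\beta}\rightarrow-\infty$. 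Consequently the inequality $J_{\lambda}(u)\geq C_{1}\|u\|^{p^{-}}-C_{2}\|u\|^{q^{+}}$ that you take as your starting point cannot hold on the whole space: the step in the pre-lemma computation that replaces $-\frac{1}{\beta}\int_{M}f(x,u)\,u\,dv_{g}$ by $-\frac{1}{p^{+}}\rho_{q(\cdot)}(u)$ is only justified near $u=0$ via $(f_{3})$ and fails for large $|u|$, which is precisely the regime where coercivity is decided.

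The missing idea is the Nehari constraint, which your proposal never uses. The paper's proof invokes $\langle J_{\lambda}'(u),u\rangle=0$, i.e.\ \eqref{3.1}, to substitute for $\frac{1}{p^{+}}\int_{M}|u|^{p(x)}\,dv_{g}$ in terms of the other integrals and of $\int_{M}f(x,u)\,u\,dv_{g}$; after this substitution the dangerous terms pair up as $\frac{1}{p^{+}}\int_{M}f(x,u)\,u\,dv_{g}-\int_{M}F(x,u)\,dv_{g}$, which is controlled by $(f_{1})$ because $\beta>p^{+}$, and one is left with $J_{\lambda}(u)\geq c_{1}\|u\|^{p^{-}}-c_{2}\|u\|^{q^{+}}$ with constants depending only on $\mu_{0}$, $\lambda$, $p^{\pm}$, $q^{\pm}$ and the embedding constants, whence coercivity and boundedness below on $\mathcal{N}_{\lambda}$ since $p^{-}>q^{+}$. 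Your second step (a lower bound on a norm ball) suffers from the same defect in aggravated form: hypotheses $(f_{1})$--$(f_{3})$ contain no upper growth bound on $f$, so there is no way to produce the uniform upper estimate for $\int_{M}F(x,u)\,dv_{g}$ on a Sobolev ball that you yourself flag as the main obstacle; on $\mathcal{N}_{\lambda}$ this estimate is unnecessary because the constraint absorbs the $F$-term. The proof should therefore be restricted to $\mathcal{N}_{\lambda}$ and built around \eqref{3.1}, as the paper's own argument does.
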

\begin{proof}
Let $ u \in \mathcal{N}_{\lambda}$ and $ ||\, u\,|| > 1,$ where $||\, . \,||$ is the induced norm of $W_{0}^{1, q( x )} (\Omega ) \backslash \{ 0 \}.$ Then, by \eqref{3.1}, \eqref{1.1}, \eqref{Said}, $( f_{1} ),\, ( f_{3} ),$  propositions \ref{prop61} and \ref{prop13}, we have
\begin{align*}
&J_{\lambda} ( u ) \geq \frac{1}{p^{+}} \int_{M} |\, \nabla u ( x )\,|^{p( x )} \,\, dv_{g} ( x )+ \frac{1}{q^{+}} \, \int_{M} \mu ( x ) |\, \nabla u( x ) \,|^{q( x )} \,\, dv_{g} ( x ) \\& \hspace*{0.3cm}- \frac{\lambda}{q^{-}} \, \int_{M} |\, u( x ) \,|^{q( x )} \,\, dv_{g} ( x ) + \frac{1}{p^{+}} \int_{M} | u( x ) |^{p( x )} \,\, dv_{g} ( x ) \\& \hspace*{0.3cm} - \int_{M} F( x, u( x ) ) \,\, dv_{g} ( x ) \\& = \frac{1}{p^{+}} \int_{M} \nabla u( x ) |^{p( x )} \,\, dv_{g} ( x ) + \frac{1}{q^{+}} \int_{M} \mu ( x ) | \nabla u( x ) |^{q( x )} \,\, dv_{g} ( x )\\& \hspace*{0.3cm} - \frac{\lambda}{q^{-}} \int_{M} | u( x ) |^{q( x )} \,\, dv_{g} ( x ) + \frac{1}{p^{+}} \bigg[ - \int_{M} | \nabla u( x ) |^{p( x )} \,\, dv_{g} ( x ) \\& \hspace*{0.3cm}- \int_{M} \mu ( x ) | \nabla u( x ) |^{q( x )} \,\, dv_{g} ( x ) + \lambda \int_{M} | u( x ) |^{q( x )} \,\, dv_{g} ( x ) \\& \hspace*{0.3cm}+ \int_{M} f( x, u( x ) )\,.\, u( x ) \,\, dv_{g} ( x ) \bigg] - \int_{M} F( x, u( x ) ) \,\, dv_{g} ( x ) \\& \geq \mu_{0} \bigg( \frac{1}{q^{+}} - \frac{1}{p^{+}} \bigg) \int_{M} | \nabla u( x )|^{q( x )} \,\, dv_{g} ( x ) + \lambda \bigg( \frac{1}{p^{+}} - \frac{1}{q^{-}} \bigg) \int_{M} | u ( x )|^{q( x )} \,\,dv_{g} ( x ) \\& \hspace*{0.3cm} + \frac{1}{\beta} \int_{M} f( x, u( x ) ) \,.\, u( x ) \,\, dv_{g} ( x ) + \int_{M} F( x, u( x ) ) \,\, dv_{g} ( x ) \\& \hspace*{0.3cm} \mbox{(since $\beta > p^{+},$ then $\frac{1}{p^{+}} > \frac{1}{\beta} $ and by $( f_{1} )$ we get the following inequality)} \\& \geq \frac{\mu_{0}}{D^{p^{+}} ( c + 1 )^{p^{+}}} \bigg(\frac{p^{+} - q^{+}}{p^{+} q^{+}}\bigg) \rho_{p( \cdot )} ( u ) + \lambda \bigg( \frac{q^{-} - p^{+}}{p^{+}q^{-}} \bigg) \rho_{q( \cdot )} ( u ) \,\, \mbox{( from Proposition \ref{prop61} )} \\& \geq \frac{\mu_{0}}{D^{p^{+}} ( c + 1 )^{p^{+}} } \bigg( \frac{p^{+} - q^{+}}{p^{+} q^{+}} \bigg) || u ||^{p^{-}} + \lambda \bigg( \frac{q^{-} - p^{+}}{p^{+}q^{-}} \bigg) || u ||^{q^{+}} \,\, \mbox{( from Proposition \ref{prop13} )} 
\end{align*}
As $ p^{-} > q^{+},$ then $ J_{\lambda} ( u ) \longrightarrow + \infty $ as $ ||\, u\,|| \rightarrow \infty.$ It follows that $ J_{\lambda} $ is coercive and bounded below on $ \mathcal{N}_{\lambda}.$
\end{proof}
Next, we consider the functional $ \psi : \mathcal{N}_{\lambda} \longrightarrow \mathbb{R} $ defined by $$ \psi_{\lambda} ( u ) = \,\langle  J^{'}_{\lambda} ( u ), \, u \rangle  \,\, \mbox{for all} \,\, u \in \mathcal{N}_{\lambda}.$$
Hence, it is natural to split $ \mathcal{N}_{\lambda}$ into three part: the first set corresponding to local minima, the second set corresponding to local maxima, and the third one corresponding to points of inflection which defined respectively as follows
$$ \mathcal{N}_{\lambda}^{+} = \{\, u \in \mathcal{N}_{\lambda} : \, \langle  \psi^{'}_{\lambda} ( u ), \, u \rangle  > 0 \, \},$$
$$ \mathcal{N}_{\lambda}^{-} = \{\, u \in \mathcal{N}_{\lambda} : \, \langle  \psi^{'}_{\lambda} ( u ), \, u \rangle  < 0 \, \},$$
$$ \mathcal{N}_{\lambda}^{0} = \{\, u \in \mathcal{N}_{\lambda} : \, \langle  \psi^{'}_{\lambda} ( u ), \, u \rangle  = 0 \, \}.$$
\begin{lemma}\label{lem 3.3}
Under assumptions $( f_{1} ) - ( f_{3} )$. There exists $ \lambda^{*} > 0 $ such that for any $ \lambda \in ( 0, \, \lambda^{*} )$ we have $ \mathcal{N}_{\lambda}^{0} = \emptyset. $
\end{lemma}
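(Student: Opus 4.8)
The plan is to argue by contradiction and to read off, from the two scalar identities that define membership in $\mathcal{N}_\lambda^0$, an explicit positive lower bound for $\lambda$. So fix $\lambda>0$ and suppose there is some $u\in\mathcal{N}_\lambda^0$; then $u\neq 0$, and we have at our disposal both $\psi_\lambda(u)=\langle J_\lambda'(u),u\rangle=0$, which is precisely \eqref{3.1} (membership in $\mathcal{N}_\lambda$), and $\langle\psi_\lambda'(u),u\rangle=0$ (membership in $\mathcal{N}_\lambda^0$). I will show that these two identities force $\lambda\ge\lambda^*$ for some $\lambda^*=\lambda^*(n,v,p^\pm,q^\pm,\mu_0,\beta)>0$, so that $\mathcal{N}_\lambda^0=\emptyset$ for every $\lambda\in(0,\lambda^*)$.

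First I would make $\langle\psi_\lambda'(u),u\rangle$ explicit by reading it as $\tfrac{d}{dt}\psi_\lambda(tu)\big|_{t=1}$, where $\psi_\lambda(tu)=\langle J_\lambda'(tu),tu\rangle$ is, term by term, a sum of integrals carrying the powers $t^{p(x)}$ and $t^{q(x)}$ together with a term $-\int_M f(x,tu)\,tu\,dv_g$; differentiating under the integral sign then gives $\langle\psi_\lambda'(u),u\rangle$ as the same expression with each $p(x)$- and $q(x)$-integral multiplied by the corresponding exponent, minus the (formal) $f$-contribution $\int_M\big(f(x,u)u+u^2 f_t(x,u)\big)\,dv_g$. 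Using $\psi_\lambda(u)=0$ to eliminate $\int_M f(x,u)u\,dv_g$, estimating $p^-\le p(x)\le p^+$ and $q^-\le q(x)\le q^+$, and using the Ambrosetti-Rabinowitz condition $(f_1)$ (with $(f_2)$--$(f_3)$ to absorb the part of the $f$-term on $\{|u|\le A\}$) to bound the remaining $f$-contribution from above, I expect to arrive at an inequality of the shape
$$ 0< a_1\,\rho_{p(\cdot)}(|\nabla u|)+a_2\,\rho_{q(\cdot),\mu(\cdot)}(|\nabla u|)+a_3\,\rho_{p(\cdot)}(u)\ \le\ \lambda\,c_1\,\rho_{q(\cdot)}(u), $$
with $a_1,a_2,a_3>0$ and $c_1>0$ depending only on $p^\pm,q^\pm,\mu_0$. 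It is exactly here that \eqref{1.1} (so that $q^+<p^-$) and the technical inequality \eqref{1994} are used, both to keep the three coefficients on the left strictly positive and to make the exponent bookkeeping of the last step consistent.

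Next I would establish a lower bound $\|u\|_{W^{1,q(x)}_0(M)}\ge\delta>0$, uniform for $\lambda$ in any bounded range. From $(f_3)$ and the subcritical growth of $f$ one gets, for each $\varepsilon>0$, a constant $C_\varepsilon$ with $|f(x,t)t|\le\varepsilon|t|^{q(x)}+C_\varepsilon|t|^{\beta}$; inserting this into \eqref{3.1} and invoking the continuous embeddings of Theorem \ref{theo1} and Theorem \ref{theoremK}, the Poincaré inequality, and \eqref{Said}, one deduces that $\rho_{q(\cdot)}(|\nabla u|)$ cannot be arbitrarily small, hence $\|u\|\ge\delta$. Finally, converting the moduli $\rho$ into powers of $\|u\|$ through Proposition \ref{prop13} and Proposition \ref{proposition2.19}, and using Proposition \ref{prop61} together with \eqref{Said} to dominate $\rho_{p(\cdot)}(u)$ by a power of $\rho_{q(\cdot)}(|\nabla u|)$, the displayed inequality becomes $c_2\,\|u\|^{a}\le\lambda\,c_1\,\|u\|^{b}$ with $a\ge b$ (this is the precise role of \eqref{1994}); together with $\|u\|\ge\delta$ this yields $\lambda\ge\lambda^*:=(c_2/c_1)\,\delta^{\,a-b}>0$, the contradiction sought.

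The main obstacle is the treatment of the perturbation $f$. Since $f$ is only assumed to be Caratheodory, $\langle\psi_\lambda'(u),u\rangle$ should not be manipulated as an exact identity but rather understood through the fibering map $\phi_u(t)=J_\lambda(tu)$ --- so that $\mathcal{N}_\lambda^+$, $\mathcal{N}_\lambda^-$, $\mathcal{N}_\lambda^0$ correspond to $\phi_u''(1)>0$, $\phi_u''(1)<0$, $\phi_u''(1)=0$ --- and only the one-sided bounds furnished by $(f_1)$ and $(f_3)$ should be used. Marrying these one-sided bounds with the non-homogeneous exponent bookkeeping --- the repeated passage between $\rho_{q(\cdot)}(\cdot)$ and $\|\cdot\|$, whose exponent depends on whether the relevant norm exceeds $1$, and the precise use of \eqref{1.1} and \eqref{1994} --- so that the final powers of $\|u\|$ genuinely satisfy $a\ge b$ is the delicate point of the argument.
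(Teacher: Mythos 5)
Your overall strategy (contradiction from the two identities $\langle J_\lambda'(u),u\rangle=0$ and $\langle\psi_\lambda'(u),u\rangle=0$, followed by exponent bookkeeping) is in the same family as the paper's, but the specific mechanism you propose has a gap at exactly the point you flag as ``the main obstacle,'' and it is not the mechanism the paper uses. Your argument hinges on extracting an inequality whose \emph{entire} right-hand side is proportional to $\lambda$, namely $0<a_1\rho_{p(\cdot)}(|\nabla u|)+a_2\rho_{q(\cdot),\mu(\cdot)}(|\nabla u|)+a_3\rho_{p(\cdot)}(u)\le\lambda c_1\rho_{q(\cdot)}(u)$, so that $\|u\|\ge\delta$ forces $\lambda\ge\lambda^*$. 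But after you use $\langle J_\lambda'(u),u\rangle=0$ to eliminate $\int_M f(x,u)u\,dv_g$, the leftover $f$-contribution in $\langle\psi_\lambda'(u),u\rangle$ (formally $\int_M f_t(x,u)u^2\,dv_g$, or its one-sided fibering-map surrogate) is controlled by none of $(f_1)$--$(f_3)$: the Ambrosetti--Rabinowitz condition compares $F$ with $fu$, not with the second variation, and $(f_3)$ only governs behaviour near $0$. The paper itself can only dispose of this term by the crude bound $\int_M f(x,u)u\,dv_g\le\int_M|u|^{q(x)}dv_g$, which leaves a $\lambda$-\emph{independent} summand $p^{+}\rho_{q(\cdot)}(u)$ on the right-hand side of the resulting estimate (see \eqref{3.3}, whose numerator is $\lambda(p^{+}-q^{-})+p^{+}$). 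With that extra term your final inequality becomes $c_2\|u\|^{a}\le(\lambda c_1+c_3)\|u\|^{b}$, and the conclusion $\lambda\ge\lambda^*>0$ evaporates. A secondary issue: your lower bound $\|u\|\ge\delta$ invokes a subcritical growth condition $|f(x,t)t|\le\varepsilon|t|^{q(x)}+C_\varepsilon|t|^{\beta}$ that is not among the stated hypotheses.

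For comparison, the paper's proof runs differently: assuming $u\in\mathcal{N}_\lambda^{0}$ with $\|u\|>1$, it derives \emph{two upper bounds} on $\|u\|$ --- the $\lambda$-free bound \eqref{3.2}, obtained by testing $\langle\psi_\lambda'(u),u\rangle=0$ from below with the exponents $p^{-},q^{-}$ and subtracting $q^{+}$ times \eqref{3.1}, and the $\lambda$-dependent bound \eqref{3.3}, obtained by testing from above with $p^{+},q^{+}$ and subtracting $p^{+}$ times \eqref{3.1} --- and then argues that for $\lambda$ small (and $\mu_0$ large), \eqref{1994}, \eqref{3.2} and \eqref{3.3} together force $\|u\|<1$, contradicting $\|u\|>1$. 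If you want to repair your plan along your own lines, you would need either an additional structural hypothesis on $f$ (e.g.\ monotonicity of $t\mapsto f(x,t)/|t|^{q(x)-1}$, or an explicit bound on $f_t$) to kill the residual $f$-term, or else to switch to the paper's ``two upper bounds versus $\|u\|>1$'' scheme.
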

\begin{proof}
Suppose otherwise, that is $ \mathcal{N}_{\lambda}^{0} \neq \emptyset  $ for all $ \lambda \in \mathbb{R} \backslash \{ 0\}.$ Let $ u \in \mathcal{N}_{\lambda}^{0} $ such that $ ||\, u\,|| > 1.$ Then by \eqref{3.1}, \eqref{1.1}, $( f_{1} )$ and the definition of $\mathcal{N}_{\lambda}^{0}, $ we have
\begin{align*}
0 = \langle \psi^{'}_{\lambda} ( u ), \, u \rangle & \geq \, \,p^{-} \int_{M} |\, \nabla u( x )\,|^{p( x )} \,\, dv_{g} ( x )  +  q^{-} \int_{M} \mu ( x ) \,|\, \nabla u ( x ) \,|^{q( x )} \,\, dv_{g} ( x )\\& \hspace*{0.2cm} - q^{+} \bigg[ \int_{M} |\, \nabla u( x ) \,|^{p( x )} \,\, dv_{g} ( x ) +  \int_{M} \mu ( x )\, |\, \nabla u( x )\,|^{q( x )} \,\, dv_{g} ( x ) \\ & \hspace*{0.2cm}+ \int_{M} |\, u( x )\,|^{p( x )} \,\, dv_{g} ( x ) - \int_{M} f( x, u( x ) ) \,.\, u( x ) \,\, dv_{g} ( x ) \, \bigg] \\& \hspace*{0.2cm}+ p^{-} \int_{M} |\, u( x )\,|^{p( x )} \,\, dv_{g} ( x ) - \int_{M} F( x, u( x ) ) \,\, dv_{g} ( x ) \\& \geq ( p^{-} - q^{+} ) \int_{M} | \nabla u( x ) |^{p( x )} \,\, dv_{g} ( x ) \\&\hspace*{0.2cm}+ ( q^{-} - q^{+} ) \int_{M} \mu ( x ) | \nabla u( x ) |^{q( x )} \,\, dv_{g} ( x ) \\& \hspace*{0.2cm}+ ( p^{-} - q^{+} ) \int_{M} | u( x ) |^{p( x )} \,\, dv_{g} ( x ) \\& \hspace*{0.2cm} + q^{+} \int_{M} f( x, u( x ) ) \,.\, u( x ) \,\, dv_{g} ( x ) - \int_{M} F( x, u( x ) ) \,\, dv_{g} ( x ) \\& \hspace*{0.2cm} \mbox{(since $ \beta > p^{+} > q^{+} $, and by \eqref{1.1} we have $ q^{+} > \frac{1}{q^{+}} > \frac{1}{\beta}.$ Then,} \\& \hspace*{0.4cm}\mbox{by $( f_{1} )$ we get the following inequality)}\\& \geq ( q^{-} - q^{+} ) \int_{M} \mu ( x ) | \nabla u( x ) |^{q( x )} \,\, dv_{g} ( x ) \\& \hspace*{0.2cm}+ ( p^{-} - q^{+} ) \int_{M} | u( x ) |^{p( x )} \,\, dv_{g} ( x ).
\end{align*}
Then, $$0 \geq ( p^{-} - q^{+} ) \int_{M} | u( x ) |^{p( x )} \,\, dv_{g} ( x ) + ( q^{-} - q^{+} ) \int_{M} \mu ( x ) | \nabla u( x ) |^{q( x )} \,\, dv_{g} ( x ).$$
Therefore, by Propositions \ref{prop13}, \ref{proposition2.19} and Theorem \ref{theoremK}, we have 
$$ 0 \geq ( p^{-} - q^{+} ) || u ||^{p^{-}} + c_{1} ( q^{-} - q^{+} ) || u ||^{q^{+}}, $$
where $c_{1}$ being the constant of the embedding Theorem \ref{theoremK}.\\
Hence,
\begin{equation}\label{3.2}
||\, u\,|| \leq \bigg( \frac{c_{1} ( q^{+} - q^{-} )}{p^{-} - q^{+}}\, \bigg)^{\frac{1}{p^{-} - q^{+}}}.
\end{equation}
Analogously:
\begin{align*}
0 = \langle \psi^{'}_{\lambda} ( u ), u \rangle &\leq \, p^{+} \int_{M} |\, \nabla u( x )\,|^{p( x )} \,\, dv_{g} ( x ) + q^{+} \int_{M}  \mu ( x ) \,|\, \nabla u( x ) \,|^{q( x )} \,\, dv_{g} ( x ) \\& \hspace*{0.2cm}- \lambda \, q^{-} \int_{M} |\, u( x ) \,|^{q( x )} \,\, dv_{g} ( x ) + p^{+} \, \bigg[ \, - \int_{M} |\, \nabla u( x ) \,|^{p( x )} \,\, dv_{g} ( x ) \\& \hspace*{0.2cm}- \int_{M}\mu ( x )\, |\, \nabla u( x )\,|^{q( x )} \,\, dv_{g} ( x ) + \lambda \int_{M} |\, u( x )\,|^{q( x )} \,\, dv_{g} ( x ) \\& \hspace*{0.2cm}+ \int_{M} f( x, u( x ) ) \,.\, u( x ) \,\, dv_{g} ( x ) \, \bigg] - \int_{M} F( x, u( x ) ) \,\, dv_{g} ( x )\\& \leq ( q^{+} - p^{+} ) \int_{M}\mu ( x ) \, |\, \nabla u ( x ) \, |^{q( x )} \,\, dv_{g} ( x ) \\& \hspace*{0.2cm}+ \lambda ( p^{+} - q^{-} ) \int_{M} |\, u( x )\,|^{q( x )} \,\, dv_{g} ( x ) + p^{+} \int_{M} f( x, u( x ) )\,.\, u( x ) \,\, dv_{g} ( x ) \\& \leq ( q^{+} - p^{+} ) \int_{M}\mu ( x ) \, |\, \nabla u ( x ) \, |^{q( x )} \,\, dv_{g} ( x ) \\& \hspace*{0.2cm}+ \lambda ( p^{+} - q^{-} ) \int_{M} |\, u( x )\,|^{q( x )} \,\, dv_{g} ( x ) + p^{+} \int_{M} | u( x ) |^{q( x )} \,\, dv_{g} ( x ).
\end{align*}
Then, 
\begin{align*}
\mu_{0} ( p^{+} - q^{+} ) \int_{M} | \nabla u( x ) |^{q( x )} \,\, dv_{g} ( x ) &\leq (p^{+} - q^{+} ) \int_{M} \mu ( x ) | \nabla u( x ) |^{q( x )} \,\, dv_{g} ( x ) \\& \leq \big[ \lambda ( p^{+} - q^{-} ) + p^{+} \big] \int_{M} | u( x ) |^{q( x )} \,\, dv_{g} ( x ).
\end{align*}
By \eqref{Said} and proposition \ref{prop61} we deduce that
$$ \frac{\mu_{0}}{D^{p^{+}} ( c + 1 )^{p^{+}}} \, ( p^{+} - q^{+} ) \, ||\, u\,||^{p^{-}} \leq \big[ \lambda \, ( p^{+} - q^{-} ) + p^{+} \big] \, ||\, u\,||^{q^{+}}. $$
Thus,
\begin{equation}\label{3.3}
||\, u\,|| \leq \bigg( \frac{D^{p^{+}} ( c + 1 )^{p^{+}} \big[ \lambda \, ( p^{+} - q^{-} ) + p^{+} \big]}{\mu_{0} \, ( p^{+} - q^{+} )} \, \bigg)^{\frac{1}{p^{-} - q^{+}}}.
\end{equation}
For $\lambda$ sufficiently small $ \big( \, \lambda < \frac{2 \,\mu_{0} \, ( p^{+} - q^{+} )}{D^{p^{+}} ( c + 1 )^{p^{+}} ( p^{+} - q^{-} )} \, -\, \frac{\mu_{0} \,c_{1} \, ( q^{+} - q^{-} ) ( p^{+} - q^{+} )}{D^{p^{+}} ( c + 1 )^{p^{+}} ( p^{+} - q^{-}) ( p^{-} - q^{-} )} - \frac{p^{+}}{p^{+} - q^{-}}\, \big) $, if we combining \eqref{1994}, \eqref{3.2} and \eqref{3.3} we find $ ||\, u\,|| < 1$ for $\mu_{0}$ sufficiently large, which contradicts our assumption. Consequently, we can conclude that there exists $ \lambda^{*} > 0$ such that $ \mathcal{N}_{\lambda}^{0} = \emptyset $ for any $ \lambda \in ( 0, \, \lambda^{*} ).$
\end{proof}
\begin{rem}
As a consequence of Lemma \ref{lem 3.3}, for $ 0 < \lambda < \lambda^{*}, $ we can write $ \mathcal{N}_{\lambda} = \mathcal{N}_{\lambda}^{+} \cup \mathcal{N}_{\lambda}^{-},$ and we define
$$ \theta_{\lambda}^{+} = \inf_{u \in \mathcal{N}_{\lambda}^{+}} J_{\lambda} ( u ), \hspace*{1cm} \theta_{\lambda}^{-} = \inf_{u \in \mathcal{N}_{\lambda}^{-}} J_{\lambda} ( u ).$$
\end{rem}
\begin{lemma}\label{lem3.5}
Suppose that $( f_{1} ) - ( f_{3} ) $ are true. If $ 0 < \lambda < \lambda^{*}$ with $\lambda^{*} > 0,$ then for all $ u \in \mathcal{N}_{\lambda}^{+} $ we have $ J_{\lambda} ( u ) < 0.$
\end{lemma}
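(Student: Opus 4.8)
The plan is to restrict $J_\lambda$ to the ray $\{tu:\,t\ge 0\}$ through $u$ and read off the sign of $J_\lambda(u)$ from the shape of the resulting fibering map. Fix $u\in\mathcal N_\lambda^{+}$ and set $\gamma_u(t)=J_\lambda(tu)$. Three facts will be used. First, $\gamma_u(0)=J_\lambda(0)=0$, since every term of $J_\lambda$ vanishes at $0$ and, by $(f_2)$, $F(x,0)=0$. Second, $\gamma_u'(1)=\langle J_\lambda'(u),u\rangle=0$, because $u\in\mathcal N_\lambda$; this is exactly identity \eqref{3.1}. Third, $\gamma_u''(1)=\langle\psi_\lambda'(u),u\rangle>0$, because $u\in\mathcal N_\lambda^{+}$: indeed $\psi_\lambda(tu)=\langle J_\lambda'(tu),tu\rangle=t\,\gamma_u'(t)$, hence $\langle\psi_\lambda'(u),u\rangle=\frac{d}{dt}\bigl(t\gamma_u'(t)\bigr)\big|_{t=1}=\gamma_u'(1)+\gamma_u''(1)=\gamma_u''(1)$.

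Consequently $t=1$ is a strict local minimum of $\gamma_u$, so $\gamma_u'<0$ on some interval $(1-\varepsilon,1)$ and $\gamma_u(1)<\gamma_u(1-\varepsilon)$. The key step is to upgrade this to: $\gamma_u$ is strictly decreasing on all of $(0,1)$. Once this is established the lemma follows at once, since then $J_\lambda(u)=\gamma_u(1)<\gamma_u(0)=0$; strictness is automatic, because $\gamma_u(1)=\gamma_u(0)$ would produce, by Rolle's theorem, a critical point of $\gamma_u$ inside $(0,1)$, contradicting strict monotonicity there.

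To obtain strict monotonicity on $(0,1)$ it suffices, by continuity together with $\gamma_u'<0$ near $t=1^{-}$, to show that $\gamma_u$ has no critical point in $(0,1)$, i.e. $tu\notin\mathcal N_\lambda$ for $t\in(0,1)$. I would analyse the equation $\gamma_u'(t)=0$, namely
\begin{align*}
\int_M t^{p(x)-1}\bigl(|\nabla u|^{p(x)}+|u|^{p(x)}\bigr)\,dv_g+\int_M \mu(x)\,t^{q(x)-1}|\nabla u|^{q(x)}\,dv_g &= \lambda\int_M t^{q(x)-1}|u|^{q(x)}\,dv_g \\
&\quad +\int_M f(x,tu)\,u\,dv_g .
\end{align*}
By $(f_3)$ the last integral is $o\bigl(\int_M t^{q(x)-1}|u|^{q(x)}\,dv_g\bigr)$ as $t\to0^{+}$, and by $(f_1)$ it is controlled from above as $t$ grows; feeding this, together with \eqref{Said}, Propositions \ref{prop61} and \ref{prop13}, and the exponent inequalities \eqref{1.1}--\eqref{1994}, into estimates of the same type used to establish $\mathcal N_\lambda^{0}=\emptyset$ (Lemma \ref{lem 3.3}), one excludes, for $0<\lambda<\lambda^{*}$, any critical point of $\gamma_u$ lying strictly between $0$ and the local minimum at $t=1$.

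The main obstacle is precisely this last structural step. In the constant–exponent concave–convex situation $\gamma_u'(t)$ is a monomial in $t$ times a unimodal function, which bounds the number of critical points of $\gamma_u$ by two and forces the local minimum to precede the local maximum; with genuinely variable $p(\cdot),q(\cdot)$ this factorisation is lost, so the shape of $\gamma_u$ has to be recovered by hand from $(f_1)$, $(f_3)$, \eqref{1.1}, \eqref{1994} and Lemma \ref{lem 3.3} — the content $\mathcal N_\lambda^{0}=\emptyset$ being exactly what guarantees that every interior critical point of $\gamma_u$ is a strict extremum, so that none of them can sit below $t=1$ without contradicting either the decreasing behaviour near $1^{-}$ or the non‑degeneracy. (A purely algebraic shortcut — taking a fixed linear combination of the constraints $\langle J_\lambda'(u),u\rangle=0$ and $\langle\psi_\lambda'(u),u\rangle>0$ — only yields a lower bound for $J_\lambda(u)$ and, because of the spread between $q^{-},q^{+},p^{-},p^{+}$, cannot produce the required upper bound; hence the fibering‑map analysis seems unavoidable.) With the monotonicity of $\gamma_u$ on $(0,1)$ in hand, $J_\lambda(u)<0$ is immediate.
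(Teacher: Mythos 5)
Your fibering-map setup is correct as far as it goes: $\gamma_u(0)=0$ by $(f_2)$, $\gamma_u'(1)=\langle J_\lambda'(u),u\rangle=0$ by \eqref{3.1}, and the identity $\langle\psi_\lambda'(u),u\rangle=\gamma_u''(1)$ is the standard computation, so $t=1$ is a strict local minimum of $\gamma_u$. But the entire conclusion rests on the claim that $\gamma_u$ has no critical point in $(0,1)$, and this is asserted rather than proved. The justification you offer — Lemma \ref{lem 3.3} — cannot supply it: $\mathcal{N}_\lambda^{0}=\emptyset$ only says that any interior critical point of $\gamma_u$ is non-degenerate; it does not exclude a (non-degenerate) local maximum at some $t_0\in(0,1)$, which is perfectly compatible with $\gamma_u$ decreasing on $(1-\varepsilon,1)$ and with everything else you establish, and in that configuration $\gamma_u(1)<\gamma_u(0)$ does not follow. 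What would actually close the gap is showing $\gamma_u'<0$ near $t=0^{+}$; but by $(f_3)$ the leading term of $\gamma_u'(t)$ as $t\to0^{+}$ is of order $t^{q(\cdot)-1}$ and its sign is governed by $\int_M\mu(x)|\nabla u|^{q(x)}\,dv_g-\lambda\int_M|u|^{q(x)}\,dv_g$, which the hypotheses do not control — this is exactly where the double-phase term $\mu(x)|\nabla u|^{q(x)}$, carrying the same exponent as the concave term, breaks the classical concave--convex monotonicity of $t\mapsto\gamma_u'(t)/t^{q-1}$ that you invoke as the model. So the proof is incomplete at its decisive step, and you acknowledge as much.

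It is also worth noting that the paper proves the lemma by precisely the ``purely algebraic'' route you dismiss as incapable of yielding an upper bound. Using \eqref{3.1} to eliminate the term $\lambda\int_M|u|^{q(x)}\,dv_g$ from the definition of $J_\lambda$ produces the \emph{upper} bound \eqref{O1}, $J_\lambda(u)\le -A\|u\|^{p^-}+B\|u\|^{q^+}$ with $A>0$ (the sign works out because $\tfrac{1}{p^-}-\tfrac{1}{q^+}<0$ under \eqref{1.1}); then adding the $\mathcal{N}_\lambda^{+}$ inequality \eqref{O2} to $-p^{+}$ times \eqref{3.1} gives $\|u\|^{p^-}<C\lambda\|u\|^{q^+}$ via \eqref{Said} and Proposition \ref{prop61}, and the two estimates are combined. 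Whether the paper's final combination is itself airtight is debatable (substituting an upper bound for $\|u\|^{p^-}$ into the negatively-weighted term reverses the inequality, and the closing ``for $\lambda$ sufficiently large'' sits uneasily with the hypothesis $\lambda<\lambda^{*}$), but the structure shows that the Nehari identity \emph{can} be used to produce an upper bound on $J_\lambda(u)$, contrary to your parenthetical claim. If you want to salvage your approach, the inequality $\lambda(p^{+}-q^{-})\int_M|u|^{q(x)}\,dv_g>(p^{+}-q^{+})\int_M\mu(x)|\nabla u|^{q(x)}\,dv_g$ extracted in the paper from membership in $\mathcal{N}_\lambda^{+}$ is the natural candidate for controlling the sign of $\gamma_u'$ near $t=0$, but the constants do not match exactly and that step still has to be written out.
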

\begin{proof}
Suppose $ u \in \mathcal{N}_{\lambda}^{+},$ from the definition of $ J_{\lambda},$ we have
\begin{align}\label{3.4}
J_{\lambda} ( u ) &\leq \frac{1}{p^{-}} \int_{M} |\, \nabla u( x ) \,|^{p( x )} \,\, dv_{g} ( x ) + \frac{1}{q^{-}} \int_{M} \mu ( x ) \,|\, \nabla u( x ) \,|^{q( x )} \,\, dv_{g} ( x ) \nonumber \\& \hspace*{0.2cm}- \frac{\lambda}{q^{+}} \int_{M} |\, u( x ) \,|^{q( x )} \,\, dv_{g} ( x ) + \frac{1}{p^{-}} \int_{M} |\, u( x )\,|^{p( x )} \,\, dv_{g} ( x ) \nonumber \\& \hspace*{0.2cm}- \int_{M} F ( x, u( x ) ) \,\, dv_{g} ( x ),
\end{align}
from \eqref{3.1} and \eqref{3.4} we have
\begin{align*}
J_{\lambda} ( u ) \leq&  \frac{1}{p^{-}} \int_{M} |\, \nabla u( x ) \,|^{p( x )} \,\, dv_{g} ( x ) + \frac{1}{q^{-}} \int_{M} \mu ( x ) \,|\, \nabla u( x )\,|^{p( x )} \,\, dv_{g} ( x )\\& - \frac{1}{q^{+}} \, \bigg[ \, \int_{M} |\, \nabla u( x )\,|^{p( x )} \,\, dv_{g} ( x ) + \int_{M} \mu ( x )\,|\, \nabla u( x ) \,|^{q ( x )} \,\, dv_{g} ( x ) \\& + \int_{M} |\, u( x ) \,|^{p( x )} \,\, dv_{g} ( x ) - \int_{M} f( x, u( x ) )\,.\, u( x ) \,\, dv_{g} ( x ) \, \bigg] \\&+ \frac{1}{p^{-}} \int_{M} |\, u( x ) \,|^{p( x )} \,\, dv_{g} ( x ) - \int_{M} F( x, u( x ) ) \,\, dv_{g} ( x ) \\& \leq \big( \frac{1}{p^{-}} - \frac{1}{q^{+}} \big) \int_{M} | \nabla u( x ) |^{p( x )} \,\, dv_{g} ( x ) + \big( \frac{1}{q^{-}} - \frac{1}{q^{+}} \big) \int_{M} \mu ( x )\, | \nabla u( x ) |^{q( x )} \,\, dv_{g} ( x ) \\& \hspace*{0.2cm} + \big( \frac{1}{p^{-}} - \frac{1}{q^{+}} \big) \int_{M} | u( x ) |^{p( x )} \,\, dv_{g} ( x ) + \frac{1}{q^{+}} \int_{M} f( x, u( x ) )\,.\, u( x ) \,\, dv_{g} (x)\\& \hspace*{0.2cm} - \int_{M} F( x, u( x ) )\,\, dv_{g} ( x ),
\end{align*}
using Poincaré inequality and Theorem \ref{theoremK}, we find that:
\begin{equation}\label{O1}
J_{\lambda} ( u ) \leq - \bigg( \frac{p^{-} - q^{+}}{p^{-} q^{+}} \bigg) \bigg( \frac{1}{c} + 1 \bigg) || u ||^{p^{-}} + \bigg[ c_{1} \bigg(\frac{q^{+} - q^{-}}{q^{-} q^{+}} \bigg) + \frac{1}{q^{+}} \bigg] \, || u ||^{q^{+}},
\end{equation}
where $c_{1}$ being the constant of the embedding Theorem \ref{theoremK}.\\
As $ u \in \mathcal{N}_{\lambda}^{+}$ we have
\begin{align}\label{O2}
&p^{+} \int_{M} | \nabla u( x ) |^{p( x )} \,\, dv_{g} ( x ) + q^{+} \int_{M} \mu ( x ) |\nabla u( x )|^{q( x )} \,\, dv_{g} ( x )\nonumber\\& - \lambda q^{-} \int_{M} | u( x ) |^{q( x )} \,\, dv_{g} ( x ) + p^{+} \int_{M} | u( x ) |^{p( x )} \,\, dv_{g} ( x ) - \int_{M} F( x, u( x ) ) \,\, dv_{g} ( x ) > 0, 
\end{align}
we multiply \eqref{3.1} by $( -p^{+} ),$ we obtain 
\begin{align}\label{O3}
&- p^{+} \int_{M} | \nabla u( x ) |^{p( x )} \,\, dv_{g} ( x ) - p^{+} \int_{M} \mu ( x ) | \nabla u( x ) |^{q( x )} \,\, dv_{g} ( x )\nonumber\\& + \lambda p^{+} \int_{M} | u( x ) |^{q( x )} \,\, dv_{g} ( x ) - p^{+} \int_{M} | u( x ) |^{p( x )} \,\,dv_{g} ( x )\nonumber \\&+ p^{+} \int_{M} f( x, u( x ) ) \,.\, u( x ) \,\, dv_{g} ( x ) = 0, 
\end{align}
we add \eqref{O2} to \eqref{O3}, then according to the fact that $\beta > p^{+}$ and by \eqref{1.1} we have $ p^{+} > \frac{1}{p^{+}} > \frac{1}{\beta}$. Therefore, we obtain that 
$$
 ( q^{+} - p^{+} ) \int_{M} \mu( x ) | \nabla u( x ) |^{q( x )} \,\, dv_{g} ( x ) + \lambda ( p^{+} - q^{-} ) \int_{M} | u( x ) |^{q( x )} \,\, dv_{g} ( x )  > 0.
$$
Then, by \eqref{Said} and Proposition \ref{prop61} we get
\begin{align*}
\lambda ( p^{+} - q^{-} ) \int_{M} | u( x ) |^{q( x )} \,\, dv_{g} ( x ) & > ( p^{+} - q^{+} ) \int_{M} \mu( x ) \, | \nabla u( x ) |^{q( x )} \,\, dv_{g} ( x ) \\& > \frac{\mu_{0} ( p^{+} - q^{+} )}{D^{p^{+}} ( c + 1 )^{p^{+}}} \int_{M} | u( x ) |^{p( x )} \,\, dv_{g} ( x ).
\end{align*}
Hence,
$$ || u ||^{p^{-}} < \frac{\lambda \, D^{p^{+}} ( c + 1 )^{p^{+}} ( p^{+} - q^{-} )}{\mu_{0} ( p^{+} - q^{+} )} \, || u ||^{q^{+}}. $$
According to \eqref{O1}, we get
\begin{align*}
J_{\lambda} ( u ) \leq \bigg[ &- \bigg( \frac{p^{-} - q^{+}}{p^{-} q^{+}} \bigg) \bigg( \frac{1}{c} + 1 \bigg) \,.\,\frac{\lambda \, D^{p^{+}} ( c + 1 )^{p^{+}} ( p^{+} - q^{-} )}{\mu_{0} ( p^{+} - q^{+} )}\\& + c_{1} \bigg(\frac{q^{+} - q^{-}}{q^{-} q^{+}} \bigg) + \frac{1}{q^{+}} \bigg]  || u ||^{q^{+}}.
\end{align*}
Finally, for $\lambda$ sufficiently large, we deduce that $ \theta_{\lambda}^{+} = \inf_{u \in \mathcal{N}_{\lambda}^{+}} J_{\lambda} ( u ) < 0.$
\end{proof}
\begin{lemma}\label{lem3.6}
Under assumptions $ ( f_{1} ) - ( f_{3} )$. If $ 0 < \lambda < \lambda^{**},$ then for all $ u \in \mathcal{N}_{\lambda}^{-} $ we have $ J_{\lambda} ( u ) > 0.$
\end{lemma}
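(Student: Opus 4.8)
The plan is to produce a lower bound for $J_\lambda$ that is \emph{positive} on $\mathcal{N}_\lambda^-$, dual to the argument of Lemma~\ref{lem3.5}. First I would reuse the estimate already established inside the proof of Lemma~\ref{lem 3.2}: for $u\in\mathcal{N}_\lambda$ with $\|u\|>1$,
$$
J_\lambda(u)\ \geq\ \frac{\mu_0}{D^{p^+}(c+1)^{p^+}}\Big(\frac{p^+-q^+}{p^+q^+}\Big)\|u\|^{p^-}\ -\ \lambda\Big(\frac{p^+-q^-}{p^+q^-}\Big)\|u\|^{q^+},
$$
together with its companion for $\|u\|\le1$ (obtained by using Proposition~\ref{prop13}(ii) and Proposition~\ref{proposition2.19}(2) instead of their parts (iii) and (3)), of the form $J_\lambda(u)\ge A'\|u\|^{p^+}-\lambda B'\|u\|^{q^-}$ with $A',B'>0$. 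Since $p^->q^+$ and $p^+>q^-$ by \eqref{1.1}, in either regime $J_\lambda(u)>0$ as soon as $\|u\|$ exceeds an explicit threshold $(\lambda\kappa)^{1/(p^--q^+)}$, respectively $(\lambda\kappa')^{1/(p^+-q^-)}$, and both thresholds go to $0$ as $\lambda\to0^{+}$.

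The core of the proof is a uniform lower bound $\|u\|\ge\delta_0>0$ for all $u\in\mathcal{N}_\lambda^-$. I would obtain it by showing that $\langle\psi_\lambda'(v),v\rangle>0$ for every small $v\ne0$ once $\lambda$ is below some threshold $\lambda_1>0$: writing out $\langle\psi_\lambda'(v),v\rangle$, using the Nehari identity \eqref{3.1} to eliminate $\lambda\rho_{q(\cdot)}(v)$ exactly as in the passage \eqref{O2}--\eqref{O3} of Lemma~\ref{lem3.5}, and using $(f_1)$ and $(f_3)$ to absorb the $f$-terms, the expression is, for $\|v\|$ small, dominated by the $q(\cdot)$-order contribution of the $\mu(\cdot)|\nabla v|^{q(\cdot)}$ part, which the Poincaré inequality, \eqref{Said}, Proposition~\ref{prop61} and Theorem~\ref{theoremK} turn into a positive quantity for $\lambda<\lambda_1$. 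Hence $\mathcal{N}_\lambda^-$ stays outside a fixed ball around the origin, which gives $\delta_0$.

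Finally I would fix $\lambda^{**}\in(0,\min\{\lambda^{*},\lambda_1\})$ so small that $\lambda^{**}\kappa<\delta_0^{p^--q^+}$ and $\lambda^{**}\kappa'<\delta_0^{p^+-q^-}$. Then for $0<\lambda<\lambda^{**}$ every $u\in\mathcal{N}_\lambda^-$ has $\|u\|\ge\delta_0$ strictly above the relevant threshold, so the bounds of the first step force $J_\lambda(u)>0$, as claimed. I expect the uniform lower bound $\|u\|\ge\delta_0$ on $\mathcal{N}_\lambda^-$ to be the real obstacle: because the $p(\cdot)$- and $q(\cdot)$-operators are non-homogeneous, the modulars $\rho_{p(\cdot)}$, $\rho_{q(\cdot)}$ and $\rho_{q(\cdot),\mu(\cdot)}$ sit on different spaces with different embedding constants and scale with different exponents according to whether $\|u\|>1$ or $\|u\|\le1$, so the ``away from zero'' estimate and the bookkeeping of which $\lambda$-threshold binds must be done with care; the remaining steps are routine rearrangements of inequalities already used in Lemmas~\ref{lem 3.2}--\ref{lem3.5}.
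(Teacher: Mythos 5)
Your strategy is sound and reaches the right conclusion, but it is a genuinely different (and longer) route than the paper's. The paper's proof is a single direct computation: for $u\in\mathcal{N}_{\lambda}^{-}$ it only uses the Nehari identity \eqref{3.1} to substitute $\rho_{p(\cdot)}(u)$ into $J_{\lambda}(u)$, drops the $f$-terms via $(f_{1})$ (since $\beta>p^{+}$), and arrives via \eqref{Said} at
$$ J_{\lambda}(u)\ \geq\ \frac{\mu_{0}}{D^{p^{+}}(c+1)^{p^{+}}}\Big(\tfrac{1}{q^{+}}-\tfrac{1}{p^{+}}\Big)\|u\|^{p^{-}}+\lambda\Big(\tfrac{1}{p^{+}}-\tfrac{1}{q^{-}}\Big)\|u\|^{q^{+}}, $$
then absorbs the negative term using $\|u\|^{q^{+}}\leq\|u\|^{p^{-}}$ to get $J_{\lambda}(u)\geq(A-\lambda B)\|u\|^{p^{-}}$, which is positive as soon as $\lambda<\lambda^{**}:=A/B=\frac{\mu_{0}q^{-}(p^{+}-q^{+})}{D^{p^{+}}(c+1)^{p^{+}}q^{+}(p^{+}-q^{-})}$. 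In particular the paper never needs, and never proves, a uniform lower bound $\|u\|\geq\delta_{0}$ on $\mathcal{N}_{\lambda}^{-}$, and in fact never invokes the defining inequality $\langle\psi_{\lambda}'(u),u\rangle<0$ at all --- the smallness of $\lambda$ alone makes the coefficient positive. Your two extra ingredients (the $\lambda$-dependent norm threshold from the Lemma~\ref{lem 3.2}-type estimate, and the ``$\mathcal{N}_{\lambda}^{-}$ stays outside a fixed ball'' step) buy you something real: the paper's absorption step $\|u\|^{q^{+}}\leq\|u\|^{p^{-}}$ is only valid for $\|u\|\geq1$, so the published argument silently ignores the small-norm regime, which is exactly the regime your $\delta_{0}$ is designed to control. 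The price is that your step 2 is the delicate one: absorbing the $f$-terms near the origin via $(f_{3})$ really requires a subcritical growth bound of the form $|f(x,s)|\leq\varepsilon|s|^{q(x)-1}+C_{\varepsilon}|s|^{r(x)-1}$ with $q\ll r\ll p^{*}$ (not literally among $(f_{1})$--$(f_{3})$), and the resulting $\delta_{0}$ must be checked to be uniform in $\lambda$ below your $\lambda_{1}$ before the final choice of $\lambda^{**}$ can be made. If you only want to match the statement as the paper proves it, the short substitution argument above suffices; your version is the one to keep if you want the lemma to hold without the hidden assumption $\|u\|\geq1$.
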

\begin{proof}
Let $ u \in \mathcal{N}_{\lambda}^{-}.$ By \eqref{1.1}, $( f_{1} ),$ \eqref{3.1} and the definition of $J_{\lambda},$ we find that
\begin{align*}
J_{\lambda} ( u ) &\geq \frac{1}{p^{+}} \int_{M} |\, \nabla u( x )\,|^{p( x )} \,\, dv_{g} ( x ) + \frac{1}{q^{+}} \int_{M} \mu ( x )\, |\, \nabla u( x )\,|^{q( x )} \,\, dv_{g} ( x ) \\& \hspace*{0.2cm}- \frac{\lambda}{q^{-}} \int_{M} |\, u( x )\,|^{q( x )} \,\, dv_{g} ( x )  + \frac{1}{p^{+}} \bigg[ \, - \int_{M} |\, \nabla u( x ) \,|^{p( x )} \,\, dv_{g} ( x ) \\& \hspace*{0.2cm}-  \int_{M} \mu ( x ) \,|\, \nabla u( x )\,|^{q( x )} \,\, dv_{g} ( x ) + \lambda \int_{M} |\, u( x ) \,|^{q( x )} \,\, dv_{g} ( x ) \\& \hspace*{0.2cm}+ \int_{M} f( x, u( x ) ) \,.\, u( x ) \,\, dv_{g} ( x ) \, \bigg] - \int_{M} F( x, u( x ) ) \,\, dv_{g} ( x ) \\& \geq \big( \frac{1}{q^{+}} - \frac{1}{p^{+}} \big) \int_{M} \mu ( x )  \,|\, \nabla u( x ) \,|^{q( x )} \,\, dv_{g} ( x ) \\& \hspace*{0.2cm}+ \lambda \, \big( \frac{1}{p^{+}} - \frac{1}{q^{-}} \big) \int_{M} |\, u( x )\,|^{q( x )} \,\, dv_{g} ( x ) + \frac{1}{p^{+}} \int_{M} f( x, u( x ) ) \,.\, u( x ) \,\, dv_{g} ( x ) \\& \hspace*{0.2cm}- \int_{M} F( x, u( x ) ) \,\, dv_{g} ( x ) \\& \geq \big( \frac{1}{q^{+}} - \frac{1}{p^{+}} \big) \int_{M} \mu ( x )  \,|\, \nabla u( x ) \,|^{q( x )} \,\, dv_{g} ( x ) \\& \hspace*{0.2cm}+ \lambda \, \big( \frac{1}{p^{+}} - \frac{1}{q^{-}} \big) \int_{M} |\, u( x )\,|^{q( x )} \,\, dv_{g} ( x ) + \frac{1}{\beta} \int_{M} f( x, u( x ) ) \,.\, u( x ) \,\, dv_{g} ( x ) \\& \hspace*{0.2cm}- \int_{M} F( x, u( x ) ) \,\, dv_{g} ( x ) \\& \,\, \mbox{(since $\beta > p^{+},$ then $\frac{1}{p^{+}} > \frac{1}{\beta} $ and by $( f_{1} )$ we get the following inequality)} \\& \geq \mu_{0} \, \big( \frac{1}{q^{+}} - \frac{1}{p^{+}} \big) \int_{M} |\, \nabla u( x )\,|^{q( x )} \,\, dv_{g} ( x ) \\& \hspace*{0.2cm}+ \lambda \, \big( \frac{1}{p^{+}} - \frac{1}{q^{-}} \big) \int_{M} |\, u( x ) \,|^{q( x )} \,\, dv_{g} ( x ),
\end{align*}
according to \eqref{Said} we deduce that
$$ J_{\lambda} ( u ) \geq \frac{\mu_{0}}{D^{p^{+}} ( c + 1 )^{p^{+}}} \big( \frac{1}{q^{+}} - \frac{1}{p^{+}} \big) \,||\, u\,||^{p^{-}} + \lambda \, \big( \frac{1}{p^{+}} - \frac{1}{q^{-}} \big) \,||\, u\,||^{q^{+}}.$$
Since, $p^{-} > q^{+}$ we have
$$ J_{\lambda} ( u ) \geq \bigg( \frac{\mu_{0}}{D^{p^{+}} ( c + 1 )^{p^{+}}} \,.\, \frac{p^{+} - q^{+}}{q^{+}\,p^{+}} + \lambda \, \frac{q^{-} - p^{+}}{p^{+}\,q^{-}} \, \bigg) \, ||\, u\,||^{p^{-}}.$$
Thus, if we choose $ \lambda < \frac{\mu_{0} \, q^{-} \, ( p^{+} - q^{+} )}{D^{p^{+}} ( c + 1 )^{p^{+}} q^{+} \, ( p^{+} - q^{-} )} = \lambda^{**},$ we deduce that $ J_{\lambda} ( u ) > 0.$ \\
It follows that $ \theta_{\lambda}^{-} = \inf_{u \in \mathcal{N}_{\lambda}^{-}} J_{\lambda} ( u ) > 0.$
\end{proof}
Hence, $ \mathcal{N}_{\lambda} = \mathcal{N}_{\lambda}^{+} \cup \mathcal{N}_{\lambda}^{-}$ and $ \mathcal{N}_{\lambda}^{+} \cap \mathcal{N}_{\lambda}^{-} = \emptyset,$ by above Lemma, we must have $ u \in \mathcal{N}_{\lambda}^{-}.$

\section{Existence of non-negative solutions:} \label{section4}
In this section, we prove the existence of two non-negative solutions of problem $ ( \mathcal{P} ). $ For this, we first show the existence of minimizers in $ \mathcal{N}_{\lambda}^{+} $ and $ \mathcal{N}_{\lambda}^{-}$ for all $ \lambda \in ( 0, \, \bar{\lambda} ),$ where $ \bar{\lambda} = \min \, \{\, \lambda^{*}, \, \lambda^{**} \, \}.$
\begin{theorem}\label{theorem 4.1}
Suppose that $ ( f_{1} ) - ( f_{3} ) $ are true, then for all $ \lambda \in ( 0, \, \lambda^{*} ),$ there exists a minimizer $ u_{0}^{+}$ of $J_{\lambda} ( u )$ on $ \mathcal{N}_{\lambda}^{+}$ such that $ J_{\lambda} ( u_{0}^{+} ) = \theta_{\lambda}^{+}.$
\end{theorem}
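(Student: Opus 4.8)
\emph{Proof proposal.} The plan is to apply the direct method of the calculus of variations to $J_\lambda$ restricted to the constraint $\mathcal{N}_\lambda^+$. Fix $\lambda\in(0,\lambda^*)$; then $\mathcal{N}_\lambda^0=\emptyset$ by Lemma~\ref{lem 3.3}, and $J_\lambda$ is negative on $\mathcal{N}_\lambda^+$ by Lemma~\ref{lem3.5}, so $\theta_\lambda^+=\inf_{\mathcal{N}_\lambda^+}J_\lambda<0$. Let $\{u_k\}\subset\mathcal{N}_\lambda^+$ be a minimizing sequence, $J_\lambda(u_k)\to\theta_\lambda^+$; by a standard application of Ekeland's variational principle to the minimization of $J_\lambda$ on the constraint manifold $\mathcal{N}_\lambda$, we may in addition take $\{u_k\}$ to be a Palais--Smale sequence, i.e.\ $J_\lambda'(u_k)-\nu_k\psi_\lambda'(u_k)\to0$ in $(W_0^{1,q(x)}(M))^{*}$ for suitable Lagrange multipliers $\nu_k$. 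Since $J_\lambda$ is coercive on $\mathcal{N}_\lambda$ (Lemma~\ref{lem 3.2}), $\{u_k\}$ is bounded in $W_0^{1,q(x)}(M)$, so along a subsequence $u_k\rightharpoonup u_0^+$ weakly in $W_0^{1,q(x)}(M)$, and by the compact embeddings $W^{1,q(x)}(M)\hookrightarrow L^{p(x)}(M)$, $W^{1,q(x)}(M)\hookrightarrow L^{q(x)}(M)$ (Theorem~\ref{theo1}) and $W^{1,q(x)}(M)\hookrightarrow L_{\mu(x)}^{q(x)}(M)$ (Theorem~\ref{theoremK}), up to a further subsequence $u_k\to u_0^+$ strongly in these spaces and a.e.\ on $M$.

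Using $(f_1)$--$(f_3)$ to control the growth of $f$ and $F$, together with the strong $L^{p(x)}$- and $L^{q(x)}$-convergence and Propositions~\ref{prop13} and \ref{proposition2.19}, the perturbation terms pass to the limit:
\[
\int_M f(x,u_k)u_k\,dv_g\to\int_M f(x,u_0^+)u_0^+\,dv_g,\qquad\int_M F(x,u_k)\,dv_g\to\int_M F(x,u_0^+)\,dv_g,
\]
and likewise $\rho_{p(\cdot)}(u_k)\to\rho_{p(\cdot)}(u_0^+)$, $\rho_{q(\cdot)}(u_k)\to\rho_{q(\cdot)}(u_0^+)$ and $\rho_{q(\cdot),\mu(\cdot)}(u_k)\to\rho_{q(\cdot),\mu(\cdot)}(u_0^+)$. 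Since $J_\lambda(0)=0>\theta_\lambda^+=\lim_kJ_\lambda(u_k)$, the weak limit $u_0^+$ is nontrivial.

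The crux of the argument --- and the step I expect to be the main obstacle --- is to prove that $u_0^+\in\mathcal{N}_\lambda^+$. First, because $\mathcal{N}_\lambda^0=\emptyset$, the quantity $\langle\psi_\lambda'(u_k),u_k\rangle$ stays bounded away from $0$ along the bounded sequence $\{u_k\}$; combined with $\langle J_\lambda'(u_k),u_k\rangle=0$ (as $u_k\in\mathcal{N}_\lambda$) this forces $\nu_k\to0$, hence $J_\lambda'(u_k)\to0$ in $(W_0^{1,q(x)}(M))^{*}$. Writing the double-phase operator as $Au=-\Delta_{p(x)}u-\mathrm{div}\big(\mu(x)|\nabla u|^{q(x)-2}\nabla u\big)$, we then obtain $\langle Au_k,u_k-u_0^+\rangle=\langle J_\lambda'(u_k),u_k-u_0^+\rangle+\langle B(u_k),u_k-u_0^+\rangle\to0$, where $B$ gathers the lower-order terms, which converge by the previous paragraph; the $(S_+)$-property of $A$ on $W_0^{1,q(x)}(M)$ then upgrades the weak convergence to $u_k\to u_0^+$ strongly. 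Passing to the limit in the Nehari identity \eqref{3.1} gives $\langle J_\lambda'(u_0^+),u_0^+\rangle=0$, i.e.\ $u_0^+\in\mathcal{N}_\lambda$, and passing to the limit in $\langle\psi_\lambda'(u_k),u_k\rangle>0$ gives $\langle\psi_\lambda'(u_0^+),u_0^+\rangle\ge0$, which, since $\mathcal{N}_\lambda^0=\emptyset$, means $\langle\psi_\lambda'(u_0^+),u_0^+\rangle>0$, so $u_0^+\in\mathcal{N}_\lambda^+$. Consequently $\theta_\lambda^+\le J_\lambda(u_0^+)=\lim_kJ_\lambda(u_k)=\theta_\lambda^+$, whence $J_\lambda(u_0^+)=\theta_\lambda^+$. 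Finally, replacing $u_0^+$ by $|u_0^+|$ (which leaves $|\nabla u_0^+|$, hence the leading modulars, unchanged, and, by $(f_2)$ together with the structure of $F$, leaves the energy and the Nehari constraint unchanged) produces a non-negative minimizer. The technical load concentrates in the verification of the $(S_+)$-property and of the modular continuity of the perturbation terms in this variable-exponent, weighted Riemannian setting, where the non-homogeneity of the $p(x)$- and $q(x)$-Laplacians precludes scaling arguments and the competing signs of $+\int_M\frac{1}{p(x)}|u|^{p(x)}\,dv_g$ and $-\int_M F(x,u)\,dv_g$ must be tracked with care.
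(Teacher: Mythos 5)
Your argument is sound in outline but follows a genuinely different route from the paper's. The paper does \emph{not} use Ekeland's variational principle, Lagrange multipliers, or the $(S_+)$-property of the double-phase operator. It takes a plain minimizing sequence $\{u_n^+\}\subset\mathcal N_\lambda^+$, extracts a weak limit $u_0^+$ using coercivity (Lemma~\ref{lem 3.2}) and the compact embedding of Theorem~\ref{theo1}, and then forces strong convergence by contradiction: if $u_n^+\not\to u_0^+$ strongly, weak lower semicontinuity gives a strict inequality $\rho_{q(\cdot)}(u_0^+)<\liminf_n\rho_{q(\cdot)}(u_n^+)$, and feeding the Nehari identity \eqref{3.1} into the same estimate as in Lemma~\ref{lem3.6} yields $\theta_\lambda^+\geq$ a positive quantity, contradicting $\theta_\lambda^+<0$ from Lemma~\ref{lem3.5}. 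So the whole proof rests on the sign dichotomy between $\mathcal N_\lambda^+$ and $\mathcal N_\lambda^-$ rather than on operator monotonicity. What your route buys is a genuine verification that the limit $u_0^+$ is nontrivial, lies in $\mathcal N_\lambda^+$, and is a critical point of $J_\lambda$ --- points the paper leaves implicit; what it costs is two substantial ingredients you assert but do not prove: (i) the $(S_+)$-property of $u\mapsto-\Delta_{p(x)}u-\operatorname{div}(\mu(x)|\nabla u|^{q(x)-2}\nabla u)$ on $W_0^{1,q(x)}(M)$ in this Riemannian variable-exponent setting, and (ii) the claim that $\langle\psi_\lambda'(u_k),u_k\rangle$ is bounded away from $0$, which does not follow from the pointwise statement $\mathcal N_\lambda^0=\emptyset$ alone and must instead be extracted from the quantitative bounds \eqref{3.2}--\eqref{3.3} in the proof of Lemma~\ref{lem 3.3}. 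Both are standard but nontrivial, and you correctly flag them as the technical load. Finally, your closing replacement of $u_0^+$ by $|u_0^+|$ is not justified for a general Carath\'eodory $f$ satisfying only $(f_1)$--$(f_3)$ (there is no evenness assumption, so $F(x,|u|)\neq F(x,u)$ in general); fortunately it is also unnecessary, since the statement of Theorem~\ref{theorem 4.1} asks only for a minimizer, and non-negativity is handled separately in the paper by a truncation argument in Theorem~\ref{theorem4.3}.
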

\begin{proof}
From Lemma \ref{lem 3.2}, $ J_{\lambda} $ is bounded below on $ \mathcal{N}_{\lambda},$ in particular is bounded below on $ \mathcal{N}_{\lambda}^{+}.$ Then there exists a minimizing sequence $\{\, u_{n}^{+} \,\} \subset \mathcal{N}_{\lambda}^{+}$ such that
$$ \lim_{n \rightarrow + \infty} J_{\lambda} ( u_{n}^{+} ) = \inf_{u \in \mathcal{N}^{+}} J_{\lambda} ( u ) = \theta_{\lambda}^{+} < 0.$$
Since, $J_{\lambda} $ is coercive, $\{\, u_{n}^{+} \,\}$ is bounded in $ W^{1, q( x )}_{0} ( M ).$ Hence we assume that, without loss generality, $ u_{n}^{+} \rightharpoonup u_{0}^{+}$ in $ W^{1, q( x )}_{0} ( M )$ and by the compact embedding ( Theorem \ref{theo1} ) we have
\begin{equation}\label{4.1}
u_{n}^{+} \longrightarrow u_{0}^{+} \,\, \mbox{in} \,\, L^{p( x )} ( M ).
\end{equation}
Now, we shall prove $ u_{n}^{+} \longrightarrow u_{0}^{+} $ in $ W^{1, q( x )}_{0} ( M ).$ Otherwise, let $ u_{n}^{+} \not \to u_{0}^{+}$ in $ W^{1, q( x )}_{0} ( M ).$ Then, we have
\begin{equation}\label{4.2}
\rho_{q( . )} ( u_{0}^{+} ) < \lim_{n \rightarrow + \infty} \inf \, \rho_{q( . )} ( u_{n}^{+} ),
\end{equation}
using \eqref{4.1} we obtain
$$ \int_{M} |\, u_{0}^{+} \,|^{p( x )} \,\, dv_{g} ( x ) = \lim_{n \rightarrow + \infty} \inf \, \int_{M} |\, u_{n}^{+} \,|^{p( x )} \,\, dv_{g} ( x ),$$
since $ \langle J^{'}_{\lambda} ( u_{n}^{+} ), \, u_{n}^{+} \rangle = 0,$ and using the same technique as in Lemma \ref{lem3.6}, we get by \eqref{Said} that
$$ J_{\lambda} ( u_{n}^{+} ) \geq \frac{\mu_{0}}{D^{p^{+}} ( c + 1 )^{p^{+}}} \, \big( \frac{1}{q^{+}} - \frac{1}{p^{+}} \big) \, \rho_{p( . )} ( u_{n}^{+} ) + \lambda \, \big( \frac{1}{p^{+}} - \frac{1}{q^{-}} \big) \, \rho_{q( . )} ( u_{n} ^{+} ).$$
That is
\begin{align*}
\lim_{n \rightarrow + \infty}  J_{\lambda} ( u_{n}^{+} ) \geq & \,\frac{\mu_{0}}{D^{p^{+}} ( c + 1 )^{p^{+}}} \, \big( \frac{1}{q^{+}} - \frac{1}{p^{+}} \big) \, \lim_{n \rightarrow + \infty} \, \rho_{p( . )} ( u_{n}^{+} )\\&  + \lambda \, \big( \frac{1}{p^{+}} - \frac{1}{q^{-}} \big) \, \lim_{n \rightarrow + \infty} \rho_{q( . )} ( u_{n} ^{+} ).
\end{align*}
By \eqref{4.1} and \eqref{4.2}, we have
$$ \theta_{\lambda}^{+} > \frac{\mu_{0}}{D^{p^{+}} ( c + 1 )^{p^{+}}} \, \big( \frac{1}{q^{+}} - \frac{1}{p^{+}} \big) \, ||\, u_{0}^{+} \, ||^{p^{-}} + \lambda \, \big( \frac{1}{p^{+}} - \frac{1}{q^{-}} \big) \, ||\, u_{0}^{+} \,||^{q^{+}},$$
since $ p^{-} > q^{+},$ for $ ||\, u_{0}^{+} \,|| > 1,$ we deduce
$$ \theta_{\lambda}^{+} = \inf_{u \,\in \,\mathcal{N}_{\lambda}^{+}} J_{\lambda} ( u ) > 0,$$
which is a contradiction with Lemma \ref{lem3.5}. Hence
$$ u_{n}^{+} \longrightarrow u_{0}^{+} \,\, \mbox{in} \,\, W_{0}^{1, q( x )} ( M ),$$
and $$ \lim_{n \rightarrow + \infty} J_{\lambda} ( u_{n}^{+} ) = J_{\lambda} ( u_{0}^{+} ) = \theta_{\lambda}^{+}.$$
Consequently, $u_{0}^{+}$ is a minimizer of $ J_{\lambda} $ on $ \mathcal{N}_{\lambda}^{+}.$
\end{proof}
\begin{theorem}\label{theorem4.2}
Suppose that conditions $ ( f_{1} ) - ( f_{3} ) $ are true, and for all $ \lambda \in ( 0, \, \lambda^{**} ),$ there exists a minimizer $u_{0}^{-}$ of $ J_{\lambda}$ on $ \mathcal{N}_{\lambda}^{-} $ such that $ J_{\lambda} ( u_{0}^{-} ) = \theta_{\lambda}^{-}.$
\end{theorem}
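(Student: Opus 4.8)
The plan is to run the scheme of Theorem~\ref{theorem 4.1} on $\mathcal{N}_\lambda^{-}$, using Lemma~\ref{lem3.6} in place of Lemma~\ref{lem3.5}. Fix $0<\lambda<\lambda^{**}$. By Lemma~\ref{lem 3.2}, $J_\lambda$ is coercive and bounded below on $\mathcal{N}_\lambda$, hence on $\mathcal{N}_\lambda^{-}$, and by Lemma~\ref{lem3.6} we have $\theta_\lambda^{-}=\inf_{\mathcal{N}_\lambda^{-}}J_\lambda>0$. Take a minimizing sequence $\{u_n^{-}\}\subset\mathcal{N}_\lambda^{-}$ with $J_\lambda(u_n^{-})\to\theta_\lambda^{-}$; coercivity makes it bounded in $W_0^{1,q(x)}(M)$, so, up to a subsequence, $u_n^{-}\rightharpoonup u_0^{-}$ weakly in $W_0^{1,q(x)}(M)$ and, by the compact embeddings of Theorem~\ref{theo1} and Theorem~\ref{theoremK},
\[
u_n^{-}\longrightarrow u_0^{-}\quad\text{in}\quad L^{p(x)}(M),\ L^{q(x)}(M)\ \text{and}\ L^{q(x)}_{\mu(x)}(M).
\]
Under the standing hypotheses on $f$, the functionals $v\mapsto\int_M f(x,v)\,v\,dv_g$ and $v\mapsto\int_M F(x,v)\,dv_g$ are sequentially weakly continuous along this sequence, so the corresponding terms for $u_n^{-}$ converge to those for $u_0^{-}$.

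The first point to settle, which does not arise for $\mathcal{N}_\lambda^{+}$, is that $u_0^{-}\neq 0$. If $u_0^{-}=0$, then every lower-order and nonlinear term in the Nehari identity~\eqref{3.1} written for $u_n^{-}$ tends to $0$, forcing $\rho_{p(\cdot)}(|\nabla u_n^{-}|)+\rho_{q(\cdot),\mu(\cdot)}(|\nabla u_n^{-}|)\to 0$; since $\mu(x)>\mu_0>0$, Proposition~\ref{prop13} and Poincaré's inequality then give $u_n^{-}\to 0$ in $W_0^{1,q(x)}(M)$, whence $J_\lambda(u_n^{-})\to J_\lambda(0)=0$, contradicting $\theta_\lambda^{-}>0$. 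Hence $u_0^{-}\neq 0$.

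Next I would pass to the limit in~\eqref{3.1}. Weak lower semicontinuity of the convex modulars $v\mapsto\rho_{p(\cdot)}(|\nabla v|)$ and $v\mapsto\rho_{q(\cdot),\mu(\cdot)}(|\nabla v|)$, together with the convergences above, yields $\langle J_\lambda'(u_0^{-}),u_0^{-}\rangle\le 0$. If equality holds, then $\rho_{p(\cdot)}(|\nabla u_n^{-}|)+\rho_{q(\cdot),\mu(\cdot)}(|\nabla u_n^{-}|)$ converges to its value at $u_0^{-}$; as each summand is weakly lower semicontinuous, each converges separately, and a standard monotonicity/uniform convexity argument (Proposition~\ref{prop13}, Proposition~\ref{proposition2.19}) gives $u_n^{-}\to u_0^{-}$ strongly in $W_0^{1,q(x)}(M)$; then $u_0^{-}\in\mathcal{N}_\lambda$, $\langle\psi_\lambda'(u_0^{-}),u_0^{-}\rangle=\lim_n\langle\psi_\lambda'(u_n^{-}),u_n^{-}\rangle\le 0$, and since $u_0^{-}\neq 0$ and $\mathcal{N}_\lambda^{0}=\emptyset$ (Lemma~\ref{lem 3.3}) this limit is strictly negative, so $u_0^{-}\in\mathcal{N}_\lambda^{-}$ and $J_\lambda(u_0^{-})=\lim_n J_\lambda(u_n^{-})=\theta_\lambda^{-}$. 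If instead $\langle J_\lambda'(u_0^{-}),u_0^{-}\rangle<0$, I would project $u_0^{-}$ onto $\mathcal{N}_\lambda^{-}$: analyzing the fibering map $\varphi(t):=J_\lambda(t\,u_0^{-})$ — which satisfies $\varphi(0)=0$ and, by the Ambrosetti--Rabinowitz condition $(f_1)$, $\varphi(t)\to-\infty$ as $t\to+\infty$, and whose derivatives are controlled by~\eqref{1.1}, \eqref{1994}, $(f_3)$, \eqref{Said} and Proposition~\ref{prop61} for $\lambda$ small — one finds $t^{-}>0$ with $t^{-}u_0^{-}\in\mathcal{N}_\lambda^{-}$; since each $u_n^{-}\in\mathcal{N}_\lambda^{-}$ realizes the maximum of its own fibering map, $J_\lambda(t^{-}u_0^{-})=\varphi(t^{-})\le\liminf_n J_\lambda(t^{-}u_n^{-})\le\liminf_n J_\lambda(u_n^{-})=\theta_\lambda^{-}$, while $t^{-}u_0^{-}\in\mathcal{N}_\lambda^{-}$ forces $J_\lambda(t^{-}u_0^{-})\ge\theta_\lambda^{-}$; relabeling $t^{-}u_0^{-}$ as $u_0^{-}$ gives the required minimizer with $J_\lambda(u_0^{-})=\theta_\lambda^{-}$.

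The main obstacle is this last projection step. Unlike in Theorem~\ref{theorem 4.1}, where $\theta_\lambda^{+}<0$ produced the contradiction at once, here $\theta_\lambda^{-}>0$, so one genuinely needs the geometry of the fibering map $t\mapsto J_\lambda(tu)$. For the non-homogeneous double-phase $p(x)$--$q(x)$ operator this map is not a polynomial, and proving that it has a single interior maximum — so that the projection onto $\mathcal{N}_\lambda^{-}$ exists and is attained exactly where $J_\lambda$ is largest along the ray $\{tu\}$ — is where the exponent restrictions~\eqref{1.1}--\eqref{1994}, the smallness of $\lambda$ from Lemma~\ref{lem 3.3}, and the estimates~\eqref{Said}, \eqref{O1}, Proposition~\ref{prop13} and Proposition~\ref{prop61} must all be combined carefully.
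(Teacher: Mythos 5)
Your proposal follows essentially the same route as the paper's proof: a minimizing sequence in $\mathcal{N}_{\lambda}^{-}$, coercivity and the compact embedding of Theorem \ref{theo1}, weak lower semicontinuity/Fatou, and the projection of $u_{0}^{-}$ onto $\mathcal{N}_{\lambda}^{-}$ along the fibering map $t \mapsto J_{\lambda}(t\,u_{0}^{-})$ to force strong convergence and the identity $J_{\lambda}(u_{0}^{-}) = \theta_{\lambda}^{-}$. The step you single out as the main obstacle --- that $t = 1$ is the global maximum of the fibering map of each $u_{n}^{-} \in \mathcal{N}_{\lambda}^{-}$, so that $J_{\lambda}(t\,u_{n}^{-}) \leq J_{\lambda}(u_{n}^{-})$ --- is precisely the step the paper also asserts without proof (its inequality $\lim_{n} J_{\lambda}(t\,u_{n}^{-}) < \lim_{n} J_{\lambda}(u_{n}^{-})$), so you supply everything the paper does, and your extra checks (that $u_{0}^{-} \neq 0$, and the case analysis on the sign of $\langle J_{\lambda}'(u_{0}^{-}), u_{0}^{-}\rangle$) only make the argument more careful.
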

\begin{proof}
Since $ J_{\lambda} $ is bounded below on $ \mathcal{N}_{\lambda} $ and so on $ \mathcal{N}_{\lambda}^{-}.$ Then, there exists a minimizing sequence $ \{\, u_{n}^{-}\,\} \subseteq \mathcal{N}_{\lambda}^{-} $ such that $$ \lim_{n \rightarrow + \infty} J_{\lambda} ( u_{n}^{-} ) = \inf_{u \in \mathcal{N}_{\lambda}^{-}} J_{\lambda} ( u ) = \theta_{\lambda}^{-} > 0.$$
As $ J_{\lambda} $ is coercive, $\{\, u_{n}^{-} \,\}$ is bounded in $ W_{0}^{1, q( x )} ( M ).$ Thus without loss of generality, we may assume that, $ u_{n}^{-} \rightharpoonup u_{0}^{-} $ in $ W_{0}^{1, q( x )} ( M )$ and by Theorem \ref{theo1} we have
\begin{equation}\label{4.3}
u_{n}^{-} \longrightarrow u_{0}^{-} \,\, \mbox{in} \,\, L^{p( x )} ( M ).
\end{equation}
On the other hand, if $ u_{0}^{-} \in \mathcal{N}_{\lambda}^{-},$ then there exists a constant $ t > 0 $ such that $ t\, u_{0}^{-} \in \mathcal{N}_{\lambda}^{-}$ and $ J_{\lambda} ( u_{0}^{-} ) \geq J_{\lambda} ( t \, u_{0}^{-} ).$ According to $( f_{1} )$ and the definition of $ \psi_{\lambda}^{'},$ we have
\begin{align*}
\langle \psi_{\lambda}^{'} ( t \, u_{0}^{-} ), \, t \, u_{0}^{-} \rangle & =  \,\int_{M} p( x ) \, |\, \nabla t u_{0}^{-} ( x )\,|^{p( x )} \,\, dv_{g} ( x ) \\& \hspace*{0.2cm}+ q( x ) \int_{M} \mu ( x ) \,|\, \nabla t u_{0}^{-} ( x ) \,|^{q( x )} \,\, dv_{g} ( x ) \\& \hspace*{0.2cm} - \lambda \, q( x ) \int_{M} |\, t u_{0}^{-} ( x ) \,|^{q( x )} \,\, dv_{g} ( x ) \\& \hspace*{0.2cm}+ p( x ) \int_{M} |\, t u_{0}^{-} ( x )\,|^{p( x )} \,\, dv_{g} ( x ) - \int_{M} F( x, \, t u_{0}^{-} ( x ) ) \,\, dv_{g} ( x ) \\& \leq p^{+} \, t^{p^{+}} \int_{M} |\, \nabla u_{0}^{-} \,|^{p( x )} \,\, dv_{g} ( x ) \\& \hspace*{0.2cm} + \, q^{+} \, t^{q^{+}} \int_{M} \mu ( x ) |\, \nabla u_{0}^{-} \,|^{q( x )} \,\, dv_{g} ( x ) \\& \hspace*{0.2cm}- \lambda \, q^{-} \, t^{q^{-}} \int_{M} |\, u_{0} ^{-} ( x )\,|^{q( x )} \,\, dv_{g} ( x ) \\& \hspace*{0.2cm}+ p^{+} \, t^{p^{+}} \int_{M} |\, u_{0}^{-} ( x ) \,|^{p( x )} \,\, dv_{g} ( x ).
\end{align*}
Since $ q^{-} \leq q^{+} < p^{+},$ and by \eqref{Said}, propositions \ref{prop61} and \ref{prop13}, it follows that $ \langle \psi_{\lambda}^{'} ( t \, u_{0}^{-}, \, t\, u_{0}^{-} \rangle < 0.$ Hence by the definition of $ \mathcal{N}_{\lambda}^{-}, \,\, t\, u_{0}^{-} \in \mathcal{N}_{\lambda}^{-}.$ \\
Next, we show that $ u_{n}^{-} \longrightarrow u_{0}^{-} $ in $ W_{0}^{1, q( x )} ( M ).$ Otherwise, suppose $ u_{n}^{-} \not \to u_{0}^{-}$ in $ W_{0}^{1, q( x )} ( M ).$ Then by Fatou's Lemma we have
$$ \int_{M} \mu ( x ) \,|\, \nabla u_{0}^{-} ( x )\, |^{q( x )} \,\, dv_{g} ( x ) \leq \lim_{n \rightarrow + \infty} \int_{M} \mu ( x ) \, |\, \nabla u_{n}^{-} ( x )\,|^{q( x )} \,\, dv_{g} ( x ).$$
By \eqref{4.3} we get
$$ \int_{M} |\, u_{0}^{-} ( x )\,|^{p( x )} \,\, dv_{g} ( x ) \leq \lim_{n \rightarrow + \infty} \int_{M} |\, u_{n}^{-} ( x )\,|^{p( x )} \,\, dv_{g} ( x ),$$
and
$$ \int_{M} |\, \nabla u_{0}^{-} ( x )\,|^{p( x )} \,\, dv_{g} ( x ) \leq \lim_{n \rightarrow + \infty} \int_{M} |\, \nabla u_{n}^{-} ( x )\,|^{p( x )} \,\, dv_{g} ( x ).$$
Then, according the above inequalities and $( f_{1} )$, we obtain
\begin{align*}
J_{\lambda} ( t\, u_{0}^{-} ) & \leq \frac{t^{p^{+}}}{p^{-}} \int_{M} |\, \nabla u_{0}^{-} ( x )\,|^{p( x )} \,\, dv_{g} ( x ) + \frac{t^{q^{+}}}{q^{+}} \int_{M} \mu ( x )\,|\, \nabla u_{0}^{-} ( x )\,|^{q( x )} \,\, dv_{g} ( x ) \\& \hspace*{0.3cm} - \frac{\lambda \, t^{q^{-}}}{q^{+}} \int_{M} |\, u_{0}^{-} ( x ) \,|^{q( x )} \,\, dv_{g} ( x ) + \frac{t^{p^{+}}}{p^{+}} \int_{M} |\, u_{0}^{-} ( x )\,|^{p( x )} \,\, dv_{g} ( x )\\& \hspace*{0.3cm} -\int_{M} F( x, t\, u_{0}^{-} ( x ) ) \,\, dv_{g} ( x ) \\&  \leq \lim_{n \rightarrow + \infty} \bigg[ \, \frac{t^{p^{+}}}{q^{-}} \int_{M} |\, \nabla u_{n}^{-} ( x ) \,|^{p( x )} \,\, dv_{g} ( x ) \\& \hspace*{0.3cm} + \frac{t^{q^{+}}}{q^{+}} \int_{M} \mu ( x ) \, |\, \nabla u_{n}^{-} ( x ) \,|^{q( x )} \,\, dv_{g} ( x ) - \frac{\lambda \, t^{q^{-}}}{q^{+}} \int_{M} |\, u_{n}^{-} ( x ) \,|^{q( x )} \,\, dv_{g} ( x ) \\& \hspace*{0.3cm}  + \frac{t^{p^{+}}}{p^{+}} \int_{M} |\, u_{n}^{-} ( x )\,|^{p( x )} \,\, dv_{g} ( x ) -\int_{M} F( x, t\, u_{0}^{-} ( x ) ) \,\, dv_{g} ( x ) \, \bigg] \\& \leq \lim_{n \rightarrow + \infty} J_{\lambda} ( t \, u_{n}^{-} ) < \lim_{n \rightarrow + \infty} J_{\lambda} ( u_{n}^{-} ) = \inf_{u \,\in \,\mathcal{N}_{\lambda}^{-}} J ( u ) = \theta_{\lambda}^{-}.
\end{align*}
Hence, $J_{\lambda} ( t \, u_{0}^{-} ) < \inf_{u \, \in \, \mathcal{N}_{\lambda}^{-}} J_{\lambda} ( u ) = \theta_{\lambda}^{-},$ which is a contradiction. Consequently $$ u_{n}^{-} \longrightarrow u_{0}^{-} \,\, \mbox{in} \,\, W^{1, q( x )}_{0} ( M ) \,\, \mbox{and} \,\, \lim_{n \rightarrow +\infty} J_{\lambda} ( u_{n}^{-} ) = J_{\lambda} ( u_{0}^{-} ) = \theta_{\lambda}^{-}. $$
Then, we conclude that $u_{0}^{-}$ is a minimizer of $J_{\lambda} $ on $ \mathcal{N}_{\lambda}^{-}.$
\end{proof}
\begin{theorem}\label{theorem4.3}
Under assumptions $( f_{1} ) - ( f_{3} )$ we assume that the smooth complete compact Riemannian n-manifold $( M, \, g )$ has property $B_{vol} ( \lambda, \, v )$. Then, there exists $ \bar{\lambda} $ such that for all $ \lambda \in ( 0, \, \bar{\lambda} ),$ the problem $( \mathcal{P} )$ has at least two non-negative weak solutions.
\end{theorem}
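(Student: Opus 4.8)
The plan is to combine the two minimizer results already established. Put $\bar\lambda=\min\{\lambda^*,\lambda^{**}\}$. For $\lambda\in(0,\bar\lambda)$, Lemma \ref{lem 3.3} gives $\mathcal{N}_\lambda^0=\emptyset$, so $\mathcal{N}_\lambda=\mathcal{N}_\lambda^+\cup\mathcal{N}_\lambda^-$ with $\mathcal{N}_\lambda^+\cap\mathcal{N}_\lambda^-=\emptyset$, and Theorems \ref{theorem 4.1} and \ref{theorem4.2} furnish $u_0^+\in\mathcal{N}_\lambda^+$ and $u_0^-\in\mathcal{N}_\lambda^-$ with $J_\lambda(u_0^+)=\theta_\lambda^+$ and $J_\lambda(u_0^-)=\theta_\lambda^-$. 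By Lemmas \ref{lem3.5} and \ref{lem3.6}, $\theta_\lambda^+<0<\theta_\lambda^-$, so $u_0^+\neq u_0^-$, and both are non-trivial because $\mathcal{N}_\lambda$ excludes $0$. It remains to show that $u_0^\pm$ are weak solutions of $(\mathcal{P})$ and that they may be taken non-negative.

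The main step is to pass from constrained minimizers on the Nehari set to free critical points of $J_\lambda$ on $W_0^{1,q(x)}(M)$. I would invoke the Lagrange multiplier rule on the $C^1$ constraint manifold $\mathcal{N}_\lambda=\{u\neq0:\psi_\lambda(u)=0\}$, $\psi_\lambda(u)=\langle J_\lambda'(u),u\rangle$: since $u_0^\pm$ minimizes $J_\lambda$ over $\mathcal{N}_\lambda^\pm$, there is $\sigma\in\mathbb{R}$ with $J_\lambda'(u_0^\pm)=\sigma\,\psi_\lambda'(u_0^\pm)$; pairing with $u_0^\pm$ and using $u_0^\pm\in\mathcal{N}_\lambda$ yields $0=\langle J_\lambda'(u_0^\pm),u_0^\pm\rangle=\sigma\,\langle\psi_\lambda'(u_0^\pm),u_0^\pm\rangle$, and since $\mathcal{N}_\lambda^0=\emptyset$ we have $\langle\psi_\lambda'(u_0^\pm),u_0^\pm\rangle\neq0$, hence $\sigma=0$ and $J_\lambda'(u_0^\pm)=0$ --- which is exactly the weak formulation of Definition \ref{def 3.1}. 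Making this rigorous requires $J_\lambda,\psi_\lambda\in C^1(W_0^{1,q(x)}(M),\mathbb{R})$, which rests on the Hölder inequality, on the continuous and compact embeddings $W^{1,q(x)}(M)\hookrightarrow L^{p(x)}(M)$ and $W^{1,q(x)}(M)\hookrightarrow L^{q(x)}_{\mu(x)}(M)$ of Theorems \ref{theo1} and \ref{theoremK}, on the growth conditions $(f_1)$--$(f_3)$, and on Propositions \ref{prop61} and \ref{prop7}; this is precisely where the hypothesis $B_{vol}(\lambda,v)$ on $(M,g)$ is used, and --- as stressed in the introduction --- the non-homogeneity of the $p(x)$- and $q(x)$-operators makes this the delicate point.

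For the sign of the solutions I would make the usual reduction: replace $f(x,t)$, $|t|^{q(x)-2}t$, $|t|^{p(x)-2}t$ in $(\mathcal{P})$ by $f(x,t^+)$, $(t^+)^{q(x)-1}$, $(t^+)^{p(x)-1}$, where $t^+=\max\{t,0\}$; one checks that $(f_1)$--$(f_3)$ are preserved and that the whole construction above carries over to the modified functional with only routine changes (the zeroth-order terms only shrink in absolute value, so all the inequalities of Sections \ref{section3} and \ref{section4} survive). Then, testing the weak identity satisfied by $u_0^\pm$ with $\varphi=-\min\{u_0^\pm,0\}\ge0$, each zeroth-order term vanishes identically (on $\{u_0^\pm\ge0\}$ because $\varphi=0$, on $\{u_0^\pm<0\}$ because $(u_0^\pm)^+=0$ and $f(x,0)=0$), leaving $\int_M\big(|\nabla\min\{u_0^\pm,0\}|^{p(x)}+\mu(x)\,|\nabla\min\{u_0^\pm,0\}|^{q(x)}\big)\,dv_g=0$; hence $\nabla\min\{u_0^\pm,0\}=0$, and the Poincaré inequality (valid since $u_0^\pm\in W_0^{1,q(x)}(M)$) forces $\min\{u_0^\pm,0\}=0$, i.e. $u_0^\pm\ge0$ a.e. on $M$. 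Thus $u_0^+$ and $u_0^-$ are two distinct non-negative non-trivial weak solutions of $(\mathcal{P})$ for every $\lambda\in(0,\bar\lambda)$ with $\bar\lambda=\min\{\lambda^*,\lambda^{**}\}$, which is the assertion. The anticipated main obstacle is exactly the $C^1$-regularity of $J_\lambda,\psi_\lambda$ together with the vanishing of the multiplier, both of which lean on $\mathcal{N}_\lambda^0=\emptyset$ and on the embedding and Hölder estimates of Section \ref{section 2}.
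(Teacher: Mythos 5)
Your proposal is correct and follows the same overall strategy as the paper: set $\bar{\lambda}=\min\{\lambda^{*},\lambda^{**}\}$, invoke Theorems \ref{theorem 4.1} and \ref{theorem4.2} for the two minimizers, use Lemma \ref{lem 3.3} (together with Lemmas \ref{lem3.5} and \ref{lem3.6}) to see they are distinct and non-trivial, and then truncate to obtain non-negativity. The one substantive difference is the passage from constrained minimizers to weak solutions: the paper simply asserts that $u_{0}^{\pm}$ solve $(\mathcal{P})$ (and its concluding remark repeats the assertion without argument), whereas you supply the standard natural-constraint justification --- a multiplier $\sigma$ with $J_{\lambda}'(u_{0}^{\pm})=\sigma\,\psi_{\lambda}'(u_{0}^{\pm})$, pairing with $u_{0}^{\pm}$, and $\langle \psi_{\lambda}'(u_{0}^{\pm}),u_{0}^{\pm}\rangle\neq 0$ (from $\mathcal{N}_{\lambda}^{0}=\emptyset$) forcing $\sigma=0$. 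That is precisely the step the paper leaves out, so your version is the more complete one; be aware only that the paper's introduction disclaims reliance on the Lagrange multiplier theorem for these non-homogeneous operators, yet offers no substitute, so some care in verifying that $\mathcal{N}_{\lambda}^{\pm}$ are relatively open in the $C^{1}$ manifold $\mathcal{N}_{\lambda}$ and that $\psi_{\lambda}'(u_{0}^{\pm})\neq 0$ is warranted. For non-negativity the two arguments are the same truncation idea: the paper tests $\langle (J_{\lambda}^{+})'(u_{-}),u_{-}\rangle$ and concludes $\|u_{-}\|=0$ from coercivity of the principal part, while you test the truncated weak identity with $-\min\{u_{0}^{\pm},0\}$ and finish with the Poincar\'e inequality; these are equivalent, and yours is stated more carefully (the paper does not explain why $\langle (J_{\lambda}^{+})'(u_{-}),u_{-}\rangle=0$).
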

\begin{proof}
Form Theorems \ref{theorem 4.1} and \ref{theorem4.2}, we deduce that for any $ \lambda \in ( 0, \bar{\lambda} ), $ there exist $ u_{0}^{+} \in \mathcal{N}_{\lambda}^{+}$ and $ u_{0}^{-} \in \mathcal{N}_{\lambda}^{-}$ such as
$$ J_{\lambda} ( u_{0}^{+} ) = \inf_{u \, \in \, \mathcal{N}_{\lambda}^{+}} J_{\lambda} ( u ) \,\, \mbox{and} \,\, J_{\lambda} ( u_{0}^{-} ) = \inf_{u \, \in \, \mathcal{N}_{\lambda}^{-}} J_{\lambda} ( u ).$$
Then, the problem $ ( \mathcal{P} )$ has two solutions $ u_{0}^{+} \in \mathcal{N}_{\lambda}^{+}$ and $ u_{0}^{-} \in \mathcal{N}_{\lambda}^{-}$ in $ W_{0}^{1, q( x )} ( M ).$ By Lemma \ref{lem 3.3}, it follows that $ \mathcal{N}_{\lambda}^{-} \cap \mathcal{N}_{\lambda}^{+} = \emptyset.$ Then, $ u_{0}^{-} \neq u_{0}^{+}.$ Thus these two solutions are distinct.\\
Next, we prove that $u_{0}^{-}$ and $u_{0}^{+}$ are non-negative in $M$. For this, we introduce the truncation function $ h_{+} : M \times \mathbb{R} \longrightarrow \mathbb{R}$ defined by
$$ h_{+} ( x, s ) = \begin{cases}
0 & \text{ if $s < 0$ }, \\[0.3cm]
h( x, s ) & \text{if $ s \geq 0$ }.
\end{cases}$$
We set $ H_{+} ( x, s ) = \displaystyle\int_{0}^{s} f( x, t ) \,\, dt$ and consider the $ C^{1}-$functional\\ $ J_{\lambda}^{+} : W_{0}^{1, q( x )} ( M ) \longrightarrow \mathbb{R}$ given by
\begin{align*}
J_{\lambda}^{+} ( u ) =&  \int_{M} \frac{1}{p( x )} \, |\, \nabla u( x )\,|^{p( x )} \,\, dv_{g} ( x ) +  \int_{M} \frac{\mu ( x )}{q( x )} \, |\, \nabla u( x )\,|^{q( x )} \,\, dv_{g} ( x ) \\&- \int_{M} H_{+} ( x, u( x ) ) \,\, dv_{g} ( x ).
\end{align*}
Then, by \eqref{Said} and proposition \ref{prop13} we have for all $ u_{-} = \min \, \{\, 0, \, u( x ) \, \} $ that
\begin{align*}
0 = \langle ( J_{\lambda}^{+} )^{'} ( u_{-} ), \, u_{-} \rangle \, & \geq p^{-} \, \rho_{p( . )} ( |\, \nabla u_{-} \,| ) + \frac{\mu_{0}}{D^{p^{+}} ( c + 1 )^{p^{+}}} \, q^{-} \, \rho_{p( . )} ( u_{-} ) \\& \geq \rho_{p( . )} ( u_{-} ) \geq ||\, u_{-} \,||^{p^{-}}.
\end{align*}
Hence, $||\, u_{-} \,|| = 0,$ and thus $u = u_{+}$. Then, by taking $u = u_{0}^{-}$ and $u = u_{0}^{+}$ respectively, we deduce that $u_{0}^{-}$ and $ u_{0}^{+}$ are non-negative solutions of problem $(  \mathcal{P} )$.
\end{proof}
\textbf{Conclusion:} According to the above results, we can then say that $u^{\pm} $ are critical points of $J_{\lambda} $ and hence are non-negative weak solutions of problem $ ( \mathcal{P} ).$
 
\section*{Acknowledgements}
This paper has been supported by the RUDN University Strategic Academic Leadership Program and P.R.I.N. 2019.


%

%


%
%


\begin{thebibliography}{}
%
%
\bibitem{aberqi2019existence}
Aberqi, A and Bennouna, J and Elmassoudi, M and Hammoumi, M.
\textit{Existence and uniqueness of a renormalized solution of parabolic problems in Orlicz spaces}, Monatshefte f{\"u}r Mathematik. 189, (2019) 195--219.

\bibitem{aberqi2017nonlinear}
Aberqi, A and Bennouna, J and Mekkour, M and Redwane, H.
\textit{Nonlinear parabolic inequalities with lower order terms}, Applicable Analysis. 96, (2017) 2102--2117.

\bibitem{aubin1982nonlinear}
Aubin, Th.
\textit{Nonlinear analysis on manifolds. Monge-Ampere equations}, Springer Science \& Business Media. 252, (1982).

\bibitem{benslimane2020existence}
Benslimane, O. and Aberqi, A. and Bennouna, J.
\textit{The Existence and Uniqueness of an Entropy Solution to Unilateral Orlicz Anisotropic Equations in an Unbounded Domain}, Axioms. 9, (2020) 109.

\bibitem{A}
Benslimane, O. and Aberqi, A. and Bennouna, J.
\textit{Existence and uniqueness of entropy solution of a nonlinear elliptic equation in anisotropic Sobolev–Orlicz space}, Rend. Circ. Mat. Palermo, II. Ser. (2020)  https://doi.org/10.1007/s12215-020-00577-4.

\bibitem{benslimane2021some}
Benslimane, O. and Aberqi, A. and Bennouna, J.
\textit{On some nonlinear anisotropic elliptic equations in anisotropic Orlicz space}, Arab Journal of Mathematical Sciences. (2021)  https://doi.org/10.1108/AJMS-12-2020-0133.

\bibitem{benslimane2020existence1}
Benslimane, O. and Aberqi, A. and Bennouna, J.
\textit{Existence and Uniqueness of Weak solution of $ p (x) $-laplacian in Sobolev spaces with variable exponents in complete manifolds}, arXiv preprint arXiv: 2006.04763, (2020)

\bibitem{boccardo1993nonlinear}
Boccardo, L. and Gallou{\"e}t, Th. and Vazquez, J. L.
\textit{Nonlinear elliptic equations in $\, \mathbb{R}^{N} \,$ without growth restrictions on the data}, Journal of Differential Equations. 105, (1993) 334--363.

\bibitem{boccardo1992nonlinear}
Boccardo, L and Gallou{\"e}t, Th.
\textit{Nonlinear elliptic equations with right hand side measures}, Communications in Partial Differential Equations. 17, (1992) 189--258.

\bibitem{cencelj2018double}
Cencelj, M. and R{\u{a}}dulescu, V. D. and Repov{\v{s}}, D. D.
\textit{Double phase problems with variable growth}, Nonlinear Analysis. 177, (2018) 270--287.

\bibitem{fan2001compact}
Fan, X. and Zhao, Y. and Zhao, D.
\textit{Compact imbedding theorems with symmetry of Strauss--Lions type for the space $ W^{1, p (x)} (\Omega)$}, Journal of Mathematical Analysis and Applications. 255, (2001) 333--348.

\bibitem{fan2001spaces}
Fan, X. and Zhao, D.
\textit{On the spaces $L^{p (x)} (\Omega)$ and $W^{m, p (x)} (\Omega)$}, Journal of mathematical analysis and applications. 263, (2001) 424--446.

\bibitem{gaczkowski2016sobolev}
Gaczkowski, M. and G{\'o}rka, P. and Pons, D. J.
\textit{Sobolev spaces with variable exponents on complete manifolds}, Journal of Functional Analysis. 270, (2016) 1379--1415.

\bibitem{gasinski2019constant}
Gasi{\'n}ski, L. and Papageorgiou, N. S.
\textit{Constant sign and nodal solutions for superlinear double phase problems}, Advances in Calculus of Variations. 1, (2019).

\bibitem{guo2015dirichlet}
Guo, L.
\textit{The Dirichlrt Problems For Nonlinear Elliptic Equations With Variable Exponents On Riemannian Manifolds}, Journal Of Applied Analysis And Computation. 5, (2015) 562--569.

\bibitem{hebey2000nonlinear}
Hebey, E.
\textit{Nonlinear Analysis on Manifolds: Sobolev Spaces and Inequalities: Sobolev Spaces and Inequalities}, American Mathematical Soc. 5, (2000).

\bibitem{liu2018existence}
Liu, W. and Dai, G.
\textit{Existence and multiplicity results for double phase problem}, Journal of Differential Equations. 265, (2018) 4311--4334.

\bibitem{marino2020existence}
Marino, G. and Winkert, P.
\textit{Existence and uniqueness of elliptic systems with double phase operators and convection terms}, Journal of Mathematical Analysis and Applications. 492, (2020) 124423.

\bibitem{papageorgiou2020positive}
Papageorgiou, N. S. and Repov{\v{s}}, D. D. and Vetro, C.
\textit{Positive solutions for singular double phase problems}, Journal of Mathematical Analysis and Applications. (2020) 123896.

\bibitem{radulescu2015partial}
Radulescu, V. D and Repovs, D. D.
\textit{Partial differential equations with variable exponents: variational methods and qualitative analysis}, CRC press. 9, (2015).

\bibitem{ragusa2019regularity}
Ragusa, M. A. and Tachikawa, A.
\textit{Regularity for minimizers for functionals of double phase with variable exponents}, Advances in Nonlinear Analysis. 9, (2019) 710--728.

\bibitem{ragusa2005partial}
Ragusa, M. A. and Tachikawa, A.
\textit{Partial regularity of the minimizers of quadratic functionals with VMO coefficients}, Journal of the London Mathematical Society. 72, (2005) 609--620.

\bibitem{shi2020multiple}
Shi, X. and R{\u{a}}dulescu, V. D. and Repov{\v{s}}, D. D. and Zhang, Q.
\textit{Multiple solutions of double phase variational problems with variable exponent}, Advances in Calculus of Variations. 13, (2020) 385--401.

\bibitem{tachikawa2020boundary}
Tachikawa, A.
\textit{Boundary regularity of minimizers of double phase functionals}, Journal of Mathematical Analysis and Applications. (2020) 123946.

\bibitem{zhikov1987averaging}
Zhikov, V. V.
\textit{Averaging of functionals of the calculus of variations and elasticity theory}, Mathematics of the USSR-Izvestiya. 29, (1987) 33.

\bibitem{zhikov1995lavrentiev}
Zhikov, V. V.
\textit{On Lavrentiev's phenomenon}, Russian journal of mathematical physics. 3, (1995) 2.

\bibitem{zhikov1997some}
Zhikov, V. V.
\textit{On some variational problems}, Russian journal of mathematical physics. 5, (1997) 105--116.

\bibitem{jikov2012homogenization}
Zhikov, V. V. and Kozlov, S. M. and Oleinik, O. A.
\textit{Homogenization of differential operators and integral functionals}, Springer Science \& Business Media, (2012).
\end{thebibliography}


\end{document}